\documentclass{article}
\usepackage{amsmath,amssymb,amscd}
\usepackage{amsthm}
\newtheorem{thm}{\indent \sc Theorem}[section]
\newtheorem{prop}{\indent \sc Proposition}[section]
\newtheorem{cor}{\indent \sc Corollary}
\newtheorem{defi}{\indent \sc Definition}
\newtheorem{lem}{\indent \sc Lemma}[section]

\newtheorem{rem}{Remark}
\newcommand{\spec}{\operatorname{Spec}}
\newtheorem{pr}{Property}
\newtheorem{prm}{Problem}

\begin{document}
\author{Makoto Sakagaito}
\title{On Problems about a generalization of the Brauer group}
\date{}
\maketitle
\begin{center}
School of Mathematics, Harish-Chandra Research Institute \\
Allahabad (India) 211019 \\
\textit{E-mail address}: sakagaito43@gmail.com
\end{center}
\begin{abstract}
In this paper, we define a generalization of the Brauer groups by using Bloch's cycle complex
on etale site. 
We prove the Gersten conjecture of the generalized Brauer group on some cases. 
As an application we prove the Gersten conjecture of the logarithmic Hodge-Witt cohomology
for a two dimensional regular local ring which is smooth over the spectrum of some 
discrete
valuation ring of characteristic $p>0$.
Moreover we consider
a generalization of Artin's theorem on Brauer groups.
\end{abstract}
\section{Introduction}

Let $X$ be a regular scheme and 
$\operatorname{Br}(X)$ be the cohomological Brauer group 
\begin{math}
\operatorname{H}^{2}(X_{et}, \mathbb{G}_{m}).
\end{math}

Then $\operatorname{Br}(X)$ satisfies the following properties:

\begin{pr}\label{per}
\begin{description}\upshape
\item[(i)](cf. Remark \ref{pic}) Let $X$ be a regular local ring.

\item[(i-a)] Let $l$ be an integer which is invertible in $X$. Then
\begin{equation*}
\operatorname{Br}(X)_{l}=
\operatorname{Ker}\left(\operatorname{Br}(X)\xrightarrow{\times l}\operatorname{Br}(X)\right)=
\operatorname{H}^{2}_{et}(X, \mu_{l})
\end{equation*}
where $\mu_{l}$ is the sheaf of $l$-th roots of unity.

\item[(i-b)] Let $\operatorname{char}(X)=p>0$. Then
\begin{equation*}
\operatorname{Br}(X)_{p^{r}}=
\operatorname{H}^{1}_{et}(X, \nu^{n}_{r})
\end{equation*}
where $\nu^{n}_{r}$ is the logarithmic de Rham-Witt sheaf $\operatorname{W\Omega}^{n}_{r, \log}$.
\item[(ii)] (Purity) (\cite{Ga}, \cite{Sh})
Let $X$ be an equi-characteristic regular local ring and $Z\subset X$
a regular closed subscheme of codimension $\geq 2$. 
Then
\begin{equation*}
\operatorname{Br}(X)=\operatorname{Br}(X\setminus Z).
\end{equation*}
\item[(iii)](cf.\cite[p.93, Proposition (2.1)]{Gr}, \cite[Proposition 7.14]{Sak}) 
Let $X$ be a Noether regular integral scheme and $K$ the ring of rational functions on
$X$. Let $X^{(i)}$ the set of points $x$ of codimension $i$ (that is, such that 
$\operatorname{dim}\mathcal{O}_{X, x}=i$) and 
$X_{(i)}$ the set of points $x$
of dimension $i$ (that is such that $\operatorname{dim}\bar{\{x\}}=i$). Let $\kappa(x)$ be the residue field of 
$x\in X$ and 
$K_{\bar{x}}$ the field of
fractions of a strictly Henselian $\mathcal{O}_{X, \bar{x}}$.
Then there is an exact sequence
\begin{equation*}
0\to \operatorname{Br}(X)\to \operatorname{Ker}\left(\operatorname{Br}(K)\xrightarrow{\operatorname{Res}}
\prod_{x\in X_{(0)}}\operatorname{Br}(K_{\bar{x}})\right)
\to\bigoplus_{x\in X^{(1)}}\operatorname{H}^{1}\left(\kappa(x), \mathbb{Q}/\mathbb{Z}\right).
\end{equation*}
\end{description}
\end{pr}
\vspace{0.5mm}
We can define a group $\operatorname{H}^{i}_{\operatorname{B}}(X)$ 
for an equi-dimensional scheme $X$ by using etale motivic cohomology (see $\S 3$).

\vspace{2.0mm}

The group $\operatorname{H}^{i}_{\operatorname{B}}(X)$ satisfies 
\begin{equation*}
\operatorname{H}^{1}_{\operatorname{B}}(X)=\operatorname{H}^{1}(X_{et}, \mathbb{Q}/\mathbb{Z})
~~\textrm{and}~~
\operatorname{H}^{2}_{\mathrm{B}}(X)=\operatorname{Br}(X)
\end{equation*}
for an essentially smooth scheme over a Dedekind ring $X$. 

\vspace{2.0mm}

We can consider the following problems:
\begin{prm}\upshape
\label{prob}
\begin{description}
\item[(i)]  Let $X$ be a regular local ring. 

\item[(i-a)] Let $l$ be an integer which is prime to $\operatorname{char}(X)$. Then
\begin{equation*}
\operatorname{H}^{i}_{\mathrm{B}}(X)_{l}=\operatorname{H}^{i}_{et}(X, \mu^{\otimes (i-1)}_{l}).
\end{equation*}
\item[(i-b)] Let $\operatorname{char}(X)=p>0$. Then
\begin{equation*}
\operatorname{H}^{i}_{\mathrm{B}}(X)_{p^{r}}=\operatorname{H}^{1}_{et}(X, \nu^{i-1}_{r}).
\end{equation*}
\item[(ii)] Let $X$ be a regular local ring and $Z\subset X$ a regular closed subscheme of codimension $\geq 2$.
Then 
\begin{equation*}
\operatorname{H}^{n}_{\mathrm{B}}(X)
=\operatorname{H}^{n}_{\mathrm{B}}(X \setminus Z).
\end{equation*}
\item[(iii)] Let notations be same as Property \ref{per} (iii). Then the sequence
\begin{equation*}
0\to\operatorname{H}_{\operatorname{B}}^{i}(X)\to
\operatorname{Ker}\left(\operatorname{H}_{\operatorname{B}}^{i}(K)\xrightarrow{\operatorname{Res}}\prod_{x\in X_{(0)}}
\operatorname{H}_{\operatorname{B}}^{i}(K_{\bar{x}})\right)
\to \bigoplus_{x\in X^{(1)}}\operatorname{H}_{\operatorname{B}}^{i-1}(\kappa(x))
\end{equation*}
is exact.
\end{description}
\end{prm}

We prove the following results in this paper.
\begin{thm}\upshape
\label{Res1}
\begin{description}
\item[(i)] (Proposition \ref{leD}) Let $X$ be a local ring at a point $x$ of an essentially smooth
scheme over a Dedekind ring.

Then Problem \ref{prob} (i) is true for $X$.
\item[(ii)] (Proposition \ref{supp}) Let $X$ be a regular integral scheme with $\operatorname{dim}(X)=1$. 

Then Problem \ref{prob} (iii) is true for $X$.
\item[(iii)] (Theorem \ref{CDVRS}) Let $\mathcal{O}_{K}$ be a Henselian discrete valuation ring.
Let $\mathfrak{X}$ be an integral regular scheme and proper and flat over $\spec\mathcal{O}_{K}$ with 
$\operatorname{dim}(\mathfrak{X})=2$. 

Then Problem \ref{prob} (iii) 
is true for $\mathfrak{X}$.
\end{description}
\end{thm}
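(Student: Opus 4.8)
The plan is to reduce the statement for the arithmetic surface $\mathfrak{X}$ to the one-dimensional case already established in Theorem \ref{Res1}(ii) (Proposition \ref{supp}), applied to the localisations of $\mathfrak{X}$ at its codimension-one points and, crucially, to the punctured spectra of the two-dimensional regular local rings of $\mathfrak{X}$ at its closed points; the pieces are assembled by the coniveau spectral sequence for $\operatorname{H}^{\bullet}_{\operatorname{B}}$ on $\mathfrak{X}$, and the hypothesis that $\mathcal{O}_{K}$ is Henselian is used to place all closed points of $\mathfrak{X}$ on the special fibre. As a first reduction: $\operatorname{Br}$, and more generally $\operatorname{H}^{i}_{\operatorname{B}}$ of a regular scheme or of a field, is a torsion group, and the sequence of Problem \ref{prob}(iii) respects the $\ell$-primary decomposition, so one may fix a prime $\ell$. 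By the available cases of Problem \ref{prob}(i) (Theorem \ref{Res1}(i)), together with Gabber's absolute purity in the prime-to-$p$ case, the $\ell$-primary part of $\operatorname{H}^{\bullet}_{\operatorname{B}}$ of each local ring of $\mathfrak{X}$ and each residue field $\kappa(x)$ is then étale cohomology of $\mu^{\otimes(i-1)}_{\ell^{r}}$ when $\ell$ is invertible on $\mathfrak{X}$, and $\operatorname{H}^{1}_{et}(-,\nu^{i-1}_{r})$ when $\ell=p$ is the residue characteristic and $\mathcal{O}_{K}$ is equicharacteristic; the mixed-characteristic $p$-primary case is isolated and treated separately.

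The coniveau spectral sequence converging to $\operatorname{H}^{i}_{\operatorname{B}}(\mathfrak{X})$ has, since $\dim\mathfrak{X}=2$, only three columns, indexed by the generic point, by the codimension-one points, and by the closed points of $\mathfrak{X}$. The first two columns are governed by the Gersten conjecture for fields and for discrete valuation rings --- which is exactly the content of the one-dimensional theory and is therefore available --- and they produce $\operatorname{H}^{i}_{\operatorname{B}}(K)$ together with the total residue map to $\bigoplus_{x\in\mathfrak{X}^{(1)}}\operatorname{H}^{i-1}_{\operatorname{B}}(\kappa(x))$; thus $\operatorname{H}^{i}_{\operatorname{B}}(\mathfrak{X})$ fits into an exact sequence whose relevant initial segment is $\operatorname{Ker}\bigl(\operatorname{H}^{i}_{\operatorname{B}}(K)\to\bigoplus_{x\in\mathfrak{X}^{(1)}}\operatorname{H}^{i-1}_{\operatorname{B}}(\kappa(x))\bigr)$, all remaining correction terms coming from the third column, i.e. from the closed points. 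The injectivity $\operatorname{H}^{i}_{\operatorname{B}}(\mathfrak{X})\hookrightarrow\operatorname{H}^{i}_{\operatorname{B}}(K)$ providing the leftmost zero generalises the classical $\operatorname{Br}(\mathfrak{X})\hookrightarrow\operatorname{Br}(K)$ for regular integral $\mathfrak{X}$. The ``easy'' inclusion of Problem \ref{prob}(iii) then comes for free: a class coming from $\mathfrak{X}$ is unramified at every codimension-one point, being locally there in the image of the relevant discrete valuation ring, and it restricts to $0$ in each $\operatorname{H}^{i}_{\operatorname{B}}(K_{\bar{x}})$ because it factors through $\operatorname{H}^{i}_{\operatorname{B}}(\mathcal{O}_{\mathfrak{X},\bar{x}})$, which vanishes since $\mathcal{O}_{\mathfrak{X},\bar{x}}$ is strictly Henselian and the complex $\mathbb{Z}(i-1)$ is concentrated in degrees $\leq i-1$.

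It remains to analyse the third column. Since $\mathfrak{X}$ is proper over $\operatorname{Spec}\mathcal{O}_{K}$, every closed point $x$ lies on the special fibre; set $U_{x}=\operatorname{Spec}\mathcal{O}_{\mathfrak{X},\bar{x}}\setminus\{\bar{x}\}$, the punctured spectrum of the strict Henselisation. This $U_{x}$ is a regular integral scheme of dimension one with function field $K_{\bar{x}}$, to which Theorem \ref{Res1}(ii) applies verbatim, its closed points being exactly the codimension-one points of $\mathfrak{X}$ specialising to $x$. Using excision and the localisation sequence on $\operatorname{Spec}\mathcal{O}_{\mathfrak{X},\bar{x}}$ to rewrite the third-column local term at $x$ in terms of $\operatorname{H}^{\bullet}_{\operatorname{B}}(U_{x})$ --- the ambient contribution $\operatorname{H}^{i}_{\operatorname{B}}(\operatorname{Spec}\mathcal{O}_{\mathfrak{X},\bar{x}})$ being zero --- and then matching the differentials of the spectral sequence against the one-dimensional exact sequence for $U_{x}$, one finds that the net obstruction carried by the third column is precisely the failure of $\operatorname{Res}(\alpha)$ to vanish in $\prod_{x\in\mathfrak{X}_{(0)}}\operatorname{H}^{i}_{\operatorname{B}}(K_{\bar{x}})$. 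Combined with the previous paragraph, this is the asserted exact sequence of Problem \ref{prob}(iii).

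The step I expect to be the main obstacle is exactly this last analysis of the third column in the mixed-characteristic $p$-primary case: there is no absolute purity for $\operatorname{H}^{\bullet}_{\operatorname{B}}$ at a two-dimensional regular local ring of mixed characteristic --- and such local rings genuinely occur here, for instance at the singularities of the special fibre --- so the third column cannot simply be made to vanish or to collapse to residue-field cohomology. The content of the theorem is precisely that whatever this column contributes is faithfully recorded by restriction to the fields $K_{\bar{x}}$, which is why the term $\prod_{x}\operatorname{H}^{i}_{\operatorname{B}}(K_{\bar{x}})$ is forced into the statement. Making this rigorous needs the dévissage on the one-dimensional schemes $U_{x}$ via Theorem \ref{Res1}(ii) (and the purity for the logarithmic de Rham-Witt sheaves on traits on which it rests), a careful verification that the residue and restriction maps are compatible, and bookkeeping of Tate twists when passing between the prime-to-$p$ information along the generic fibre $\mathfrak{X}_{\eta}$ --- itself a regular curve on which $p$ is invertible in the mixed-characteristic case and to which Theorem \ref{Res1}(ii) applies --- and the $p$-primary information along the special fibre.
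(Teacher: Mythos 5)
Your proposal only engages part (iii) (parts (i) and (ii) are quoted as inputs, which is acceptable since the theorem is a summary of Propositions \ref{leD}, \ref{supp} and Theorem \ref{CDVRS}), but for part (iii) there is a genuine gap, and it sits exactly where you yourself flag ``the main obstacle''. In the coniveau spectral sequence the third column at a closed point $x\in\mathfrak{X}_{(0)}$ is a local cohomology term with supports in a point of codimension two in a two--dimensional regular local ring which in general is of mixed characteristic and not essentially smooth over a Dedekind ring; to identify this term, or even to control the differential into it, you need a purity/semi-purity statement for $\mathbb{Z}(n)_{et}$ (or for $\nu^{n}_{r}$) in codimension two that is not available at $p$, and your proposal offers no substitute --- the excision to the punctured spectrum $U_{x}$ and the appeal to part (ii) are stated as a programme, not carried out, and the compatibility of residues and the Tate-twist bookkeeping are deferred. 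Two auxiliary assertions are also unjustified: the injectivity $\operatorname{H}^{i}_{\operatorname{B}}(\mathfrak{X})\hookrightarrow\operatorname{H}^{i}_{\operatorname{B}}(k(\mathfrak{X}))$ is not a formal generalization of the classical Brauer-group statement and must be proved (in the paper it comes out of Proposition \ref{sur} together with the exact sequence (\ref{exactmain})); and the vanishing of $\operatorname{H}^{i}_{\operatorname{B}}$ of the strict henselization at a closed point is argued from ``$\mathbb{Z}(i-1)$ is concentrated in degrees $\leq i-1$'', which is false --- Bloch's complex $\mathbb{Z}(i-1)$ lives in cohomological degrees $\leq 2i-2$, so this degree argument only covers $i=2$.

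The paper avoids the codimension-two problem altogether, and that is the essential difference. Instead of a three-column coniveau analysis, Theorem \ref{CDVRS} uses the localization triangle for the whole closed fiber $i:Y\hookrightarrow\mathfrak{X}$ (codimension one, where purity for the motivic complex holds after truncation, Proposition \ref{tru2}), Lemma \ref{tru} to embed $\operatorname{H}^{n+2}_{et}\bigl(Y,\tau_{\leq n+1}(\mathbf{R}i^{!}\mathbb{Z}(n)[+1])\bigr)$ into $\operatorname{H}^{n+1}_{et}(Y,\mathbb{Z}(n-1))$, and, as the decisive input where properness and the Henselian hypothesis on $\mathcal{O}_{K}$ enter, Proposition \ref{sur}: the surjectivity of $\operatorname{H}^{n+1}_{Zar}(X,\mathbb{Z}(n))\to\operatorname{H}^{n}_{Zar}(Y,\mathbb{Z}(n-1))$, proved integrally via the coniveau filtration on the cycle complex, the identification of the boundary with symbol and norm maps on Milnor $K$-theory, and Saito's lemma producing, for each closed point of $Y$, a horizontal point whose closure is the spectrum of a Henselian discrete valuation ring. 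This gives $\operatorname{H}^{n+2}_{Zar}(\mathfrak{X},\mathbb{Z}(n))=0$ (Corollary \ref{vm}) and the injectivity of the first map of (\ref{exactmain}); the sequence is then closed up by applying Proposition \ref{supp} to the special fiber (after resolution of singularities of curves, with $Y^{(0)}\subset\mathfrak{X}^{(1)}$) and to the generic data. Note also that the paper works with integral motivic coefficients throughout, so no $\ell$-primary decomposition, absolute purity for $\mu_{\ell}$, or separate treatment of the $p$-part is needed at this stage. To repair your argument you would either have to prove the missing codimension-two statement at mixed-characteristic points or restructure the dévissage along the special fiber as the paper does.
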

\begin{thm}\upshape
Let $B$ be a regular local ring with $\operatorname{dim}(B)\leq 1$.
Let $A$ be a regular local ring with $\operatorname{dim}(A)=2$ and
a localization of finite type over $B$.
\begin{description}
\item[(i)] (Proposition \ref{mixGS})
Suppose that $A$ is a mixed-characteristic and $(l, \operatorname{char}(A))=1$.
Then the sequence
\begin{equation*}
\operatorname{H}^{n+1}\left(
A_{et}, \mu_{l}^{\otimes n}
\right)\to
\operatorname{H}^{n+1}\left(
k(A)_{et}, \mu_{l}^{\otimes n}
\right)\to
\bigoplus_{\mathfrak{p}\in \spec A^{(1)}}
\operatorname{H}^{n}\left(
\kappa(\mathfrak{p})_{et}, \mu_{l}^{\otimes n-1}
\right)
\end{equation*}
is exact.
\item[(ii)] (Theorem \ref{DWE})
Suppose that $B$ 
is a discrete valuation ring which is an essentially of finite type over a field 
and $\operatorname{char}(B)=p>0$.
Suppose that $A$ is smooth over $B$.
Then the sequence
\begin{align*}
0\to
&\operatorname{H}^{1}(A_{et}, \nu_{r}^{n})\to
\operatorname{Ker}\Bigl(
\operatorname{H}^{1}(k(A)_{et}, \nu_{r}^{n})\to
\prod_{\mathfrak{p}\in \spec A^{(1)}}\operatorname{H}^{1}\left(
k(A_{\bar{\mathfrak{p}}})_{et}, \nu_{r}^{n}
\right)
\Bigr)\\
\to &
\bigoplus_{\mathfrak{p}\in \spec A^{(1)}}
\operatorname{H}^{1}(\kappa(\mathfrak{p})_{et}, \nu_{r}^{n-1})
\end{align*}
is exact. 
\end{description}
\end{thm}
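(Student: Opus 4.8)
The plan is to deduce part (ii) from part (i) together with the logarithmic de Rham–Witt comparison in Problem \ref{prob} (i-b), following the standard strategy of reducing the characteristic-$p$ Gersten-type statement to its $\ell$-adic (here, mixed-characteristic $\mu_\ell^{\otimes\bullet}$) counterpart via a Bloch–Kato–Gabber type identification and a Bockstein/coniveau argument. Since $A$ is smooth over a discrete valuation ring $B$ with $\operatorname{char}(B)=p>0$, the ring $A$ is equicharacteristic $p$, two-dimensional, regular local; by Property \ref{per} (i-b) and Theorem \ref{Res1} (i) (i.e.\ Problem \ref{prob} (i-b) applied to $A$ and to each residue field $\kappa(\mathfrak p)$), each of the groups $\operatorname{H}^1_{et}(-,\nu_r^j)$ occurring in the sequence is identified with the $p^r$-torsion of a motivic group $\operatorname{H}^{j+1}_{\mathrm B}(-)$, so it suffices to establish the corresponding exact sequence at the level of $\operatorname{H}^\bullet_{\mathrm B}(-)_{p^r}$.

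Concretely, I would set up the coniveau (niveau) spectral sequence for the étale motivic complex $\mathbb Z(n)$ on $\spec A$, whose $E_1$-page in the relevant row reads
\begin{equation*}
0\to\operatorname{H}^{n+1}_{\mathrm B}(A)\to\operatorname{H}^{n+1}_{\mathrm B}(k(A))\to\bigoplus_{\mathfrak p\in\spec A^{(1)}}\operatorname{H}^{n}_{\mathrm B}(\kappa(\mathfrak p))\to\bigoplus_{\mathfrak m}\operatorname{H}^{n-1}_{\mathrm B}(\kappa(\mathfrak m)),
\end{equation*}
the point being that the arithmetic resolution / Gersten complex is exact except possibly at the first two spots, and exactness there is precisely the injectivity and the purity one must prove. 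First I would prove the injectivity of $\operatorname{H}^1(A_{et},\nu_r^n)\to\operatorname{H}^1(k(A)_{et},\nu_r^n)$ (equivalently $\operatorname{H}^{n+1}_{\mathrm B}(A)_{p^r}\hookrightarrow\operatorname{H}^{n+1}_{\mathrm B}(k(A))_{p^r}$): this reduces, after passing to the henselization and using that $A$ is a two-dimensional regular local ring smooth over a DVR, to a standard weak-purity argument, and it can also be extracted from the mixed-characteristic case (i) by a lifting/approximation argument once one knows the $\nu_r$-cohomology is detected after inverting nothing but controlled by the $\mu_{p^r}$-analogue on a lift — but in equicharacteristic $p$ it is cleaner to argue directly via the Bloch–Gabber–Kato description of $\nu_r^n$ and the injectivity of symbols into the function field. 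Then the heart is the exactness in the middle: an element of $\operatorname{H}^1(k(A)_{et},\nu_r^n)$ whose residue at every height-one prime vanishes, and which dies in every $\operatorname{H}^1(k(A_{\bar{\mathfrak p}})_{et},\nu_r^n)$, must come from $\operatorname{H}^1(A_{et},\nu_r^n)$.

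For that middle exactness I would mimic the proof of part (i) verbatim in the structure, replacing $\mu_\ell^{\otimes\bullet}$ by $\nu_r^\bullet$ throughout: use the two-dimensionality to write $A$ as essentially of finite type over the DVR $B$, pick a regular system of parameters, and run the "transfer + specialization" argument comparing the generic fibre over $B$ with the special fibre; the hypothesis that $B$ is essentially of finite type over a field of characteristic $p$ is what lets one invoke the Gersten resolution for logarithmic de Rham–Witt sheaves on smooth schemes over a field (Gros–Suwa / Shiho) for the fibres, and the smoothness of $A/B$ is what guarantees that the residue maps are compatible with the ones appearing in that resolution. The main obstacle I anticipate is exactly this compatibility across the mixed situation "local ring smooth over a DVR": one must check that the boundary map $\operatorname{H}^1(k(A)_{et},\nu_r^n)\to\bigoplus_{\mathfrak p}\operatorname{H}^1(\kappa(\mathfrak p)_{et},\nu_r^{n-1})$ defined via the motivic coniveau filtration agrees with the classical residue on logarithmic forms, and that the condition "vanishing in $\operatorname{H}^1(k(A_{\bar{\mathfrak p}})_{et},\nu_r^n)$ for all $\mathfrak p$" correctly cuts out the image of $\operatorname{H}^1(A_{et},\nu_r^n)$ rather than that of its henselization — this is the analogue of Property \ref{per} (iii)'s kernel term and is where the discrete-valuation-ring base, as opposed to a general one-dimensional base, is genuinely used. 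Once these compatibilities are in place, the exact sequence falls out of the coniveau spectral sequence together with Theorem \ref{Res1} (i) applied to $A$ and to its residue fields.
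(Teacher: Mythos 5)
Your proposal matches the paper only in its first step (identifying each $\operatorname{H}^{1}_{et}(-,\nu_{r}^{\bullet})$ with the $p^{r}$-torsion of an \'etale motivic group via Lemma \ref{LDW}); after that there are genuine gaps. First, the statement you were asked to prove includes part (i), which you assume rather than prove (in the paper it comes from Gabber's absolute purity applied to the complement of a regular divisor $\spec A/(f)$, combined with the truncation triangle of Proposition \ref{tru2} and the surjectivity used in Proposition \ref{rl2}). Second, your argument for (ii) does not engage with the actual shape of the sequence: the middle term is not $\operatorname{H}^{1}(k(A)_{et},\nu_{r}^{n})$ but the kernel of restriction to $\prod_{\mathfrak p}\operatorname{H}^{1}(k(A_{\bar{\mathfrak p}})_{et},\nu_{r}^{n})$. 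This term is not produced by the coniveau/niveau spectral sequence whose $E_{1}$-row you write down, and for \emph{\'etale} cohomology the claim that the Gersten complex is ``exact except possibly at the first two spots'' is unsubstantiated --- its failure is precisely why the kernel term appears in Problem \ref{prob} (iii) at all. In the paper this term arises from $\tau_{\leq n+1}\mathbf{R}j_{*}\mathbb{Z}(n)_{et}$ and the computation of the stalks of $\mathbf{R}^{n+2}j_{*}$ (Lemma \ref{str}), already packaged in the one-dimensional case (Proposition \ref{supp}); your sketch has no mechanism that produces it, and you yourself flag the issue (``correctly cuts out the image'') without resolving it.

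The missing idea is the reduction from dimension two to dimension one by excising the closed point. With $i:\spec A/\mathfrak m\to\spec A$ and $j:U\to \spec A$ the punctured spectrum, the paper identifies $\operatorname{H}^{n+2}_{\spec A/\mathfrak m}((\spec A)_{et},\mathbb{Z}/p^{r}(n))$ with $\operatorname{K}^{M}_{n-2}(A/\mathfrak m)/p^{r}$ via the Gysin map (Proposition \ref{Gysin}, resting on the logarithmic de Rham--Witt purity of Gros--Shiho--Suwa and Bloch--Gabber--Kato), and $\operatorname{H}^{n+2}_{\spec A/\mathfrak m}((\spec A)_{et},\mathbb{Z}(n))$ with $\operatorname{K}^{M}_{n-2}(A/\mathfrak m)$ (Nesterenko--Suslin plus the purity quasi-isomorphism of Proposition \ref{tru2}); since $\operatorname{K}^{M}_{n-2}(A/\mathfrak m)\to\operatorname{K}^{M}_{n-2}(A/\mathfrak m)/p^{r}$ is surjective, the $p^{r}$-torsion of $\operatorname{H}^{n+3}_{\spec A/\mathfrak m}(\cdot,\mathbb{Z}(n))$ vanishes, so $\operatorname{H}^{n+2}_{et}(A,\mathbb{Z}(n))_{p^{r}}\xrightarrow{\sim}\operatorname{H}^{n+2}_{et}(U,\mathbb{Z}(n))_{p^{r}}$, and Proposition \ref{supp} applied to the one-dimensional regular scheme $U$ finishes the proof. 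Your substitutes cannot replace this: lifting to mixed characteristic so as to quote (i) fails because (i) only covers coefficients prime to the residue characteristic; ``mimicking (i) verbatim with $\mu_{l}^{\otimes\bullet}$ replaced by $\nu_{r}^{\bullet}$'' would require an analogue of Gabber's absolute purity for $\nu_{r}^{n}$, which is exactly what the Gysin/Shiho input is there to supply at the closed point; and the injectivity argument ``via Bloch--Gabber--Kato and injectivity of symbols'' concerns the sheaf $\nu_{r}^{n}$ in degree $0$, not $\operatorname{H}^{1}_{et}$, where injectivity in the paper again comes from the excision isomorphism together with the dimension-one case.
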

Moreover we can consider a generalization of Artin's theorem (\cite[p.98, Theorem (3.1)]{Gr}) as follows.
\begin{prop}\upshape(Proposition \ref{GBsur})
Let $\mathcal{O}_{K}$ be the Henselization of a discrete valuation ring
which is an essentially of finite type over a field $k$. 
Let $\pi: \mathfrak{X}\to \spec \mathcal{O}_{K}$ 
be a proper and
smooth morphism and $Y$ the closed fiber of $\pi$. 
Suppose that $\operatorname{dim}(\mathfrak{X})=2$.

Then we can define a morphism
\begin{equation*}
\operatorname{H}^{n}_{\operatorname{B}}(\mathfrak{X})
\to
\operatorname{H}^{n}_{\operatorname{B}}(Y)
\end{equation*}
%
and this morphism is a surjective.
\end{prop}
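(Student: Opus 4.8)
The plan is to mimic the structure of Artin's original argument for Brauer groups, reducing the statement about the generalized group $\operatorname{H}^{n}_{\operatorname{B}}$ to a statement in logarithmic Hodge--Witt (and $\mu_l^{\otimes}$) cohomology via the identifications $\operatorname{H}^{i}_{\operatorname{B}}(X)_{l}=\operatorname{H}^{i}_{et}(X,\mu_l^{\otimes(i-1)})$ (prime to $p$) and $\operatorname{H}^{i}_{\operatorname{B}}(X)_{p^r}=\operatorname{H}^{1}_{et}(X,\nu^{i-1}_r)$ from Problem~\ref{prob}(i)/Theorem~\ref{Res1}(i). Since $\mathcal{O}_K$ is Henselian, proper base change (in the prime-to-$p$ case) and proper base change for logarithmic de Rham--Witt sheaves identify the cohomology of $\mathfrak{X}$ with that of the closed fiber $Y$ up to a transfer map; the first step is to make this identification precise torsion-prime-to-$p$ and then separately handle the $p$-primary part.

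First I would fix a prime $\ell$. If $\ell\neq p=\operatorname{char}(k(Y))$, then by the smooth and proper base change theorem for the Henselian pair $(\mathcal{O}_K,\mathfrak{m})$ the restriction $\operatorname{H}^{n}_{et}(\mathfrak{X},\mu_\ell^{\otimes(n-1)})\to\operatorname{H}^{n}_{et}(Y,\mu_\ell^{\otimes(n-1)})$ is an isomorphism, so that part of the map is even bijective. The substance is the $p$-primary part: here one uses that $\pi$ is proper and smooth of relative dimension $1$ over the Henselian trait, and invokes the comparison between $\nu^{j}_{r}$ on $\mathfrak{X}$ and on $Y$. I would build the map $\operatorname{H}^{1}_{et}(\mathfrak{X},\nu^{n-1}_r)\to\operatorname{H}^{1}_{et}(Y,\nu^{n-1}_r)$ as the composite of the restriction along $Y\hookrightarrow\mathfrak{X}$, after noting that $Y$ is a regular scheme over a perfect field (the residue field of $\mathcal{O}_K$) of dimension $1$, so $\operatorname{H}^{n}_{\operatorname{B}}(Y)$ makes sense and its $p$-part is $\operatorname{H}^{1}_{et}(Y,\nu^{n-1}_r)$ by the equi-characteristic case already in Theorem~\ref{Res1}(i).

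For surjectivity I would argue on the closed fiber directly. Using the localization/Gersten resolution for $\nu^{n-1}_r$ on the regular surface $\mathfrak{X}$ together with the Gersten resolution on $Y$, and the fact (from Theorem~\ref{DWE} / Theorem~\ref{Res1}) that both admit explicit descriptions in terms of residue fields of codimension-one points, a class in $\operatorname{H}^{1}_{et}(Y,\nu^{n-1}_r)$ is represented by data on the generic point $k(Y)$ of $Y$ together with compatible residues at closed points of $Y$; each closed point of $Y$ lies on $\mathfrak{X}$ as a codimension-two point, and one lifts the generic datum from $k(Y)$ to $k(\mathfrak{X})$ using that $\mathcal{O}_{\mathfrak{X},\eta_Y}$ is a discrete valuation ring with residue field $k(Y)$ and that surjectivity of $\operatorname{H}^{1}(\nu^{n}_r)\twoheadrightarrow\operatorname{H}^{1}(\nu^{n-1}_r)$ for such a DVR is standard (boundary maps in the localization sequence for logarithmic Hodge--Witt are surjective onto the residue-field cohomology). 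Then one checks the lifted class on $k(\mathfrak{X})$ has the prescribed residues along the components of $Y$ and trivial residues along the horizontal divisors, adjusting by classes supported on horizontal divisors, which by properness specialize correctly; this produces a global class on $\mathfrak{X}$ mapping to the given one on $Y$.

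The main obstacle I expect is precisely this last lifting/adjustment step in the $p$-primary case: controlling the residues of the lifted class along the \emph{horizontal} prime divisors of $\mathfrak{X}$ (those not contained in $Y$) and showing one can correct them without disturbing the vertical residues. This is where properness of $\pi$ and the two-dimensionality are essential, and where one must invoke the reciprocity/exactness at the bottom of the Gersten complex — i.e. the part of Problem~\ref{prob}(iii) proved in Theorem~\ref{Res1}(iii) for two-dimensional regular schemes proper and flat over a Henselian DVR — to see that the obstruction class in $\bigoplus_{x\in\mathfrak{X}^{(1)}}\operatorname{H}^{0}$-type terms vanishes. Assembling the prime-to-$p$ bijectivity with the $p$-primary surjectivity then yields surjectivity of $\operatorname{H}^{n}_{\operatorname{B}}(\mathfrak{X})\to\operatorname{H}^{n}_{\operatorname{B}}(Y)$ after passing to the (co)limit over torsion, using that both groups are torsion for $n\geq 2$.
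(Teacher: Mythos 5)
Your prime-to-$p$ step (proper base change over the Henselian base) is in the spirit of the paper, but the proposal has two genuine gaps. First, you use the identifications $\operatorname{H}^{n}_{\operatorname{B}}(\cdot)_{\ell}\cong\operatorname{H}^{n}_{et}(\cdot,\mu_{\ell}^{\otimes(n-1)})$ and $\operatorname{H}^{n}_{\operatorname{B}}(\cdot)_{p^{r}}\cong\operatorname{H}^{1}_{et}(\cdot,\nu^{n-1}_{r})$ on the global schemes $\mathfrak{X}$ and $Y$, but Theorem \ref{Res1}(i) gives these only for local rings; for a global scheme the $m$-torsion of $\operatorname{H}^{n+1}_{et}(\cdot,\mathbb{Z}(n-1))$ is in general only a quotient of the finite-coefficient group (already for the Brauer group a Picard-type contribution must be killed). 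The paper handles this by proving $\operatorname{H}^{n+2}_{Zar}(\mathfrak{X},\mathbb{Z}(n))=0$ (Corollary \ref{vm}, which rests on Proposition \ref{sur}) and its analogue on $Y$, then comparing Nisnevich and etale motivic cohomology (diagrams (\ref{Gam}) and (\ref{etm})) to identify the relevant torsion with $\Gamma(\cdot,\mathbf{R}^{n+1}\alpha_{*}\mathbb{Z}/m(n)_{et})$; it also defines the map $\operatorname{H}^{n}_{\operatorname{B}}(\mathfrak{X})\to\operatorname{H}^{n}_{\operatorname{B}}(Y)$ via Levine's functoriality rather than by the ad hoc restrictions you describe. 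None of this bookkeeping appears in your proposal, and your ``assembly'' step at the end presupposes exactly these identifications.

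Second, and more seriously, the $p$-primary surjectivity --- the actual content of the proposition --- is not proven. You yourself flag the lifting/adjustment along horizontal divisors as the main obstacle, and the tool you invoke there (exactness of the Gersten-type sequence from Theorem \ref{CDVRS}) gives exactness of a complex, not the moving statement that a class on $k(\mathfrak{X})$ exists with prescribed residues along the components of $Y$ and trivial residues along the horizontal divisors. The paper's mechanism is different: after proper base change one must compare $\Gamma\left(Y,\mathbf{R}^{n+1}\alpha_{*}i^{*}\mathbb{Z}/p(n)_{et}\right)$ with $\Gamma\left(Y,\mathbf{R}^{n+1}\alpha_{*}\mathbb{Z}/p(n)_{et}\right)$, i.e.\ the restriction to $Y$ of the logarithmic Hodge--Witt theory of $\mathfrak{X}$ with the intrinsic one of $Y$; this is done using the Kato--Saito vanishing $\operatorname{H}^{n+2}_{Nis}(Y,i^{*}\mathbb{Z}/p(n))=0$ together with the key new ingredient, Theorem \ref{hendR}, the Artin-type lemma that $\operatorname{H}^{1}\left((\spec A)_{et},\Omega^{i}_{A,\log}\right)\to\operatorname{H}^{1}\left((\spec A/(t))_{et},\Omega^{i}_{A/(t),\log}\right)$ is an isomorphism for a Henselian regular local $\mathbb{F}_{p}$-algebra $A$ and a regular element $t$. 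Nothing in your sketch plays the role of this lemma, so the crucial step is missing. (A smaller slip: the residue field of $\mathcal{O}_{K}$ is essentially of finite type over $k$, hence not perfect in general; what you actually need is that $Y$ is smooth over it, which holds since $\pi$ is smooth.)
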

\section{Notations}
For a scheme $X$, $X_{et}$ and $X_{Zar}$ denote the category of etale schemes over $X$ equipped with the etale and
Zariski topology, respectively. $X^{(i)}$ denotes the set of points of codimension $i$ and $X_{(i)}$ denotes the set of
points of dimension $i$. $\kappa(x)$ denotes the residue field of $x\in X$.

For $t\in \{\operatorname{et}, \operatorname{Zar}\}$, $\mathbb{S}_{X_{t}}$ denotes the category of sheaves on $X_{t}$.

For a scheme $X$ over $\mathbb{F}_{p}$, 
we denote the differential module 
$\Omega^{i}_{X/\mathbb{F}_{p}}$
simply by $\Omega^{i}_{X}$ and we denote
\begin{math}
\operatorname{Ker}
\left(
d: \Omega^{i}_{X}
\to
\Omega^{i+1}_{X}
\right)
\end{math}
simply by $Z\Omega^{i}_{X}$. 

Moreover we
denote the logarithmic Hodge-Witt sheaf
by $W_{n}\Omega^{i}_{X, \log}$ (cf.\cite[p.575, Definition 2.6.]{Sh})
and the logarithmic Hodge-Witt sheaf
$W_{1}\Omega^{i}_{X, \log}$
simply by
$\Omega^{i}_{X, \log}$.
\section{Definition of $\operatorname{H}^{i}_{\operatorname{B}}(X)$}\label{defgb}
Let $D_{i}=\mathbb{Z}[t_{0}, \cdots, t_{i}]/(\displaystyle\sum_{i}t_{i}-1)$, and $\Delta^{i}=\spec D_{i}$ be the 
algebraic $i$-simplex. For an equi-dimensional scheme $X$, let $z^{n}(X, i)$ be the free abelian group on closed
integral subschemes of codimension $n$ of $X\times \Delta^{i}$, which intersect all faces property. Intersecting with 
faces defines the structure of a simplicial abelian group, and hence gives a (homological) complex 
$z^{n}(X, *)$.
\vspace{0.5mm}\\

The complex of sheaves $\mathbb{Z}(n)_{t}$ on the site $X_{t}$, where $t\in \{et, Zar\}$, is defined as the cohomological 
complex with
$z^{n}(-, 2n-i)$ in degree $i$. 
For an abelian group $A$ we define $A(n)$ to be $\mathbb{Z}(n)\otimes A$.

Assume that $X$ is a smooth scheme of finite type over a field or a Dedekind ring, then there is a quasi-isomorphism
\begin{math}
\mathbb{Z}(1)\simeq \mathbb{G}_{m}[-1]. 
\end{math}

Moreover there is a quasi-isomorphism of complexes of etale sheaves

\begin{math}
\mathbb{Z}/l(n)_{et}\xrightarrow{\sim}\mu_{l}^{\otimes n}
\end{math}
for an integer $l$ which is invertible in $X$ by \cite[p.774, Theorem 1.2.4]{Ge} and \cite{V2}.
\vspace{1.0mm}\\

Let $\Gamma_{t}$ be the global sections functor from $\mathbb{S}_{X_{t}}$. Then the right derived functor 
$\mathbf{R}\Gamma_{t}$ exists on the derived category $\mathbf{D}(\mathbb{S}_{X_{t}})$. 
The (etale) motivic cohomology is defined as
\begin{equation*}
\operatorname{H}^{i}_{t}\left(X, A(n)\right)=\operatorname{H}^{i}\mathbf{R}\Gamma_{t}\left(A(n)\right).
\end{equation*}
If $t=Zar$ or $A(n)$ is isomorphic to a bounded below complex, then the hyper-derived functors 
$\mathbb{R}^{i}\Gamma_{t}\left(A(n)\right)$ 
are the cohomology of $\mathbf{R}\Gamma_{t}\left(A(n)\right)$:
\begin{equation*}
\mathbb{R}^{i}\Gamma_{t}\left(A(n)\right)=\operatorname{H}^{i}\mathbf{R}\Gamma_{t}\left(A(n)\right)
\end{equation*}
for all $i$.
\begin{defi}\upshape
Let $X$ be an equi-dimensional scheme. Then we define $\operatorname{H}^{i}_{\operatorname{B}}(X)$ as
\begin{equation*}
\operatorname{H}^{i}_{\operatorname{B}}(X)=\operatorname{H}^{i+1}_{et}(X, \mathbb{Z}(i-1))_{tor}.
\end{equation*}
\end{defi}
\begin{rem}\upshape
Let $X$ be a Noether regular integral scheme and $R(X)$ the ring of rational
functions on X. Then the canonical morphism
\begin{math}
\operatorname{Br}(X)\to
\operatorname{Br}(R(X)) 
\end{math}
is an injective by \cite[p.106, III, Example 2.22]{M} and $\operatorname{Br}(R(X))$ is a torsion group by 
\cite[p.53, II, Theorem 1.9]{M}.
Hence $\operatorname{Br}(X)$ is a torsion group.

But $\operatorname{H}^{n+2}_{et}(X, \mathbb{Z}(n))$ is not torsion group for any regular scheme. For example, by the relation
\begin{math}
\operatorname{H}^{i}_{et}(\spec \mathbf{P}^{m}_{K}, \mathbb{Z}(n))=\displaystyle\bigoplus^{m}_{j=0}\operatorname{H}^{i-2j}_{et}(\spec K, \mathbb{Z}(n-j))
\end{math}
for a positive integer $m$, we have
\begin{equation*}
\operatorname{H}^{4}_{et}(\spec \mathbf{P}^{m}_{K}, \mathbb{Z}(2))=\operatorname{H}^{0}_{et}(\spec K, \mathbb{Z}(0))=\mathbb{Z}
\end{equation*}
for an integer $m\geq 2$.
Hence 
\begin{math}
\operatorname{H}^{4}_{et}(\spec \mathbf{P}^{2}_{K}, \mathbb{Z}(2)) 
\end{math}
is not torsion group for an integer $m\geq 2$.
\end{rem}
\section{Results about Problem \ref{prob}}
In this section, we prove that Problem \ref{prob} is true on some cases.
\begin{lem}\upshape
\label{LDW}
(cf.\cite[Theorem 8.5]{Ge-L})
Let $B$ be an excellent regular ring which satisfies 
$\operatorname{dim}(B)\leq 1$ and $\operatorname{char}(B)=p>0$.

Let $X$ be an essentially smooth scheme over $\spec B$. Then
we have a quasi-isomorphism
\begin{equation}
\label{MDW}
\mathbb{Z}/p^{r}(n)_{Zar}\simeq \nu^{n}_{r}[-n]. 
\end{equation}
Here $\nu_{r}^{n}$ is the logarithmic de Rham-Witt sheaf 
$\operatorname{W\Omega}^{n}_{r, \log}$.
\end{lem}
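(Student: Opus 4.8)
The plan is to reduce the statement to the known Gersten-type resolution for the logarithmic de Rham--Witt sheaves together with the computation of the motivic complex modulo $p^{r}$ in terms of $\nu^{n}_{r}$. First I would recall that, since $B$ is excellent regular of dimension $\leq 1$ and of characteristic $p$, and $X$ is essentially smooth over $\spec B$, the scheme $X$ is itself regular and every local ring $\mathcal{O}_{X,x}$ is an essentially smooth local algebra over a field of characteristic $p$ (or over such a discrete valuation ring). The key input is the identification of the mod $p^{r}$ motivic complex with the Bloch--Kato--Gabber-type object: by Geisser--Levine (\cite{Ge-L}, Theorem 8.5 and its surrounding results), for smooth schemes over a perfect field one has a quasi-isomorphism $\mathbb{Z}/p^{r}(n)_{Zar}\simeq \nu^{n}_{r}[-n]$, where $\nu^{n}_{r}=W_{r}\Omega^{n}_{X,\log}$. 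So the substance of the lemma is to extend this from the smooth-over-a-perfect-field case to the essentially smooth-over-$B$ case stated here.

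The main steps would be as follows. Step one: establish the statement when $X$ is smooth of finite type over a perfect field $k$ of characteristic $p$; this is exactly the cited Geisser--Levine result, using that $\mathbb{Z}/p^{r}(n)$ is concentrated in degree $n$ on the Zariski site with cohomology sheaf the Nisnevich/Zariski sheaf associated to $K^{M}_{n}/p^{r}$, and that the Bloch--Kato--Gabber theorem identifies $K^{M}_{n}/p^{r}$ with $\nu^{n}_{r}$ via dlog. Step two: pass to essentially smooth local rings over $k$ by taking filtered colimits, using that both $\mathbb{Z}/p^{r}(n)$ (via $z^{n}(-,*)$, which commutes with filtered colimits of rings) and $\nu^{n}_{r}$ commute with such colimits; since a quasi-isomorphism of complexes of Zariski sheaves can be checked on stalks, i.e. on local rings, this yields the result for all essentially smooth $k$-algebras. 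Step three: handle the base $B$ of dimension $1$. Here I would either invoke the relative version of Geisser--Levine over a Dedekind base directly, or argue as follows: write $B$ as a filtered colimit / use that $X\to\spec B$ essentially smooth means $X$ is regular and, Zariski-locally, admits a smooth coordinate presentation over $B$; the sheaf $\mathbb{Z}/p^{r}(n)_{Zar}$ is still concentrated in degree $n$ (by the Gersten/Geisser--Levine vanishing above the expected degree, which holds over Dedekind bases by Levine and Geisser), its top cohomology sheaf is the sheafification of $K^{M}_{n}/p^{r}$, and the dlog map identifies this with $\nu^{n}_{r}$ exactly as in the equicharacteristic case because the residue fields all have characteristic $p$ and the relevant Bloch--Kato--Gabber statement is insensitive to whether the base is a field or a DVR of characteristic $p$.

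The hard part will be Step three, i.e. controlling the motivic complex over the one-dimensional base $B$: one must know that $\mathbb{Z}/p^{r}(n)_{Zar}$ has no cohomology outside degree $n$ and that $\mathcal{H}^{n}$ is the sheafified $K^{M}_{n}/p^{r}$. Over a perfect field this is Geisser--Levine; over a characteristic-$p$ Dedekind ring it follows from the Gersten resolution for $\mathbb{Z}/p^{r}(n)$ established in the same circle of ideas (and used again later in the paper, e.g. around Lemma \ref{LDW} and the results of Section \ref{defgb}), combined with the fact that a smooth $B$-scheme of relative dimension $d$ has motivic cohomological dimension bounded appropriately so that the only surviving sheaf cohomology of $\mathbb{Z}/p^{r}(n)$ is in degree $n$. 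Once the complex is so identified, the quasi-isomorphism $\mathbb{Z}/p^{r}(n)_{Zar}\simeq\nu^{n}_{r}[-n]$ is immediate from Bloch--Kato--Gabber applied to each (characteristic $p$) residue field, since $\nu^{n}_{r}$ is by definition the image of $d\log$ on $W_{r}\Omega^{n}$ and the comparison is functorial in the local ring. I would close by remarking that the same argument gives the étale-local statement needed elsewhere, but on the Zariski site this is all that is required.
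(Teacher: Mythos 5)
Your outline follows the right general strategy (concentrate $\mathbb{Z}/p^{r}(n)_{Zar}$ in degree $n$, identify $\mathcal{H}^{n}$ with sheafified Milnor $K$-theory mod $p^{r}$, pass to $\nu^{n}_{r}$ by $\operatorname{dlog}$), but Step three, which is the actual content of the lemma, has a genuine gap. The claim that ``the dlog map identifies the sheafification of $K^{M}_{n}/p^{r}$ with $\nu^{n}_{r}$ exactly as in the equicharacteristic case because Bloch--Kato--Gabber is insensitive to whether the base is a field or a DVR of characteristic $p$'' is not a proof: Bloch--Gabber--Kato is a theorem about fields, and an isomorphism at every residue field does not by itself assemble into an isomorphism of Zariski sheaves on $X$. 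What is needed, and what the paper's proof supplies, is a Gersten-type description of $\nu^{n}_{r}$ itself on the regular scheme $X$ (which is only essentially smooth over a one-dimensional base, not over a perfect field): Shiho's theorem (\cite[p.608, Theorem 5.2]{Sh}) exhibits $\nu^{n}_{r}$ as the kernel of the residue map from the generic points to the codimension-one points, Geisser's \cite[p.786, Corollary 4.5]{Ge} gives the parallel kernel description of $\mathcal{H}^{t}\left(\mathbb{Z}/p^{r}(n)_{Zar}\right)$ over a Dedekind base, and the two kernels are compared via Nesterenko--Suslin and Bloch--Gabber--Kato at the level of fields \emph{together with} the compatibility of the boundary maps (\cite[Lemma 3.2]{Ge-L} on the motivic side, the definition of the residue map in \cite{Sh} on the de Rham--Witt side). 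Your proposal addresses neither the sheaf-level kernel/injectivity statement for $\nu^{n}_{r}$ nor the boundary-map compatibility, and these are precisely the nontrivial inputs; the vanishing of $\mathcal{H}^{t}$ for $t\neq n$, which you do mention, is the easier half.

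Two smaller points. Your fallback in Step three, to ``invoke the relative version of Geisser--Levine over a Dedekind base directly,'' is circular: that relative statement is exactly what the lemma asserts. And in Step two the assertion that $z^{n}(-,*)$ commutes with filtered colimits of rings requires flat transition maps (as in Remark \ref{direct}); this is harmless for localizations of smooth finite-type schemes, but if you intend a Popescu-type colimit to reduce the one-dimensional base to the perfect-field case you must justify the limit argument for the resulting non-flat transition maps, which your write-up does not do.
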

\begin{proof}
We consider the following diagram, where the vertical maps are the symbol 
maps and the horizontal maps are the boundary maps:
\begin{equation*}
\begin{CD}
\displaystyle
\bigoplus_{x\in X^{(0)}}\operatorname{H}^{n}\left(
\kappa(x), \mathbb{Z}/p^{r}(n)
\right)
@>>>
\displaystyle
\bigoplus_{x\in X^{(1)}}\operatorname{H}^{n-1}\left(
\kappa(x), \mathbb{Z}/p^{r}(n-1)
\right)\\
@AAA @AAA\\
\displaystyle
\bigoplus_{x\in X^{(0)}}
\operatorname{K}^{M}_{n}(\kappa(x))/p^{r}
@>>>
\displaystyle
\bigoplus_{x\in X^{(1)}}
\operatorname{K}^{M}_{n-1}(\kappa(x))/p^{r}\\
@VVV @VVV\\
\displaystyle
\bigoplus_{x\in X^{(0)}}
\nu^{n}_{r}(\kappa(x))
@>>>
\displaystyle
\bigoplus_{x\in X^{(1)}}
\nu^{n-1}_{r}(\kappa(x))
\end{CD}
\end{equation*}

The upper square is commutative by \cite[Lemma 3.2]{Ge-L}
and the lower square is commutative by the definition of 
the boundary map (cf.\cite[p.605]{Sh}).
The vertical maps are isomorphisms by the theorem of
Nesterenko-Suslin (\cite[Theorem 4.9]{N-Su}) and 
the theorem of
Bloch-Gabber-Kato (\cite[p.117, Corollary (2.8)]{B-K}).

Moreover
\begin{align*}
\nu_{r}^{n}
=
\operatorname{Ker}\left(
\displaystyle
\bigoplus_{x\in X^{(0)}}
(i_{x})_{*}\nu_{r}^{n}\left(
\kappa(x)
\right)\to
\displaystyle
\bigoplus_{x\in X^{(1)}}
(i_{x})_{*}
\nu_{r}^{n-1}\left(
\kappa(x)
\right)
\right)
\end{align*}
by \cite[p.608, Theorem 5.2]{Sh} and 

\begin{align*}
&\mathcal{H}^{t}\left(
\mathbb{Z}/p^{r}(n)_{Zar}
\right)\\
=
&\operatorname{Ker}\left(
\displaystyle
\bigoplus_{x\in X^{(0)}}
(i_{x})_{*}\operatorname{H}^{t}\left(
\kappa(x), \mathbb{Z}/p^{r}(n)
\right)\to
\displaystyle
\bigoplus_{x\in X^{(1)}}
(i_{x})_{*}\operatorname{H}^{t-1}\left(
\kappa(x), \mathbb{Z}/p^{r}(n-1)
\right)
\right)
\end{align*}
by \cite[p.786, Corollary 4.5]{Ge}. Hence 
\begin{math}
\mathcal{H}^{t}\left(
\mathbb{Z}/p^{r}(n)_{Zar}
\right)
=0
\end{math}
for $t\neq n$ by \cite[Theorem 1.1]{Ge-L} and we have an isomorphism
\begin{math}
\mathcal{H}^{n}\left(
\mathbb{Z}/p^{r}(n)_{Zar}
\right)\simeq
\nu^{n}_{r}.
\end{math}
Therefore a composition of
maps of Zariski sheaves
\begin{align*}
&\mathbb{Z}/p^{r}(n)_{Zar}
\xleftarrow{}
\tau_{\leq n}\mathbb{Z}/p^{r}(n)_{Zar}
\xrightarrow{}
\mathcal{H}^{n}\left(
\tau_{\leq n}\mathbb{Z}/p^{r}(n)
\right)[-n]\\
\simeq
&\mathcal{H}^{n}\left(
\mathbb{Z}/p^{r}(n)
\right)[-n]
\simeq
\nu_{r}^{n}[-n]
\end{align*}
is a quasi-isomorphism and the statement follows.

\end{proof}
\begin{rem}\upshape
\label{direct}
Let $\{A_{\lambda}; f_{\mu\lambda}\}$ be a direct system of rings. 
Suppose that $\displaystyle\lim_{\substack{\to\\ \lambda}}A_{\lambda}=A$ is a Noether ring and
$f_{\mu\lambda}$ are flat. Then
\begin{equation*}
\lim_{\substack{\to\\ \lambda}} z^{n}(A_{\lambda}, *)=z^{n}(A, *). 
\end{equation*}
\end{rem}
\begin{prop}\upshape\label{Gysin}
Let 
$A$ be a discrete valuation ring which is 
an essentially of finite type over a field $k$ and 
$\operatorname{char}(A)=p>0$.

Let $(B, \mathfrak{n})$ be a local ring which is smooth scheme over $A$, 
$\operatorname{dim}(B)=d$ and 
$i: \spec(B/\mathfrak{n})\to \spec(B)$ the inclusion map.

Let the homomorphism
%
\begin{equation}\label{MGys}
\mathbb{Z}/p^{r}(n-d)_{et}[-2d]\to
\mathbf{R}i^{!}\mathbb{Z}/p^{r}(n)_{et}
\end{equation}
be induced by adjointness from the natural inclusion map. 

Then the homomorphism
\begin{equation*}
\operatorname{H}^{n-d}
\left(
\spec(B)_{et}, \mathbb{Z}/p^{r}(n-d)
\right)
\to
\operatorname{H}^{n+d}_{\spec(B/\mathfrak{n})}
\left(
\spec(B)_{et}, \mathbb{Z}/p^{r}(n)
\right)
\end{equation*}
which is defined by the map (\ref{MGys}) agrees with the canonical map
\begin{equation*}
\operatorname{H}^{0}\left(
\spec(B/\mathfrak{n})_{et},
\operatorname{W\Omega}^{n-d}_{r, \log}
\right)
\to
\operatorname{H}^{d}_{\spec(B/\mathfrak{n})}\left(
\spec(B)_{et}, 
\operatorname{W\Omega}^{n}_{r, \log}
\right)
\end{equation*}
which is defined in \cite{Sh} up to sign
via the 
quasi-isomorphism $(1)$
in Lemma \ref{LDW}.
\end{prop}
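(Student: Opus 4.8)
The plan is to show that the Gysin map in motivic cohomology defined by the adjunction map (\ref{MGys}) coincides, after applying the quasi-isomorphisms of Lemma \ref{LDW}, with the Gysin map for logarithmic de Rham–Witt sheaves constructed in \cite{Sh}. Both maps are defined using an adjunction $\mathbf{R}i^{!}$, so the statement is really a compatibility between the purity/trace isomorphism in Bloch's cycle complex and the one in Shiho's theory. First I would unwind both constructions to a common footing: on the Zariski site, Lemma \ref{LDW} gives $\mathbb{Z}/p^{r}(n)_{Zar}\simeq \nu^{n}_{r}[-n]$ and $\mathbb{Z}/p^{r}(n-d)_{Zar}\simeq\nu^{n-d}_{r}[-(n-d)]$, so the source group is $\operatorname{H}^{0}(\spec(B)_{et},\nu^{n-d}_{r})$ shifted appropriately (note $B$ is local, so this is just global sections), and the target is the local cohomology group $\operatorname{H}^{d}_{\spec(B/\mathfrak{n})}(\spec(B)_{et},\nu^{n}_{r})$.

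**Next I would** reduce to the codimension-one case by factoring. Since $B$ is smooth over $A$ of relative dimension $d-1$ (as $\operatorname{dim}(A)=1$ and $\operatorname{dim}(B)=d$) — or more precisely, since the regular closed immersion $\spec(B/\mathfrak{n})\hookrightarrow\spec(B)$ of codimension $d$ can be filtered by a sequence of codimension-one regular immersions — I would use the transitivity of both Gysin maps under composition of closed immersions. Transitivity for the motivic Gysin map follows from functoriality of $\mathbf{R}i^{!}$ and the associativity of the purity isomorphisms (compatible with the product structure on the cycle complexes), while transitivity for Shiho's Gysin map is part of the formalism in \cite{Sh}. This reduces the problem to a single smooth closed immersion of codimension one, $Z=\spec(B/(f))\hookrightarrow\spec(B)$, where everything is governed by a Gysin sequence / localization triangle.

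**Then the core step** is to identify the two purity isomorphisms in codimension one. On the motivic side, the localization triangle for $\mathbb{Z}/p^{r}(n)_{et}$ together with the Gersten-type resolution (the computation of $\mathcal{H}^{t}(\mathbb{Z}/p^{r}(n)_{Zar})$ used in Lemma \ref{LDW}, and its étale analogue via \cite[p.786, Corollary 4.5]{Ge}) expresses $\mathbf{R}i^{!}\mathbb{Z}/p^{r}(n)_{et}$ in terms of $\mathbb{Z}/p^{r}(n-1)_{et}[-2]$ on $Z$; the residue/boundary map here is, by \cite[Lemma 3.2]{Ge-L} and the definition of the boundary in \cite[p.605]{Sh}, compatible with the residue map on Milnor $K$-theory mod $p^{r}$, which via Bloch–Gabber–Kato \cite[p.117, Corollary (2.8)]{B-K} and Nesterenko–Suslin \cite[Theorem 4.9]{N-Su} is exactly the residue on $\nu^{\bullet}_{r}$. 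Since Shiho's Gysin map in \cite{Sh} is defined via precisely this residue structure on the logarithmic de Rham–Witt sheaves, the two maps agree up to a sign coming from the choice of local coordinate and the ordering conventions in the Koszul/simplicial signs. I would make the sign bookkeeping explicit only to the extent of noting it is the source of the "up to sign" in the statement, comparing the simplicial sign conventions of $z^{n}(X,*)$ with the differential conventions in \cite{Sh}.

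**The main obstacle** I expect is the careful comparison of the adjunction-defined motivic Gysin map with the more hands-on residue-theoretic description — i.e., checking that the abstract map induced by (\ref{MGys}) is actually computed by the residue maps appearing in the Gersten complexes, rather than merely being abstractly isomorphic to such a map. This requires knowing that the quasi-isomorphism $\mathbb{Z}/p^{r}(n)_{Zar}\simeq\nu^{n}_{r}[-n]$ is compatible with localization triangles and with the forgetful map $\epsilon^{*}$ from the Zariski to the étale site, so that the identification is functorial enough to transport the Gysin map. I would handle this by invoking the compatibility of Geisser's computations \cite{Ge} with Shiho's Gersten resolution \cite[p.608, Theorem 5.2]{Sh}, reducing everything to the commutativity of the diagram already displayed in the proof of Lemma \ref{LDW} extended to include the local cohomology terms.
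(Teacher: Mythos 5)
Your strategy diverges from the paper's, and as written it has two genuine gaps. The paper's proof is a two-step reduction: by Quillen's method (cf.\ the proof of \cite[Theorem 5.11]{Q}) the statement is reduced to the case where $B$ is essentially smooth over a perfect field, and in that case the compatibility of the adjunction-induced map with Shiho's Gysin map is exactly what is supplied by \cite[Proposition 2.3.1]{Sat} together with \cite[Theorems 2.15 and 2.16]{Z}. You never make this reduction and never supply a substitute for the Sato--Zhong comparison; instead your ``core step'' asserts that Shiho's Gysin map ``is defined via precisely this residue structure,'' which is essentially the statement to be proved. The residue compatibilities you quote (\cite[Lemma 3.2]{Ge-L}, Bloch--Gabber--Kato, Nesterenko--Suslin) identify the Gersten complexes of the two theories term by term, but they do not by themselves show that the map induced by adjointness from (\ref{MGys}) is \emph{computed} by those residues; transporting the abstract adjunction map through the quasi-isomorphism of Lemma \ref{LDW} and matching it with Shiho's concretely constructed map is the nontrivial content, and the paper deliberately outsources it to Sato and Zhong rather than re-deriving it from the diagram in Lemma \ref{LDW}.

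The second gap is the reduction to codimension one by factoring $\spec(B/\mathfrak{n})\hookrightarrow\spec(B)$ through a chain of codimension-one regular immersions. The intermediate quotients $B/(f_{1},\dots,f_{j})$ are regular local rings, but in general they are neither smooth over $A$ nor essentially smooth over a field in a way covered by Lemma \ref{LDW}, so at each stage you have available neither the quasi-isomorphism $\mathbb{Z}/p^{r}(m)_{Zar}\simeq\nu^{m}_{r}[-m]$ nor a codimension-one purity isomorphism for $\mathbb{Z}/p^{r}(m)_{et}$. For $p$-torsion coefficients in characteristic $p$ such purity is itself a deep theorem (this is the substance of Shiho's and Sato's work), not a formal consequence of the functoriality of $\mathbf{R}i^{!}$, and the transitivity statements for the two Gysin maps that you invoke would have to be established with these inputs already in place. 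So the factorization does not reduce the problem to anything simpler unless you first prove exactly the kind of statements you are trying to avoid; the paper's route (reduce to a perfect base field by Quillen's method, then cite the known comparison there) is what closes this circle.
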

\begin{proof}\upshape
By Quillen's method (cf.\cite[Proof of Theorem 5.11]{Q}), 
it is sufficient to prove that the statement is true in
the case where 
$B$ is an essentially smooth over a perfect field.
Therefore the statement follows from 
\cite[Proposition 2.3.1]{Sat}, \cite[Theorem 2.15]{Z}
and \cite[Theorem 2.16]{Z}. 
\end{proof}
\begin{prop}
\upshape
\label{leD}
Let $X$ be a local ring at a point $x$ of an essentially smooth scheme over a Dedekind ring. Then Problem \ref{prob} (i) is true.
\end{prop}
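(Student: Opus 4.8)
The plan is to prove Problem \ref{prob} (i) by splitting into the prime-to-characteristic case (i-a) and the $p$-primary case (i-b), and in each case reducing the statement about $\operatorname{H}^{i}_{\mathrm{B}}(X)=\operatorname{H}^{i+1}_{et}(X,\mathbb{Z}(i-1))_{tor}$ to a known comparison between the torsion of integral motivic cohomology and a twisted coefficient group. First I would note that, by Remark \ref{direct} together with the fact that a local ring at a point of an essentially smooth scheme over a Dedekind ring is a filtered colimit of smooth algebras over the Dedekind ring with flat transition maps, we may pass to the limit and assume $X=\spec\mathcal{O}_{Y,x}$ for $Y$ smooth of finite type over a Dedekind ring, so that all the quasi-isomorphisms recalled in $\S\ref{defgb}$ (namely $\mathbb{Z}/l(n)_{et}\xrightarrow{\sim}\mu_{l}^{\otimes n}$ for $l$ invertible) and Lemma \ref{LDW} (namely $\mathbb{Z}/p^{r}(n)_{Zar}\simeq\nu^{n}_{r}[-n]$ in the equal-characteristic-$p$ case) are available.

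For part (i-a), I would use the Bockstein/coefficient sequence $0\to\mathbb{Z}(i-1)\xrightarrow{\times l}\mathbb{Z}(i-1)\to\mathbb{Z}/l(i-1)\to0$ on the \'etale site. Taking \'etale hypercohomology gives an exact sequence relating $\operatorname{H}^{i+1}_{et}(X,\mathbb{Z}(i-1))/l$, $\operatorname{H}^{i+1}_{et}(X,\mathbb{Z}/l(i-1))$, and $\operatorname{H}^{i+2}_{et}(X,\mathbb{Z}(i-1))_{l}$. The key input is that for $X$ a local ring of the stated type, $\operatorname{H}^{i+2}_{et}(X,\mathbb{Z}(i-1))$ is torsion-free (equivalently, the relevant higher motivic cohomology of a local ring is the expected thing), which lets one identify $\operatorname{H}^{i}_{\mathrm{B}}(X)_{l}=\operatorname{H}^{i+1}_{et}(X,\mathbb{Z}(i-1))_{l}$ with a subgroup of $\operatorname{H}^{i+1}_{et}(X,\mathbb{Z}/l(i-1))=\operatorname{H}^{i+1}_{et}(X,\mu_{l}^{\otimes(i-1)})$; to pin it down to exactly $\operatorname{H}^{i}_{et}(X,\mu_{l}^{\otimes(i-1)})$ one reindexes, using that the torsion of $\operatorname{H}^{i+1}_{et}(X,\mathbb{Z}(i-1))$ is $\operatorname{H}^{i}_{et}(X,\mathbb{Q}/\mathbb{Z}(i-1))=\varinjlim_{l}\operatorname{H}^{i}_{et}(X,\mu_{l}^{\otimes(i-1)})$, so its $l$-torsion is $\operatorname{H}^{i}_{et}(X,\mu_{l}^{\otimes(i-1)})$ provided the next group has no $l$-torsion, which is again the torsion-freeness statement. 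So the substance of (i-a) is the vanishing/torsion-freeness of $\operatorname{H}^{i+1}_{et}(X,\mathbb{Z}(i-1))$ away from torsion, which for local rings follows from Geisser's results (Gersten resolution / the arithmetic Beilinson--Lichtenbaum comparison, \cite{Ge}) in the range under consideration.

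For part (i-b) the argument is parallel but uses Lemma \ref{LDW}: the mod-$p^{r}$ motivic complex is $\nu^{i-1}_{r}[-(i-1)]$ on the Zariski (hence, after comparison, controls the \'etale side through the same degree argument), so $\operatorname{H}^{i}_{et}(X,\mathbb{Z}/p^{r}(i-1))\cong\operatorname{H}^{1}_{et}(X,\nu^{i-1}_{r})$ and one repeats the coefficient-sequence bookkeeping with $l=p^{r}$; here one must know that $\operatorname{H}^{i+1}_{et}(X,\mathbb{Z}(i-1))$ has no $p$-torsion for such $X$, which is where \cite{Ge-L} (Geisser--Levine) enters. I expect the main obstacle to be precisely this torsion-freeness / degree-vanishing input: one needs, for the local ring $X$, that $\operatorname{H}^{j}_{et}(X,\mathbb{Z}(i-1))$ agrees with the Lichtenbaum-motivic expectation in degrees $j=i+1,i+2$ so that the universal coefficient sequence degenerates in the right way, and assembling the equal-characteristic and mixed-characteristic cases uniformly (the Dedekind base forces one to handle the residue characteristic separately from the generic characteristic) requires care. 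Everything else—the reduction to a colimit, the Bockstein sequence, the reindexing of $\operatorname{H}^{*}(-,\mathbb{Q}/\mathbb{Z}(i-1))$—is formal once that comparison is in hand.
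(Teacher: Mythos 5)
There is a genuine gap: you have misidentified the key input, and the degrees in which you invoke it are the wrong ones. To compute the $l$-torsion of $\operatorname{H}^{i+1}_{et}(X,\mathbb{Z}(i-1))$ from the Bockstein triangle you need the portion $\operatorname{H}^{i}_{et}(X,\mathbb{Z}(i-1))\to\operatorname{H}^{i}_{et}(X,\mathbb{Z}/m(i-1))\to\operatorname{H}^{i+1}_{et}(X,\mathbb{Z}(i-1))_{m}\to 0$ of the long exact sequence, so the statement that does all the work is the vanishing $\operatorname{H}^{i}_{et}(X,\mathbb{Z}(i-1))=0$, i.e.\ vanishing in degree $=$ weight $+1$ for the local ring $X$. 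This is exactly what the paper's proof establishes, by identifying $\operatorname{H}^{i}_{et}(X,\mathbb{Z}(i-1))$ with $\operatorname{H}^{i}_{Zar}(X,\mathbb{Z}(i-1))$ (the comparison of \cite[Theorem 1.2.2]{Ge} and \cite{V2}, valid precisely in degrees $\leq$ weight $+1$), identifying the latter with the stalk of $\mathcal{H}^{i}(\mathbb{Z}(i-1)_{Zar})$ at the closed point via \cite[Theorem 3.2 (b)]{Ge}, and then quoting the Gersten-resolution vanishing \cite[Corollary 4.4]{Ge}. Once this vanishing is in hand, $\operatorname{H}^{i}_{\mathrm{B}}(X)_{m}\simeq\operatorname{H}^{i}_{et}(X,\mathbb{Z}/m(i-1))$ for every $m$, and the identifications with $\mu_{l}^{\otimes(i-1)}$ (\cite[Theorem 1.2.4]{Ge}) and with $\nu^{i-1}_{r}[-(i-1)]$ (Lemma \ref{LDW}) finish both cases at once. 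Your proposal never isolates or proves this degree-$i$ vanishing.

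What you propose instead does not substitute for it. Torsion-freeness of $\operatorname{H}^{i+2}_{et}(X,\mathbb{Z}(i-1))$ only controls the injection $\operatorname{H}^{i+1}_{et}(X,\mathbb{Z}(i-1))/l\hookrightarrow\operatorname{H}^{i+1}_{et}(X,\mathbb{Z}/l(i-1))$, i.e.\ the mod-$l$ quotient rather than the $l$-torsion you need; moreover it is not a reasonable hypothesis, since \'etale motivic cohomology in degrees $\geq$ weight $+2$ is generically all torsion (already $\operatorname{H}^{3}_{et}(k,\mathbb{Z}(0))\simeq\operatorname{H}^{2}(k,\mathbb{Q}/\mathbb{Z})$ is a nonzero torsion group for many fields), and these degrees lie outside the range where the Zariski--\'etale comparison and Gersten-type results of \cite{Ge} apply. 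Your fallback, that the torsion of $\operatorname{H}^{i+1}_{et}(X,\mathbb{Z}(i-1))$ equals $\operatorname{H}^{i}_{et}(X,\mathbb{Q}/\mathbb{Z}(i-1))$ and that its $l$-part is $\operatorname{H}^{i}_{et}(X,\mu_{l}^{\otimes(i-1)})$, itself requires exactly the degree-$i$ (at least rational) vanishing you have not addressed, so the argument is circular at the decisive point. Finally, in case (i-b) you require that $\operatorname{H}^{i+1}_{et}(X,\mathbb{Z}(i-1))$ have no $p$-torsion; that is the group whose $p^{r}$-torsion the proposition computes, so this condition would force $\operatorname{H}^{i}_{\mathrm{B}}(X)_{p^{r}}=0$ and contradicts the statement being proved --- presumably a slip of degree, but it shows the bookkeeping has not been carried out correctly. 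The remaining ingredients you list (colimit reduction, Bockstein triangle, the comparisons of \S\ref{defgb} and Lemma \ref{LDW}) are fine, but without the correct vanishing statement the proof does not go through.
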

\begin{proof}
Since 
\begin{math}
\mathcal{H}^{n+1}\left(
\mathbb{Z}(n)_{Zar}
\right)
\end{math}
is the Zariski sheaf associated to the presheaf
\begin{equation*}
U\mapsto \operatorname{H}^{n+1}\left(
\mathbb{Z}(n)_{Zar}(U)
\right),
\end{equation*}
the stalk of 
\begin{math}
\mathcal{H}^{n+1}\left(
\mathbb{Z}(n)_{Zar}
\right)
\end{math}
at $y$, 
\begin{math}
\mathcal{H}^{n+1}(\mathbb{Z}(n)_{Zar})_{y}, 
\end{math}
is equal to
\begin{equation*}
\lim_{\substack{\to\\ U}}\operatorname{H}^{n+1}\left(
\mathbb{Z}(n)_{Zar}(U)
\right)
\end{equation*}
for $y\in X$
where $U$ runs over all open subschemes of $X$
which contain $y$. 

Since $X$ is the spectrum of a local ring and
$x$ is the closed point of $X$, we have

\begin{equation*}
\mathcal{H}^{n+1}\left(
\mathbb{Z}(n)_{Zar}
\right)_{x}
=
\operatorname{H}^{n+1}\left(
\mathbb{Z}(n)_{Zar}(X)
\right)
=
\operatorname{H}^{n+1}_{Zar}\left(
X, \mathbb{Z}(n)
\right)
\end{equation*}
by \cite[p.779, Theorem 3.2 (b)]{Ge}. Moreover, 
\begin{equation*}
\operatorname{H}^{n+1}_{et}\left(X, \mathbb{Z}(n)\right) 
=
\operatorname{H}^{n+1}_{Zar}\left(X, \mathbb{Z}(n)\right)
\end{equation*}
by \cite[p.774, Theorem 1.2.2]{Ge} and \cite{V2}. Therefore 
\begin{equation*}
\operatorname{H}^{n+1}_{et}\left(X, \mathbb{Z}(n)\right)=0 
\end{equation*}
by \cite[p.786, Corollary 4.4]{Ge}. 

Let $m$ be a positive integer. Then we have
a distinguished triangle 
\begin{equation}
\label{disl}
\cdots\to\mathbb{Z}(n)_{et}\xrightarrow{\times m}\mathbb{Z}(n)_{et}\to \mathbb{Z}/m(n)_{et}
\to\cdots.
\end{equation}
Hence the statement follows from \cite[p.774, Theorem 1.2.4]{Ge} and Lemma \ref{LDW}.
\end{proof}
\begin{rem}\upshape
\label{pic}
Let $X$ be the spectrum of a regular local ring. Then

\begin{equation*}
\operatorname{H}^{1}\left(
X_{Zar}, \mathcal{O}_{X}^{*}
\right)=0 
\end{equation*}
by \cite[p.131, II, Proposition 6.2]{Ha} and \cite[p.141, II, Proposition 6.11]{Ha}, 
because a regular local ring is UFD. 

Moreover the canonical map
\begin{math}
\operatorname{H}^{1}\left(
X_{Zar}, \mathcal{O}_{X}^{*}
\right)
\to
\operatorname{H}^{1}\left(
X_{et}, \mathbb{G}_{m}
\right)
\end{math}
is an isomorphism by \cite[p.124, Proposition 4.9]{M}. Hence
\begin{math}
\operatorname{H}^{1}\left(
X_{et}, \mathbb{G}_{m}
\right)=0.
\end{math}

Therefore Property \ref{per} (i-a) follows from the exact sequence
\begin{equation*}
0\to \mu_{l}\to \mathbb{G}_{m}
\xrightarrow{\times l}
\mathbb{G}_{m}
\to 0
\end{equation*}
for an integer $l$ which is prime to $\operatorname{char}(X)$
(\cite[p.66, II. Example 2.18 (b)]{M}).

Let $p$ be a prime integer. Then we have an exact sequence 
\begin{equation}
\label{bl}
0\to 
\mathbb{G}_{m}
\xrightarrow{\times p^{n}}
\mathbb{G}_{m}
\xrightarrow{\operatorname{dlog}}
\operatorname{W\Omega}^{1}_{n, \operatorname{log}}
\to 0,
\end{equation}
in the case where $X$ is a smooth scheme over $\mathbb{F}_{p}$
by \cite[p.580, Proposition 3.23.2]{I}. Since
the sheaves
\begin{math}
\mathbb{G}_{m}
\end{math}
and
\begin{math}
\operatorname{W\Omega}^{1}_{n, \operatorname{log}}
\end{math}
are commute with direct limits,
we have the exact sequence (\ref{bl}) in the case where 
$X$ is the spectrum of a regular local ring with $\operatorname{char}(X)=p>0$
by the theorem of Popescu (\cite[p.90, (2.7) Corollary]{Po}).

Therefore Property \ref{per} (i-b) follows.

\end{rem}
\begin{lem}\upshape
\label{tru}
Let $F: \mathcal{A}\to \mathcal{B}$ be a left exact functor between abelian categories and
let $\mathcal{A}$ be a Grothendieck category. Then
\begin{equation*}
\mathbf{R}^{n+1}F(\tau_{\leq n}A^{\bullet})
=\operatorname{Ker}\Bigl(\mathbf{R}^{n+1}F(A^{\bullet})\to F(\mathcal{H}^{n+1}(A^{\bullet}))
\Bigr)
\end{equation*}
for any complex $A^{\bullet}$.
\end{lem}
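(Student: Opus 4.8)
Lemma \ref{tru}: for a left exact functor $F:\mathcal A\to\mathcal B$ between abelian categories with $\mathcal A$ a Grothendieck category, and for any complex $A^\bullet$,
\[
\mathbf R^{n+1}F(\tau_{\leq n}A^\bullet)=\operatorname{Ker}\Bigl(\mathbf R^{n+1}F(A^\bullet)\to F(\mathcal H^{n+1}(A^\bullet))\Bigr).
\]

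**Plan of proof.** The plan is to use the standard truncation distinguished triangle and the hypercohomology spectral sequence. First I would recall that in $\mathbf D^+(\mathcal A)$ there is a distinguished triangle
\[
\tau_{\leq n}A^\bullet \longrightarrow A^\bullet \longrightarrow \tau_{\geq n+1}A^\bullet \longrightarrow (\tau_{\leq n}A^\bullet)[1].
\]
Applying $\mathbf RF$ and taking the long exact cohomology sequence gives
\[
\mathbf R^{n}F(\tau_{\geq n+1}A^\bullet)\to \mathbf R^{n+1}F(\tau_{\leq n}A^\bullet)\to \mathbf R^{n+1}F(A^\bullet)\xrightarrow{\ \varphi\ } \mathbf R^{n+1}F(\tau_{\geq n+1}A^\bullet).
\]
Since $\tau_{\geq n+1}A^\bullet$ is concentrated in degrees $\geq n+1$, a first-quadrant (hyper)cohomology spectral sequence argument shows $\mathbf R^{n}F(\tau_{\geq n+1}A^\bullet)=0$, so the first map above is zero and hence $\mathbf R^{n+1}F(\tau_{\leq n}A^\bullet)=\operatorname{Ker}\varphi$. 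It remains to identify the target: again because $\tau_{\geq n+1}A^\bullet$ lives in degrees $\geq n+1$, the edge map of its spectral sequence gives $\mathbf R^{n+1}F(\tau_{\geq n+1}A^\bullet)=F(\mathcal H^{n+1}(\tau_{\geq n+1}A^\bullet))=F(\mathcal H^{n+1}(A^\bullet))$, and one checks that under this identification $\varphi$ is the natural map $\mathbf R^{n+1}F(A^\bullet)\to F(\mathcal H^{n+1}(A^\bullet))$. This yields the claim.

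**Where the Grothendieck hypothesis enters.** The role of $\mathcal A$ being a Grothendieck category is to guarantee that $\mathcal A$ has enough injectives and that arbitrary (or at least filtered) direct sums are exact, so that $\mathbf RF$ is defined on all of $\mathbf D^+(\mathcal A)$ and the hyperderived functors are computed by injective (Cartan–Eilenberg) resolutions, with the spectral sequence
\[
E_2^{p,q}=\mathbf R^pF(\mathcal H^q(A^\bullet))\Rightarrow \mathbf R^{p+q}F(A^\bullet)
\]
available; for $\tau_{\geq n+1}A^\bullet$ one has $\mathcal H^q=0$ for $q\leq n$, so the only contribution in total degree $n+1$ is $E_2^{0,n+1}=F(\mathcal H^{n+1}(A^\bullet))$ and the total degree $n$ group vanishes.

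**Main obstacle.** The only delicate point is the compatibility in the last step: verifying that the connecting/edge identifications are compatible with the \emph{canonical} truncation maps, i.e. that the map $\varphi$ really is induced by $A^\bullet\to\tau_{\geq n+1}A^\bullet\to\mathcal H^{n+1}(A^\bullet)[-(n+1)]$ rather than differing by a sign or an automorphism. This is handled by chasing a Cartan–Eilenberg resolution of the truncation triangle and comparing the two spectral sequences via the morphism $A^\bullet\to\tau_{\geq n+1}A^\bullet$, which is the identity on $\mathcal H^{n+1}$; functoriality of the edge maps then forces the identification. Everything else is formal manipulation with truncations and first-quadrant spectral sequences.
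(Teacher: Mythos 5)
Your proof is correct and follows essentially the same route as the paper: apply $\mathbf{R}F$ to the truncation triangle $\tau_{\leq n}A^{\bullet}\to A^{\bullet}\to \tau_{\geq n+1}A^{\bullet}$ and use the hypercohomology spectral sequence for the bounded-below complex $\tau_{\geq n+1}A^{\bullet}$ to get $\mathbf{R}^{n}F(\tau_{\geq n+1}A^{\bullet})=0$ and $\mathbf{R}^{n+1}F(\tau_{\geq n+1}A^{\bullet})\simeq F(\mathcal{H}^{n+1}(A^{\bullet}))$, the identification of the map with the natural one being exactly the naturality of the edge maps noted in the paper's subsequent remark. The only imprecision is your framing in $\mathbf{D}^{+}(\mathcal{A})$: the lemma is for an arbitrary (possibly unbounded, e.g. bounded above like $\mathbb{Z}(n)$) complex, and the Grothendieck hypothesis is there precisely so that $\mathbf{R}F$ and the triangle make sense in the full derived category $\mathbf{D}(\mathcal{A})$ via K-injective resolutions, the spectral sequence then being invoked only for the bounded-below piece $\tau_{\geq n+1}A^{\bullet}$, exactly as you do.
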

\begin{proof}
If $B^{\bullet}$ is bounded below, there is a convergent spectral sequence for 
the hyper-cohomology with 
\begin{equation}
\label{sshc}
\mathbb{R}^{p}F(\mathcal{H}^{q}(B^{\bullet}))\Rightarrow \mathbb{R}^{p+q}F(B^{\bullet})
\end{equation}
and 
\begin{equation*}
\mathbf{R}^{n}F(B^{\bullet})=\mathbb{R}^{n}F(B^{\bullet}). 
\end{equation*}
So 
\begin{align*}
\mathbf{R}^{n}F(\tau_{\geq n+1}A^{\bullet})&=\mathbb{R}^{n}F(\tau_{\geq n+1}A^{\bullet})=0,\\ 
\mathbf{R}^{n+1}F(\tau_{\geq n+1}A^{\bullet})&=\mathbb{R}^{n+1}F(\tau_{\geq n+1}A^{\bullet})\xrightarrow{\sim}
F(\mathcal{H}^{n+1}(A^{\bullet}))
\end{align*}
for any complex $A^{\bullet}$. By a distinguished triangle
\begin{equation*}
\cdots\to\tau_{\leq n}A^{\bullet}\to A^{\bullet}\to \tau_{\geq n+1}A^{\bullet}\to\cdots,
\end{equation*}
we have a distinguished triangle
\begin{equation*}
\cdots\to\mathbf{R}F\left(\tau_{\leq n}A^{\bullet}\right)\to 
\mathbf{R}F\left(A^{\bullet}\right)\to 
\mathbf{R}F\left(\tau_{\geq n+1}A^{\bullet}\right)\to \cdots.
\end{equation*}
Therefore,
\begin{equation*}
\mathbf{R}^{n+1}F(\tau_{\leq n}A^{\bullet})
=\operatorname{Ker}\Bigl(\mathbf{R}^{n+1}F(A^{\bullet})\to F(\mathcal{H}^{n+1}(A^{\bullet}))
\Bigr).
\end{equation*}
\end{proof}
\begin{rem}\upshape
Let $A^{\bullet}$ be a bounded below complex.
Since the edge maps of spectral sequence are natural maps, the morphism
\begin{equation*}
\mathbf{R}^{n+1}F(A^{\bullet})\to \mathbf{R}^{n+1}F(\tau_{\geq n+1}A^{\bullet})
\end{equation*}
corresponds to the edge map of 
\begin{math}
\mathbb{R}^{p}F(\mathcal{H}^{q}(A^{\bullet}))\Rightarrow \mathbb{R}^{p+q}F(A^{\bullet}).
\end{math}
\end{rem}

\begin{prop}\upshape
\label{tru2}
Let $B$ be a regular local ring with $\operatorname{dim}(B)\leq 1$, $\mathfrak{X}$ an essentially of finite type scheme over 
$\spec B$ and $i:Y\to \mathfrak{X}$ a closed
subscheme of codimension $c$ with open complement $j:X\to \mathfrak{X}$.

Suppose that $X$ is an essentially smooth over a regular ring of dimension at most one. 
Then we have a quasi-isomorphism
\begin{equation}\label{puri}
\tau_{\leq n+2}\Bigl(\mathbb{Z}(n-c)_{et}[-2c]\Bigr)
\xrightarrow{\sim} 
\tau_{\leq n+2}\mathbf{R}i^{!}\mathbb{Z}(n)_{et}
\end{equation}
and a quasi-isomorphism
\begin{equation}
\label{quasiil}
\tau_{\leq n+1}\Bigl(\mathbb{Z}/m(n-c)_{et}[-2c]\Bigr)
\xrightarrow{\sim}\tau_{\leq n+1}\mathbf{R}i^{!}\mathbb{Z}/m(n)_{et}
\end{equation}
for any positive integer $m$.

Moreover, if 
\begin{math}
\mathbf{R}^{i}j_{*}\mathbb{Z}(n)_{Zar}=0
\end{math}
for $i\geq n+2$
,we have a distinguished triangle
\begin{equation}\label{diset}
\cdots \to i_{*}\mathbb{Z}(n-c)_{et}[-2c] \to \mathbb{Z}(n)_{et} \to 
\tau_{\leq n+1}\mathbf{R}j_{*}\mathbb{Z}(n)_{et}\to \cdots
\end{equation}

\end{prop}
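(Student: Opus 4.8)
The plan is to prove the three claims of Proposition \ref{tru2} by combining the known absolute purity / Gysin statements for motivic cohomology with the left-exactness truncation machinery of Lemma \ref{tru}. First I would treat the torsion case \eqref{quasiil}. Away from the residue characteristic of $B$, the quasi-isomorphism $\mathbb{Z}/m(n)_{et}\simeq\mu_m^{\otimes n}$ over an essentially smooth base (recalled in \S\ref{defgb}) reduces the statement to the classical absolute purity theorem of Gabber (equivalently \cite{Ga}, \cite{Sh}), which gives $\mathbf{R}i^{!}\mu_m^{\otimes n}\simeq\mu_m^{\otimes(n-c)}[-2c]$ on the nose; at the residue characteristic $p$ of $B$ the corresponding $p$-primary statement is exactly the content of Proposition \ref{Gysin} together with Lemma \ref{LDW}, which identify the Gysin map $\mathbb{Z}/p^r(n-c)_{et}[-2c]\to\mathbf{R}i^{!}\mathbb{Z}/p^r(n)_{et}$ with Shiho's logarithmic Hodge--Witt Gysin map; one then checks that the latter is a quasi-isomorphism in degrees $\le n+1$, either directly from the Bloch--Kato--Gersten resolution or by reducing, via Popescu and the direct-limit remark (Remark \ref{direct}), to the smooth-over-a-perfect-field case where it is known. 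Splitting $m$ into its $p$-part and prime-to-$p$ part and combining the two gives \eqref{quasiil} for arbitrary $m$.

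Next I would deduce the integral statement \eqref{puri} from \eqref{quasiil} by a Bockstein / limit argument. One knows a priori (e.g.\ from the Gersten-type description of the cohomology sheaves of $\mathbb{Z}(n)_{et}$ on an essentially smooth scheme over a Dedekind base, as in \cite{Ge}) that $\mathcal{H}^i(\mathbb{Z}(n)_{et})$ is torsion-free for $i\le 1$ and that the relevant cohomology sheaves of $\mathbf{R}i^{!}\mathbb{Z}(n)_{et}$ are determined, after shifting, by those of $\mathbb{Z}(n-c)_{et}$ in the range we care about. Concretely, the distinguished triangle $\mathbb{Z}(n)\xrightarrow{\times m}\mathbb{Z}(n)\to\mathbb{Z}/m(n)$ applied to both sides of the proposed map, together with \eqref{quasiil} and the vanishing of $\mathcal{H}^i(\mathbb{Z}(n-c)_{et})$ for $i>n-c$, forces the cone of $\mathbb{Z}(n-c)_{et}[-2c]\to\mathbf{R}i^{!}\mathbb{Z}(n)_{et}$ to be uniquely divisible in degrees $\le n+2$; but the cone is also a complex of sheaves whose low-degree cohomology is of geometric origin and in fact torsion, so it vanishes in that range, giving \eqref{puri}. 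The only subtlety is keeping track of the exact degree bounds: one loses one degree in the torsion statement versus the integral statement precisely because of the $F(\mathcal{H}^{n+1})$ correction term appearing in Lemma \ref{tru}, so the $\tau_{\le n+2}$ in \eqref{puri} is the right truncation and $\tau_{\le n+1}$ is the right one in \eqref{quasiil}.

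For the final clause, I would assume $\mathbf{R}^{i}j_{*}\mathbb{Z}(n)_{Zar}=0$ for $i\ge n+2$ and build the localization triangle \eqref{diset}. Start from the tautological triangle $i_*\mathbf{R}i^{!}\mathbb{Z}(n)_{et}\to\mathbb{Z}(n)_{et}\to\mathbf{R}j_*\mathbb{Z}(n)_{et}$. Applying $\tau_{\le n+1}$ and using \eqref{puri} to replace $i_*\mathbf{R}i^{!}\mathbb{Z}(n)_{et}$ by $i_*\mathbb{Z}(n-c)_{et}[-2c]$ in the relevant range — note $i_*\mathbb{Z}(n-c)_{et}[-2c]$ is concentrated in degrees $\ge 2c\ge n$ only after the shift, so one must argue that it already lies in $\tau_{\le n+1}$-range or correct by the truncation — I get a triangle $i_*\mathbb{Z}(n-c)_{et}[-2c]\to\mathbb{Z}(n)_{et}\to\tau_{\le n+1}\mathbf{R}j_*\mathbb{Z}(n)_{et}$ provided the top cohomology sheaf $\mathcal{H}^{n+2}$ of $\mathbf{R}j_*\mathbb{Z}(n)_{et}$ matches what is cut off; the hypothesis $\mathbf{R}^{i}j_*\mathbb{Z}(n)_{Zar}=0$ for $i\ge n+2$, transported to the étale side via the comparison of \cite{Ge}, \cite{V2}, is exactly what guarantees that truncating $\mathbf{R}j_*$ at $n+1$ does not destroy the triangle. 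I expect the main obstacle to be the bookkeeping in this last step: verifying that the truncation functors interact correctly with $i_*$ and $j_*$ so that the octahedral axiom produces an honest distinguished triangle \eqref{diset} rather than merely a triangle in some truncated subcategory, and in particular checking that the vanishing hypothesis is used in precisely the right degree. Everything else — the prime-to-$p$ purity, the $p$-adic purity from Proposition \ref{Gysin}, and the passage from torsion to integral coefficients — is a matter of assembling results already in place.
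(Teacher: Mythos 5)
Your plan inverts the logical order of the paper's argument, and the inversion is where it breaks. The paper proves the \emph{integral} statement (\ref{puri}) first: the top row of its comparison diagram is $\epsilon^{*}$ of the Zariski localization triangle $\mathbb{Z}(n-c)_{Zar}[-2c]\to i^{*}\mathbb{Z}(n)_{Zar}\to i^{*}\mathbf{R}j_{*}\mathbb{Z}(n)_{Zar}$ of \cite[Corollary 3.3(a)]{Ge} (Levine's localization for Bloch's cycle complex over the base), the bottom row is the tautological étale triangle $\mathbf{R}i^{!}\to i^{*}\to i^{*}\mathbf{R}j_{*}$, the two are compared in the range $\tau_{\leq n+1}$ via $\mathbb{Z}(n)_{Zar}\simeq\tau_{\leq n+1}\mathbf{R}\epsilon_{*}\mathbb{Z}(n)_{et}$ (\cite[Theorem 1.2.2]{Ge}), and (\ref{puri}) follows by the five lemma; (\ref{quasiil}) is then a formal consequence of (\ref{puri}). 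You go the other way: prove (\ref{quasiil}) by absolute purity plus Proposition \ref{Gysin}/Lemma \ref{LDW}, then bootstrap to (\ref{puri}) by a divisibility argument. Two concrete gaps result. First, $B$ may be a mixed-characteristic discrete valuation ring and $m$ may be divisible by the residue characteristic $p$; in that case Gabber's purity does not apply ($p$ is not invertible) and Proposition \ref{Gysin} and Lemma \ref{LDW} do not apply either (both require equal characteristic $p$), so your proof of (\ref{quasiil}) does not cover all $m$ allowed by the statement. Second, knowing that the cone $C$ of $\mathbb{Z}(n-c)_{et}[-2c]\to\mathbf{R}i^{!}\mathbb{Z}(n)_{et}$ has vanishing mod-$m$ cohomology in low degrees only yields that $\mathcal{H}^{i}(C)$ is uniquely divisible there; to conclude vanishing you must also kill $C\otimes\mathbb{Q}$, and since étale and Zariski motivic cohomology agree rationally, that rational input \emph{is} the Zariski localization/purity theorem — precisely the ingredient the paper makes central and that your phrase ``of geometric origin and in fact torsion'' assumes without justification. (Incidentally, the claims that $\mathcal{H}^{i}(\mathbb{Z}(n-c)_{et})$ vanishes for $i>n-c$ and that the degree loss comes from the $F(\mathcal{H}^{n+1})$ term of Lemma \ref{tru} are also off: étale cohomology sheaves of $\mathbb{Z}(n-c)$ live up to degree $n-c+1$, and the loss of one degree comes from Tor terms when tensoring the truncated statement with $\mathbb{Z}/m$.)

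For the final clause your route is also not the paper's and is left unresolved at exactly the delicate point you flag: truncating the tautological étale triangle does not automatically produce a distinguished triangle with untruncated middle term. The paper avoids this by applying $\epsilon^{*}$ to the genuine Zariski triangle $i_{*}\mathbb{Z}(n-c)_{Zar}[-2c]\to\mathbb{Z}(n)_{Zar}\to\mathbf{R}j_{*}\mathbb{Z}(n)_{Zar}$, and then using the hypothesis $\mathbf{R}^{i}j_{*}\mathbb{Z}(n)_{Zar}=0$ for $i\geq n+2$ together with the comparison (\ref{Z-ej}) to identify the third term with $\tau_{\leq n+1}\mathbf{R}j_{*}\mathbb{Z}(n)_{et}$; that is where the hypothesis enters, not as a statement about $\mathcal{H}^{n+2}$ of the étale pushforward. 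To repair your write-up you would either have to supply the rational purity and the $p$-primary mixed-characteristic purity independently, or simply adopt the paper's order: Zariski localization plus the Beilinson--Lichtenbaum-type comparison first, torsion coefficients afterwards.
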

\begin{proof}(cf. The proof of \cite[Theorem 1.2.1]{Ge})
We have a quasi-isomorphism
\begin{equation}
\label{Z-ej}
\tau_{\leq n+1}\epsilon^{*}\mathbf{R}j_{*}\mathbb{Z}(n)_{Zar}
\xrightarrow{\sim}
\tau_{\leq n+1}\epsilon^{*}\mathbf{R}j_{*}\mathbf{R}\epsilon_{*}\mathbb{Z}(n)_{et}
\xrightarrow{\sim}
\tau_{\leq n+1}\mathbf{R}j_{*}\mathbb{Z}(n)_{et}
\end{equation}
by \cite[p.774, Theorem 1.2.2]{Ge} (cf. \cite[p.787]{Ge}). Since
\begin{equation*}
\mathbf{R}^{n+2}j_{*}\left(\tau_{\leq n+1}\mathbf{R}\epsilon_{*}\mathbb{Z}(n)_{et}\right)
\to \mathbf{R}^{n+2}j_{*}\left(\mathbf{R}\epsilon_{*}\mathbb{Z}(n)_{et}\right)
\end{equation*}
is an injective, the composite map 
\begin{align*}
&\epsilon^{*}\mathbf{R}^{n+2}j_{*}\mathbb{Z}(n)_{Zar}
\xrightarrow{\sim}
\epsilon^{*}\mathbf{R}^{n+2}j_{*}\left(\tau_{\leq n+1}\mathbf{R}\epsilon_{*}\mathbb{Z}(n)_{et}\right) \\
\hookrightarrow
&\epsilon^{*}\mathbf{R}^{n+2}j_{*}\mathbf{R}\epsilon_{*}\mathbb{Z}(n)_{et}
\xrightarrow{\sim}
\mathbf{R}^{n+2}j_{*}\mathbb{Z}(n)_{et}
\end{align*}
is an injective.
Moreover we have the map of 
distinguished triangles
\begin{equation*}
\begin{CD}
\epsilon^{*}\mathbb{Z}(n-c)_{Zar}[-2c]@>>>\epsilon^{*}i^{*}\mathbb{Z}(n)_{Zar}
@>>> \epsilon^{*}i^{*}\mathbf{R}j_{*}\mathbb{Z}(n)_{Zar} \\
@VVV @| @VVV \\
\mathbf{R}i^{!}\mathbb{Z}(n)_{et}@>>> i^{*}\mathbb{Z}(n)_{et} @>>> 
i^{*}\mathbf{R}j_{*}\mathbb{Z}(n)_{et}.
\end{CD}
\end{equation*}
Therefore we have the quasi-isomorphism (\ref{puri}) by the five-lemma.
Applying $\epsilon^{*}$ to the distinguished triangle of \cite[p.780, Corollary 3.3 (a)]{Ge},
we get a distinguished triangle
\begin{equation*}
i_{*}\mathbb{Z}(n-c)_{et}[-2c]\to \mathbb{Z}(n)_{et}\to \epsilon_{*}\mathbf{R}j_{*}\mathbb{Z}(n)_{Zar}.
\end{equation*}
Hence we have the distinguished triangle (\ref{diset}) by the quasi-isomorphism (\ref{Z-ej}).
The quasi-isomorphism (\ref{quasiil}) follows from the quasi-isomorphism (\ref{puri}).

\end{proof}
\begin{rem}\upshape
If $B$ is a Dedekind ring  and $i$ is the inclusion of one of the
closed fibers, we have the quasi-isomorphisms (\ref{puri}) and (\ref{quasiil})
as \cite[p.774, Theorem 1.2.1]{Ge}.
\end{rem}
\begin{prop}\upshape
\label{GSV}
Let $\mathfrak X$, $Y$ and $X$ be same as above. Suppose that $c=1$.
Then we have a quasi-isomorphism
\begin{equation}
\tau_{\leq n+1}\left(\mathbb{Z}(n)^{\mathfrak{X}}_{Zar}\right)
\xrightarrow{\sim}
\tau_{\leq n+1}\left(\mathbf{R}\epsilon_{*}\mathbb{Z}(n)^{\mathfrak{X}}_{et}\right)
\end{equation}
in the following cases:
\begin{description}
\item[(i)] $\mathfrak{X}$ is a reduced scheme and finite type over a field with 
$\operatorname{dim}(X)=1$,
\item[(ii)] $\mathfrak{X}$ is a regular scheme with $\operatorname{dim}(\mathfrak{X})=2$.
\end{description}
\end{prop}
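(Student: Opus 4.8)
The plan is to deduce both cases from the comparison between Zariski and étale motivic cohomology already established in Proposition \ref{tru2}, namely the quasi-isomorphisms (\ref{puri}) and (\ref{diset}). Write $\epsilon: \mathfrak{X}_{et}\to \mathfrak{X}_{Zar}$ for the change-of-topology map. The key point is that for a scheme which is essentially smooth over a regular ring of dimension at most one, \cite[p.774, Theorem 1.2.2]{Ge} gives $\tau_{\leq n+1}\mathbb{Z}(n)_{Zar}^{X}\xrightarrow{\sim}\tau_{\leq n+1}\mathbf{R}\epsilon_{*}\mathbb{Z}(n)_{et}^{X}$ on the open part $X$; what must be upgraded is that this persists after pushing forward along $j:X\to\mathfrak{X}$ and splicing in the closed stratum $Y$, which has codimension $c=1$. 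So the first step is to apply $\mathbf{R}j_{*}$ to the comparison on $X$ and truncate, obtaining $\tau_{\leq n+1}\mathbf{R}j_{*}\mathbb{Z}(n)_{Zar}^{X}\xrightarrow{\sim}\tau_{\leq n+1}\mathbf{R}j_{*}\mathbb{Z}(n)_{et}^{X}$ (this is essentially (\ref{Z-ej}), valid under the running hypotheses on $X$).

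Next I would invoke the localization triangles on both sites. On the Zariski site, \cite[p.780, Corollary 3.3 (a)]{Ge} gives a triangle relating $i_{*}\mathbb{Z}(n-1)_{Zar}^{Y}[-2]$, $\mathbb{Z}(n)_{Zar}^{\mathfrak{X}}$ and $\mathbf{R}j_{*}\mathbb{Z}(n)_{Zar}^{X}$; on the étale site one has the analogous triangle with $\mathbf{R}i^{!}\mathbb{Z}(n)_{et}$. Comparing these via $\epsilon^{*}$ (or, dually, via $\mathbf{R}\epsilon_{*}$ on the étale triangle) and applying $\tau_{\leq n+1}$, the statement reduces by the five lemma to two inputs: (a) the comparison $\tau_{\leq n+1}\mathbb{Z}(n-1)_{Zar}^{Y}\xrightarrow{\sim}\tau_{\leq n+1}\mathbf{R}\epsilon_{*}\mathbb{Z}(n-1)_{et}^{Y}$ on the closed stratum $Y$ (shifted by $[-2]$), and (b) the vanishing $\mathbf{R}^{i}j_{*}\mathbb{Z}(n)_{Zar}^{X}=0$ for $i\geq n+2$ that is the hypothesis feeding (\ref{diset}) in Proposition \ref{tru2}, so that the truncated pushforward behaves well. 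For (a) one checks that in each of the two cases $Y$ again satisfies a hypothesis of the type needed for \cite[p.774, Theorem 1.2.2]{Ge}: in case (i), $\mathfrak{X}$ finite type over a field with $\dim X=1$ forces $Y$ to be finite type over a field of dimension $0$, i.e.\ a finite product of spectra of fields, for which the comparison is classical; in case (ii), $\mathfrak{X}$ regular of dimension $2$ forces $Y$ to be regular of dimension $\leq 1$, hence essentially smooth over a Dedekind (in fact over $\mathbb{Z}$ or a field) ring, which is exactly the setting where \cite{Ge} applies.

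The main obstacle I anticipate is verifying the degree bound (b), i.e.\ $\mathbf{R}^{i}j_{*}\mathbb{Z}(n)_{Zar}^{X}=0$ for $i\geq n+2$, in the two listed cases. For this I would compute the stalks of $\mathbf{R}^{i}j_{*}\mathbb{Z}(n)_{Zar}^{X}$ at points of $Y$ in terms of Zariski cohomology of punctured local schemes $\spec\mathcal{O}_{\mathfrak{X},y}\setminus\{y\}$, and control these using the coniveau/Gersten resolution of $\mathbb{Z}(n)_{Zar}$ valid on the regular (or smooth-over-a-field) locus: the Gersten complex has length bounded by the dimension, so for $\dim\mathfrak{X}\leq 2$ (case (ii)) or $\dim X=1$ (case (i)) the higher pushforwards vanish in the required range. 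Once (b) is in hand, the distinguished triangle (\ref{diset}) of Proposition \ref{tru2} is available, and assembling it with the étale localization triangle and the five lemma in degrees $\leq n+1$ yields the desired quasi-isomorphism $\tau_{\leq n+1}\mathbb{Z}(n)^{\mathfrak{X}}_{Zar}\xrightarrow{\sim}\tau_{\leq n+1}\mathbf{R}\epsilon_{*}\mathbb{Z}(n)^{\mathfrak{X}}_{et}$ in both cases. A secondary technical point to be careful about is passing between $\epsilon^{*}$-pullback of Zariski triangles and $\mathbf{R}\epsilon_{*}$ of étale triangles; I would handle this exactly as in the proof of Proposition \ref{tru2}, using that $\epsilon^{*}\mathbf{R}\epsilon_{*}\to\mathrm{id}$ is compatible with the truncations in the relevant range.
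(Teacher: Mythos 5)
Your proposal follows essentially the same route as the paper's own proof: compare the Zariski and \'etale localization triangles for $Y\subset\mathfrak{X}$, reduce by the five lemma to the comparison on the closed stratum, and settle that piece by the purity quasi-isomorphism of Proposition \ref{tru2} together with \cite[Theorem 1.2.2]{Ge} applied to $Y$ (a zero-dimensional scheme in case (i), chosen in the paper via resolution of singularities of curves so that $X$, $Y$ satisfy the hypotheses of Proposition \ref{tru2}; a regular scheme of dimension at most one in case (ii)). The only real difference is your explicit worry about the vanishing $\mathbf{R}^{i}j_{*}\mathbb{Z}(n)_{Zar}=0$ for $i\geq n+2$, which the paper sidesteps by working, as in Geisser's argument, with the truncated pushforward and the quasi-isomorphism (\ref{Z-ej}) rather than with the triangle (\ref{diset}).
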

\begin{proof}(cf. The proof of \cite[Theorem 1.2.2]{Ge})
We can prove the statement as \cite[p.774, Theorem 1.2.2]{Ge}. It is sufficient to show that
\begin{equation*}
\tau_{\leq n+2}\left(i_{*}\mathbb{Z}(n-1)[-2]_{Zar}\right)
\to
\tau_{\leq n+2}\left(\mathbf{R}\epsilon_{*}i_{*}\mathbf{R}i^{!}\mathbb{Z}(n)_{et}\right)
\end{equation*}
is a quasi-isomorphism.

We have a quasi-isomorphism
\begin{align*}
&\Bigl(\tau_{\leq n}i_{*}\mathbf{R}\epsilon_{*}\mathbb{Z}(n-1)_{et}\Bigr)[-2] 
\simeq \tau_{\leq n+2}\Bigl(i_{*}\mathbf{R}\epsilon_{*}\mathbb{Z}(n-1)_{et}[-2]\Bigr)\\
\simeq &\tau_{\leq n+2}\Bigl(i_{*}\mathbf{R}\epsilon_{*}\left(\tau_{\leq n+2}\Bigl(\mathbb{Z}(n-1)_{et}[-2]\Bigr)\right)\Bigr)\\
\xrightarrow{\sim}
&\tau_{\leq n+2}\Bigl(i_{*}\mathbf{R}\epsilon_{*}\Bigl(\tau_{\leq n+2}\Bigl(\mathbf{R}i^{!}\mathbb{Z}(n)_{et}\Bigr)\Bigr)\Bigr)
\simeq \tau_{\leq n+2}\Bigl(i_{*}\mathbf{R}\epsilon_{*}\mathbf{R}i^{!}\mathbb{Z}(n)_{et}\Bigr)
\end{align*}
by Proposition \ref{tru2}. 

Suppose that $\mathfrak{X}$ satisfies the condition (i). Then we can choose $X, Y$ as 
satisfying the condition of Proposition \ref{tru2}
by a resolution of singularities of curves.
Hence we have a quasi-isomorphism
\begin{align}
&\tau_{\leq n+2}\Bigl(i_{*}\mathbb{Z}(n-1)_{Zar}[-2]\Bigr)\simeq 
\Bigl(\tau_{\leq n}i_{*}\mathbb{Z}(n-1)_{Zar}\Bigr)[-2] \\
\xrightarrow{\sim}
&\Bigl(\tau_{\leq n}i_{*}\mathbf{R}\epsilon_{*}\mathbb{Z}(n-1)_{et}\Bigr)[-2]
\end{align}
by \cite[Theorem 1.2.2]{Ge}. Therefore the statement follows if $\mathfrak{X}$ satisfies
the condition (i).
\end{proof}
Before we prove that Problem \ref{prob} (iii) is true for some cases, we prove the following lemma.
\begin{lem}\upshape
\label{str}
Let B be a regular ring with $\operatorname{dim}(B)\leq 1$
and X an essentially of finite type over $\spec B$. 
Let $i:Y\to X$ be a closed subscheme of codimension $1$ with open complement
$j: U\to X$. Suppose that $U$ is a regular scheme with
\begin{math}
\operatorname{dim}(U)\leq 1. 
\end{math}

Then we have
\begin{equation*}
\mathbf{R}^{n+2}j_{*}\left(
\mathbb{Z}(n)_{et}
\right)_{\bar{x}}=
\operatorname{H}^{n+1}_{B}\left(
U\times_{X}\spec\mathcal{O}_{X, \bar{x}}
\right)
\end{equation*}
for $x\in X$.
\end{lem}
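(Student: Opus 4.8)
The plan is to compute the stalk of the higher direct image $\mathbf{R}^{n+2}j_{*}(\mathbb{Z}(n)_{et})$ at a geometric point $\bar{x}$ lying over $x \in X$ by the standard identification of stalks of higher direct images with cohomology of the ``local'' open complement. First I would recall that for the inclusion $j: U \to X$ and a geometric point $\bar{x}$ of $X$, the stalk $\mathbf{R}^{q}j_{*}(\mathcal{F})_{\bar{x}}$ is naturally isomorphic to $\operatorname{H}^{q}_{et}(U \times_{X} \spec \mathcal{O}_{X,\bar{x}}, \mathcal{F})$, where $\mathcal{O}_{X,\bar{x}}$ denotes the strict Henselization of $\mathcal{O}_{X,x}$; this applies to the complex $\mathbb{Z}(n)_{et}$ since it is (up to quasi-isomorphism) bounded below, so the hyper-derived pushforward computes honest hypercohomology and its stalks are the hypercohomology of the restricted scheme. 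Thus the left-hand side equals $\operatorname{H}^{n+2}_{et}(U_{\bar{x}}, \mathbb{Z}(n))$, writing $U_{\bar{x}} = U \times_{X} \spec \mathcal{O}_{X,\bar{x}}$.

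Next I would argue that $U_{\bar{x}}$ satisfies the hypotheses needed to invoke Proposition \ref{tru2} and the vanishing of torsion-free contributions. Since $U$ is regular of dimension $\leq 1$ and essentially of finite type over $\spec B$ with $B$ regular of dimension $\leq 1$, the scheme $U_{\bar{x}}$ is essentially smooth over a regular ring of dimension at most one (it is a localization of a scheme essentially smooth over such a ring, at a geometric point of $X$), so Proposition \ref{tru2} applies; in particular the relevant higher Zariski direct images vanish above degree $n+1$, which is exactly the condition ``$\mathbf{R}^{i}j_{*}\mathbb{Z}(n)_{Zar}=0$ for $i \geq n+2$'' needed there. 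The key point is then that, by definition, $\operatorname{H}^{n+1}_{B}(U_{\bar{x}}) = \operatorname{H}^{n+2}_{et}(U_{\bar{x}}, \mathbb{Z}(n))_{tor}$, so it remains to show that $\operatorname{H}^{n+2}_{et}(U_{\bar{x}}, \mathbb{Z}(n))$ is already a torsion group, i.e. that it coincides with its torsion subgroup.

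To see the torsion claim I would use the distinguished triangle $\mathbb{Z}(n)_{et} \xrightarrow{\times m} \mathbb{Z}(n)_{et} \to \mathbb{Z}/m(n)_{et}$ from (\ref{disl}) together with the long exact sequence it induces on étale cohomology of $U_{\bar{x}}$. It suffices to show $\operatorname{H}^{n+2}_{et}(U_{\bar{x}}, \mathbb{Z}(n))$ is divisible-free modulo torsion; equivalently, that $\operatorname{H}^{n+1}_{et}(U_{\bar{x}}, \mathbb{Q}(n))$ maps onto it with torsion cokernel, which follows once one knows $\operatorname{H}^{n+1}_{et}(U_{\bar{x}}, \mathbb{Z}(n))$ and the behaviour in degree $n+2$ of the rational motivic cohomology. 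Here I would exploit that $U_{\bar{x}}$ is a (semi-)local regular scheme essentially smooth over a Dedekind-type base, so by the Gersten-type results already cited (\cite[p.786, Corollary 4.4]{Ge}, \cite[p.779, Theorem 3.2 (b)]{Ge}) the Zariski—hence étale, via \cite[p.774, Theorem 1.2.2]{Ge}—motivic cohomology $\operatorname{H}^{i}_{et}(U_{\bar{x}}, \mathbb{Z}(n))$ vanishes for $i > n$ on the strictly local scheme over $\mathcal{O}_{X,\bar{x}}$ in the case $x \in X$ is not on $Y$, while for $x$ on $Y$ the scheme $U_{\bar{x}}$ is the punctured strictly Henselian local scheme, whose motivic cohomology in degrees $\geq n+1$ is torsion because it is built from residue-field contributions via the localization sequence of Proposition \ref{tru2}. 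The main obstacle I anticipate is precisely this last point: controlling $\operatorname{H}^{n+2}_{et}(U_{\bar{x}}, \mathbb{Z}(n))$ in the boundary case when $\bar x$ specializes into $Y$, where one must combine the truncated purity quasi-isomorphism (\ref{puri}) with the fact that the strict Henselization kills the ``generic'' part, leaving only torsion coming from codimension-one points. Once that is established, the equality $\mathbf{R}^{n+2}j_{*}(\mathbb{Z}(n)_{et})_{\bar{x}} = \operatorname{H}^{n+2}_{et}(U_{\bar{x}}, \mathbb{Z}(n)) = \operatorname{H}^{n+2}_{et}(U_{\bar{x}}, \mathbb{Z}(n))_{tor} = \operatorname{H}^{n+1}_{B}(U_{\bar{x}})$ follows, completing the proof.
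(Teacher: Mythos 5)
Your opening move is where the argument breaks. You justify the stalk formula $\mathbf{R}^{n+2}j_{*}\bigl(\mathbb{Z}(n)_{et}\bigr)_{\bar{x}}\cong \operatorname{H}^{n+2}_{et}\bigl(U\times_{X}\spec\mathcal{O}_{X,\bar{x}}, \mathbb{Z}(n)\bigr)$ by asserting that $\mathbb{Z}(n)_{et}$ is quasi-isomorphic to a bounded below complex. Bloch's complex is bounded \emph{above} (it sits in cohomological degrees $\leq 2n$), and vanishing of its cohomology in negative degrees is not known -- this is essentially Beilinson--Soul\'e type vanishing -- so you cannot invoke the hypercohomology spectral sequence or the continuity argument of \cite[p.88, III, Lemma 1.16]{M} for the integral complex, and the \'etale site here need not have finite cohomological dimension to fall back on. The paper is explicit about this subtlety (see the caveat ``if $t=Zar$ or $A(n)$ is isomorphic to a bounded below complex'' in Section \ref{defgb}), and its proof of Lemma \ref{str} is organized precisely to avoid your step: the naive stalk formula is proved only for $\mathbb{Z}/m(n)_{et}$ (bounded below) in degree $n+1$, the stalk of $\mathbf{R}^{n+1}j_{*}\mathbb{Z}(n)_{et}$ is computed by passing through the Zariski topology via the quasi-isomorphism (\ref{Z-ej}) together with \cite[p.779, Theorem 3.2 b)]{Ge} and Remark \ref{direct}, and degree $n+2$ is then reached only through torsion, via the multiplication-by-$m$ triangle (\ref{disl}). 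That is exactly why the right-hand side of the lemma is the torsion group $\operatorname{H}^{n+1}_{B}\bigl(U\times_{X}\spec\mathcal{O}_{X,\bar{x}}\bigr)$ and not the full group $\operatorname{H}^{n+2}_{et}\bigl(U\times_{X}\spec\mathcal{O}_{X,\bar{x}}, \mathbb{Z}(n)\bigr)$, which your route would additionally have to prove is torsion.

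Even granting your stalk formula, the remaining steps are not secured. The torsion claim, which you yourself flag as the main anticipated obstacle, is only sketched, and the sketch rests on two misreadings: you upgrade the hypothesis ``$U$ is regular with $\operatorname{dim}(U)\leq 1$'' to ``$U\times_{X}\spec\mathcal{O}_{X,\bar{x}}$ is essentially smooth over a regular ring of dimension at most one'' (regularity and essential finite type over $B$ do not give essential smoothness, and both \cite[p.786, Corollary 4.4]{Ge} and Proposition \ref{tru2} are being invoked under that stronger hypothesis), and you treat the condition $\mathbf{R}^{i}j_{*}\mathbb{Z}(n)_{Zar}=0$ for $i\geq n+2$ as an output of Proposition \ref{tru2}, whereas there it is an extra assumption needed for the triangle (\ref{diset}), not a conclusion. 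So as written the proposal establishes neither the identification of the stalk in degree $n+2$ nor the passage to the torsion subgroup; these are precisely the two points the paper's $\mathbb{Z}/m$-and-Zariski detour is designed to handle.
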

\begin{proof}

Since 
\begin{math}
\mathbb{Z}/m(n)_{et}^{U} 
\end{math}
is bounded below for any positive integer $m$, 
we have a spectral sequence
\begin{equation*}
\mathbf{R}^{p}j_{*}\left(
\mathcal{H}^{q}
(\mathbb{Z}/m(n)_{et})
\right)
\Rightarrow
\mathbf{R}^{p+q}j_{*}\mathbb{Z}/m(n).
\end{equation*}
Moreover Bloch's cycle complex $\mathbb{Z}(n)$
commutes with direct limits by Remark \ref{direct},
so $\mathbb{Z}/m(n)_{et}$ does.

Hence we have
\begin{equation*}
\left(
\mathbf{R}^{n+1}j_{*}\mathbb{Z}/m(n)_{et}
\right)_{\bar{x}}
=
\operatorname{H}^{n+1}_{et}\left(
U\times_{X}\mathcal{O}_{X, \bar{x}}, \mathbb{Z}/m(n)
\right)
\end{equation*}
by \cite[p.88, III, Lemma 1.16]{M}. Moreover we have
\begin{align*}
\mathbf{R}^{n+1}j_{*}\left(
\mathbb{Z}(n)_{et}
\right)_{\bar{x}}&=
\left(\epsilon^{*}\mathbf{R}^{n+1}j_{*}
\mathbb{Z}(n)_{Zar}
\right)_{\bar{x}}=
\operatorname{H}^{n+1}_{Zar}\left(
U\times_{X}\spec\mathcal{O}_{X, \bar{x}}, \mathbb{Z}(n)
\right)\\
&=
\operatorname{H}^{n+1}_{et}\left(
U\times_{X}\spec\mathcal{O}_{X, \bar{x}}, \mathbb{Z}(n)
\right)
\end{align*}
by the quasi-isomorphism (\ref{Z-ej}) and \cite[p.779, Theorem 3.2 b)]{Ge}. 
Therefore the statement follows.
\end{proof}
\begin{prop}\upshape
\label{supp}
Let $X$ be a regular integral scheme with $\operatorname{dim}(X)$=1. Then Problem \ref{prob}
(iii) is true for $X$.
\end{prop}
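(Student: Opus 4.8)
The plan is to obtain the sequence from the localization triangle for Bloch's cycle complex on the one-dimensional regular scheme $X$, combined with the Beilinson--Lichtenbaum vanishing over fields and the vanishing of $\operatorname{H}^{i}_{\operatorname{B}}$ of a strictly Henselian local ring. Write $n=i-1$, let $\eta=\spec K$ be the generic point, and note that $X^{(1)}=X_{(0)}$ is the set of closed points, each $\mathcal{O}_{X,x}$ ($x$ closed) being a discrete valuation ring.

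I would first record two facts about any field $F$, both consequences of the quasi-isomorphism $\mathbb{Z}(m)_{Zar}\simeq\tau_{\le m+1}\mathbb{Z}(m)_{et}$ (\cite[p.774, Theorem 1.2.2]{Ge}, \cite{V2}) and of the vanishing of Zariski and rational motivic cohomology of a field above its weight: $\operatorname{H}^{m+1}_{et}(F,\mathbb{Z}(m))=0$, and $\operatorname{H}^{q}_{et}(F,\mathbb{Z}(m))$ is torsion for $q>m$. Thus $\operatorname{H}^{n}_{et}(\kappa(x),\mathbb{Z}(n-1))=0$, and $\operatorname{H}^{n+2}_{et}(K,\mathbb{Z}(n))=\operatorname{H}^{i}_{\operatorname{B}}(K)$, $\operatorname{H}^{n+2}_{et}(K_{\bar x},\mathbb{Z}(n))=\operatorname{H}^{i}_{\operatorname{B}}(K_{\bar x})$, $\operatorname{H}^{n+1}_{et}(\kappa(x),\mathbb{Z}(n-1))=\operatorname{H}^{i-1}_{\operatorname{B}}(\kappa(x))$. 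I also need $\operatorname{H}^{i}_{\operatorname{B}}(\mathcal{O}_{X,\bar x})=0$: Problem \ref{prob}~(i) holds for the strictly Henselian ring $\mathcal{O}_{X,\bar x}$ by Proposition \ref{leD} together with Remark \ref{direct} (it is a filtered colimit of local rings of schemes essentially smooth over the Dedekind ring $\mathcal{O}_{X,x}$), and the étale cohomology of a strictly Henselian local ring vanishes in positive degrees.

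The core step is the localization sequence. Fix a finite set $Y$ of closed points, $j\colon U=X\setminus Y\hookrightarrow X$ the open complement, $i\colon Y\hookrightarrow X$. Near each closed point $\spec\mathcal{O}_{X,x}$ is essentially of finite type over the Dedekind ring $\mathcal{O}_{X,x}$, and the stalks of $\mathbf{R}^{q}j_{*}\mathbb{Z}(n)_{Zar}$ are $0$ or $\operatorname{H}^{q}_{Zar}(K,\mathbb{Z}(n))$, hence vanish for $q>n$; so Proposition \ref{tru2} gives on $X_{et}$ the distinguished triangle
\[
i_{*}\mathbb{Z}(n-1)_{et}[-2]\to\mathbb{Z}(n)_{et}\to\tau_{\le n+1}\mathbf{R}j_{*}\mathbb{Z}(n)_{et}\to .
\]
Taking $\operatorname{H}^{*}_{et}(X,-)$ in degree $n+2$ and using $\operatorname{H}^{n}_{et}(Y,\mathbb{Z}(n-1))=0$ yields
\[
0\to\operatorname{H}^{n+2}_{et}(X,\mathbb{Z}(n))\to\operatorname{H}^{n+2}\!\bigl(X,\tau_{\le n+1}\mathbf{R}j_{*}\mathbb{Z}(n)_{et}\bigr)\xrightarrow{\ \delta_{Y}\ }\textstyle\bigoplus_{y\in Y}\operatorname{H}^{n+1}_{et}(\kappa(y),\mathbb{Z}(n-1)) .
\]
Comparing $\tau_{\le n+1}\mathbf{R}j_{*}$ with $\mathbf{R}j_{*}$ and computing $\mathbf{R}^{n+2}j_{*}\mathbb{Z}(n)_{et}$ via Lemma \ref{str} (it is supported on $Y$, with stalk $\operatorname{H}^{n+2}_{et}(K_{\bar y},\mathbb{Z}(n))$ at $y\in Y$ and stalk $\operatorname{H}^{i}_{\operatorname{B}}(\mathcal{O}_{X,\bar x})=0$ at $x\in U$) shows that $\operatorname{H}^{n+2}(X,\tau_{\le n+1}\mathbf{R}j_{*}\mathbb{Z}(n)_{et})$ is the kernel of the restriction $\operatorname{H}^{n+2}_{et}(U,\mathbb{Z}(n))\to\bigoplus_{y\in Y}\operatorname{H}^{n+2}_{et}(K_{\bar y},\mathbb{Z}(n))$. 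Combining the two identifies $\operatorname{H}^{n+2}_{et}(X,\mathbb{Z}(n))$ with $\operatorname{Ker}(\operatorname{Res})\cap\operatorname{Ker}(\delta_{Y})$ inside $\operatorname{H}^{n+2}_{et}(U,\mathbb{Z}(n))$.

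Finally I would let $Y$ exhaust the closed points, so $U\downarrow\eta$; since Bloch's cycle complex commutes with the resulting filtered colimits (Remark \ref{direct}) and filtered colimits are exact, the sequences assemble, and $\operatorname{H}^{n+2}_{et}(X,\mathbb{Z}(n))$ (being a subgroup of $\operatorname{H}^{n+2}_{et}(K,\mathbb{Z}(n))$, which is torsion) equals $\operatorname{H}^{i}_{\operatorname{B}}(X)$, so by the second paragraph the limit is
\[
0\to\operatorname{H}^{i}_{\operatorname{B}}(X)\to\operatorname{Ker}\!\Bigl(\operatorname{H}^{i}_{\operatorname{B}}(K)\to\textstyle\bigoplus_{x\in X^{(1)}}\operatorname{H}^{i}_{\operatorname{B}}(K_{\bar x})\Bigr)\xrightarrow{\ \delta\ }\bigoplus_{x\in X^{(1)}}\operatorname{H}^{i-1}_{\operatorname{B}}(\kappa(x)) .
\]
Moreover every class of $\operatorname{H}^{i}_{\operatorname{B}}(K)$ comes from some $U$ and hence restricts to $0$ in $\operatorname{H}^{i}_{\operatorname{B}}(K_{\bar x})$ for $x\in U$ (the restriction factors through $\operatorname{H}^{i}_{\operatorname{B}}(\mathcal{O}_{X,\bar x})=0$), so the kernel of $\operatorname{H}^{i}_{\operatorname{B}}(K)\to\bigoplus_{x}\operatorname{H}^{i}_{\operatorname{B}}(K_{\bar x})$ equals the kernel of $\operatorname{H}^{i}_{\operatorname{B}}(K)\to\prod_{x}\operatorname{H}^{i}_{\operatorname{B}}(K_{\bar x})$; this is the sequence of Problem \ref{prob}~(iii) once $\delta$ is identified with the residue map used there. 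I expect the main obstacle to be exactly that identification, together with the bookkeeping in the third paragraph: checking that Proposition \ref{tru2} and Lemma \ref{str} apply here — they are stated for schemes essentially of finite type over a regular base of dimension $\le 1$, which is why one works through the local rings $\mathcal{O}_{X,x}$ — and tracking the edge maps. The degenerate case $n=0$, where $\mathbb{Z}(n-1)=0$, causes no difficulty and recovers the classical fact that an everywhere-unramified character of $K$ extends to $X$.
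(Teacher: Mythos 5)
Your proposal follows essentially the same route as the paper's proof: the localization triangle of Proposition \ref{tru2}, the identification of $\operatorname{H}^{n+2}_{et}\bigl(X,\tau_{\leq n+1}\mathbf{R}j_{*}\mathbb{Z}(n)\bigr)$ as a kernel via the stalk computation of Lemma \ref{str} (with Lemma \ref{tru} and the embedding of a sheaf into the product of its stalks), the vanishing $\operatorname{H}^{n}_{et}(\kappa(x),\mathbb{Z}(n-1))=0$ for injectivity, and the passage to the colimit over open subsets $U$ to reach $k(X)$. Your extra care with $\operatorname{H}^{i}_{\operatorname{B}}(\mathcal{O}_{X,\bar{x}})=0$ at points of $U$ and with $\bigoplus$ versus $\prod$ only makes explicit what the paper leaves implicit, so the argument is correct and not a genuinely different proof.
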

\begin{proof}
We have
\begin{equation*}
\operatorname{H}^{n}_{et}\left(
\spec k(X), \mathbb{Z}(n-1)
\right)
=\lim_{\substack{\to\\U}}\operatorname{H}^{n}_{et}\left(
U, \mathbb{Z}(n-1)
\right)
\end{equation*}
by \cite[p.774, Theorem 1.2.2]{Ge} and \cite[p.779, Theorem 3.2 b)]{Ge}.
Here the limit is taken over any open subset of $X$.

Moreover we have
\begin{equation*}
\operatorname{H}^{n}_{et}\left(
\spec k(X), \mathbb{Z}/m(n-1)
\right)
=\lim_{\substack{\to\\U}}
\operatorname{H}^{n}_{et}\left(U, \mathbb{Z}/m(n-1)
\right)
\end{equation*}
for any positive integer $m$ as the proof of Lemma \ref{str}.
Hence we have
\begin{align}\label{limU}
\operatorname{H}^{n+1}_{et}\left(\spec k(X), \mathbb{Z}(n-1)\right)_{m}
&=\lim_{\substack{\to\\U}}\operatorname{H}^{n+1}_{et}\left(U, \mathbb{Z}(n-1)
\right)_{m}
\end{align}
by the distinguished triangle (\ref{disl}). 

Let $k$ be a field. Then we have
\begin{equation*}
\operatorname{H}^{n}_{et}\left(
\spec k, \mathbb{Z}(n-1)
\right)=
\operatorname{H}^{n}_{Zar}\left(
\spec k, \mathbb{Z}(n-1)
\right)
=
\operatorname{H}_{n-2}\left(
z^{n-1}(\spec k, *)
\right)
=0
\end{equation*}
by \cite[p.774, Theorem 1.2.2]{Ge} and \cite[p.779, Theorem 3.2 b)]{Ge}. Hence the natural 
homomorphism

\begin{equation*}
\operatorname{H}^{n}_{B}\left(X\right)\to
\operatorname{H}^{n}_{B}(U)
\end{equation*}
is an injective by Proposition \ref{tru2}. Therefore
\begin{equation*}
\operatorname{H}^{n}_{B}(X)\to
\operatorname{H}^{n}_{B}\left(k(X)\right)
\end{equation*}
is an injective by (\ref{limU}).

For each $x\in X$, we choose a geometric point 
$u_{x}: \bar{x}\to X$. Then 
\begin{math}
F\to \displaystyle\prod_{x\in X} (u_{x})_{*}(u_{x})^{*}F 
\end{math}
is an injective for a sheaf on $X_{et}$ (\cite[p.90, III, Remark 1.20 (c)]{M}). So
\begin{align*}
&\operatorname{H}^{n+2}_{et}\left(
X, \tau_{\leq n+1}\mathbf{R}j_{*}\mathbb{Z}(n)
\right) \\
=&\operatorname{Ker}\Bigl(
\operatorname{H}^{n+2}_{et}\left(
U, \mathbb{Z}(n)
\right)\to 
\prod_{x\in X}\left(
\mathbf{R}^{n+2}j_{*}\mathbb{Z}(n)
\right)_{\bar{x}}
\Bigr)\\
=&\operatorname{Ker}\Bigl(
\operatorname{H}^{n+2}_{et}\left(
U, \mathbb{Z}(n)
\right)\to
\prod_{x\in X}
\operatorname{H}^{n+2}_{et}\left(
k(\mathcal{O}_{X, \bar{x}}), \mathbb{Z}(n)
\right)
\Bigr)
\end{align*}
for any open immersion $j: U\to X$. Therefore the statement follows.

\end{proof}
\begin{rem}\upshape
Let 
\begin{math}
E^{l, m}_{2}\Rightarrow E^{l+m}
\end{math}
be a spectaral sequence. Suppose that $E^{l, m}_{2}=0$ for $l<0$ or $m<0$.
Then we can define the morphism
\begin{equation}\label{spe}
\operatorname{Ker}\left(E^{n}\to E^{0, n}\right)\simeq E^{n}_{1}
\twoheadrightarrow E^{n}_{1}/E^{n}_{2}\simeq
E^{1, n-1}_{\infty}\hookrightarrow E^{1, n-1}_{2}.
\end{equation}
Moreover, the morphism (\ref{spe}) is an isomorphism if $E^{l, m}_{2}=0$ for $l\geq 2$.

Let $(A, \mathfrak{m}, k)$ be a discrete valuation ring. 
Let $j: \spec K\to \spec A$ be the generic point and
$i: \spec k\to \spec A$ the closed immersion. 

For the above,
\begin{align*}
&\operatorname{Ker}\Bigl(\operatorname{H}^{n+1}_{et}(\spec K, \mathbb{Z}/p(n))
\to \operatorname{H}^{n+1}_{et}(\spec K_{\bar{\mathfrak m}}, \mathbb{Z}/p(n))\Bigr)\\
=&\operatorname{H}^{1}_{et}\left(
\spec A, \mathbf{R}^{n}j_{*}\mathbb{Z}/p(n)
\right)
=\operatorname{H}^{1}_{et}\left(
\spec A, i_{*}i^{*}\mathbf{R}^{n}j_{*}\mathbb{Z}/p(n)
\right) \\
=&\operatorname{H}^{1}\left(
\operatorname{Gal}(k_{s}/k), 
\operatorname{H}^{n}_{et}(\spec K_{\bar{\mathfrak m}}, \mathbb{Z}/p(n))
\right)
\end{align*}
for $\operatorname{char}(k)=p>0$ and $n>0$ where $k_{s}$ is the separable closure
of $k$ and $K_{\bar{\mathfrak{m}}}$ is the maximal unramified extension of $K$.

We have the commutative diagram
\begin{equation}
\begin{CD}
@. 
\operatorname{H}^{n+1}_{et}
\left(
K, \mathbb{Z}/p(n)
\right)
\\
@. @|
\\
\operatorname{H}^{1}_{et}
\left(
X, \mathbf{R}^{n}j_{*}\mathbb{Z}/p(n)
\right)
@>>>
\operatorname{H}^{n+1}_{et}
\left(
X, \mathbf{R}j_{*}\mathbb{Z}/p(n)
\right)
\\
@VVV @VVV \\
\operatorname{H}^{1}_{et}
\left(
X, i_{*}\mathbf{R}^{n+1}i^{!}\mathbb{Z}/p(n)
\right)
@>>>
\operatorname{H}^{n+1}
\left(
X_{et}, i_{*}\mathbf{R}i^{!}\mathbb{Z}/p(n)[+1]
\right)\\
@. @| \\
@.
\operatorname{H}^{n+2}_{\mathfrak{m}}\left(
K, \mathbb{Z}/p(n)
\right)
\end{CD}
\end{equation}
where the horizontal maps are given by spectral sequences

\begin{equation*}
\operatorname{H}^{l}_{et}\left(
X, \mathbf{R}^{m}j_{*}\mathbb{Z}/p(n)
\right)
\Rightarrow
\operatorname{H}^{l+m}_{et}\left(
K, \mathbb{Z}/p(n)
\right)
\end{equation*}
and

\begin{equation*}
\operatorname{H}^{l}_{et}\left(
X, i_{*}\mathbf{R}^{m}i^{!}\mathbb{Z}/p(n-1)
\right)
\Rightarrow
\operatorname{H}^{l+m}_{\mathfrak{m}}\left(
K, \mathbb{Z}/p(n)
\right).
\end{equation*}
%


Moreover, we have the commutative diagram
\begin{equation*}
\begin{CD}
\epsilon^{*}i^{*}\mathbf{R}j_{*}\mathbb{Z}/p(n)_{Zar}
@>>>
\epsilon^{*}\mathbb{Z}/p(n-1)_{Zar}[-1] \\
@VVV @VVV  \\
i^{*}\tau_{\leq n}\left(
\mathbf{R}j_{*}\mathbb{Z}/p(n)_{et}
\right)
@>>>
\tau_{\leq n+1}\left(
\mathbf{R}i^{!}\mathbb{Z}/p(n)_{et}
\right)
[+1]
\end{CD}
\end{equation*}
where the vertical maps are quasi-isomorphism and the 
homomorphism
\begin{equation*}
\left(
\epsilon^{*}i^{*}\mathcal{H}^{n}\left(
\mathbf{R}j_{*}\mathbb{Z}/p(n)_{Zar}
\right)
\right)_{\bar{x}}
\to
\mathcal{H}^{n-1}\left(
\epsilon^{*}\mathbb{Z}/p(n-1)_{Zar}
\right)_{\bar{x}}
\end{equation*}
agrees with the symbol map
\begin{math}
K^{n}_{M}(
K_{\bar{\mathfrak{m}}}
)/p
\to
K^{n-1}_{M}(
k_{s})
/p
\end{math}
by \cite[Lemma 3.2]{Ge-L}.

Therefore the homomorphism
\begin{equation*}
\operatorname{H}^{n+1}_{et}\left(
X, \tau_{\leq n}\left(
\mathbf{R}j_{*}\mathbb{Z}/p(n)
\right)
\right)
\to
\operatorname{H}^{n}_{et}\left(
X, i_{*}\left(
\mathbb{Z}/p(n-1)
\right)
\right)
\end{equation*}
which is induced by the map
\begin{equation*}
\tau_{\leq n}\left(
\mathbf{R}j_{*}\mathbb{Z}/p(n)_{et}
\right)
\to
i_{*}\left(
\mathbb{Z}/p(n-1)_{et}
\right)[-1]
\end{equation*}
agrees with the homomorphism
\begin{equation}\label{Ka}
\operatorname{H}^{1}\left(
\operatorname{Gal}(k_{s}/k),
K^{n}_{M}(K_{\bar{\mathfrak{m}}})
/p
\right)
\to
\operatorname{H}^{1}\left(
\operatorname{Gal}(k_{s}/k),
K^{n-1}_{M}(k_{s})/p
\right)
\end{equation}
which is induced by the symbol map. 

In the case where $[k: k^{p}]\leq n-1$, the homomorphism (\ref{Ka}) is defined by K.Kato (\cite[p.150, \S1, (1.3) (ii)]{K}).
\end{rem}
\begin{prop}\upshape
\label{rl2}
Let $B$ be a regular local ring with $\operatorname{dim}(B)\leq 1$.
Let $\spec A$ be a regular local scheme with $\operatorname{dim}(A)=2$
and a localization of finite type over $\spec B$.

Then Problem \ref{prob} (iii) is true for $\spec A$.
\end{prop}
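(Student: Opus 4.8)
The plan is to deduce the exactness of the sequence in Problem \ref{prob} (iii) for $\spec A$ from the local structure of $\mathbb{Z}(n)_{et}$ developed in Propositions \ref{tru2} and \ref{GSV}, combined with the one-dimensional case already handled in Proposition \ref{supp}. Write $K=k(A)$, and for each prime $\mathfrak{p}\in\spec A^{(1)}$ let $i_{\mathfrak{p}}:\spec\kappa(\mathfrak{p})\to\spec A$ be the corresponding codimension-one closed immersion and $j:\spec K\to\spec A$ the generic point. Since $\spec A$ is regular of dimension $2$, Proposition \ref{GSV} (ii) gives a quasi-isomorphism $\tau_{\leq n+1}\mathbb{Z}(n)_{Zar}\xrightarrow{\sim}\tau_{\leq n+1}\mathbf{R}\epsilon_{*}\mathbb{Z}(n)_{et}$, and Proposition \ref{tru2} (applied with $Y$ the union of the height-one primes, after passing to the relevant open strata) gives the localization triangle relating $\mathbb{Z}(n)_{et}$ to $i_{*}\mathbb{Z}(n-1)_{et}[-2]$ and $\tau_{\leq n+1}\mathbf{R}j_{*}\mathbb{Z}(n)_{et}$. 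The first step is therefore to verify that the hypothesis $\mathbf{R}^{i}j_{*}\mathbb{Z}(n)_{Zar}=0$ for $i\geq n+2$ holds in our situation, so that the distinguished triangle (\ref{diset}) is available; this follows because the generic fiber and the punctured spectrum are essentially smooth of dimension $\leq 1$ over a regular base of dimension $\leq 1$, where the relevant vanishing is known by Geisser's results.

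Next I would take torsion (i.e.\ extract the $m$-torsion subgroup for each $m$, then pass to the limit over all $m$) in the long exact cohomology sequence attached to the triangle (\ref{diset}). The key input is that $\operatorname{H}^{n+1}_{et}(\spec A,\mathbb{Z}(n))=\operatorname{H}^{n+1}_{et}(\spec K,\mathbb{Z}(n))=0$ and $\operatorname{H}^{n}_{et}(\spec\kappa(\mathfrak{p}),\mathbb{Z}(n-1))=0$ — the vanishing over a field being exactly the Nesterenko--Suslin/Geisser computation invoked in the proof of Proposition \ref{supp}. This identifies $\operatorname{H}^{n}_{\operatorname{B}}(\spec A)=\operatorname{H}^{n+1}_{et}(\spec A,\mathbb{Z}(n-1))_{tor}$ with $\operatorname{Ker}\bigl(\operatorname{H}^{n+1}_{et}(\spec K,\mathbb{Z}(n-1))_{tor}\to\bigoplus_{\mathfrak{p}}\operatorname{H}^{n+2}_{\mathfrak{p}}(\dots)\bigr)$ and, via the Gysin identification $\operatorname{H}^{n+2}_{\mathfrak{p}}(\spec A,\mathbb{Z}(n-1))_{tor}\cong\operatorname{H}^{n}_{et}(\spec\kappa(\mathfrak{p}),\mathbb{Z}(n-2))_{tor}=\operatorname{H}^{n-1}_{\operatorname{B}}(\kappa(\mathfrak{p}))$ coming from the purity triangle (\ref{puri}), produces the residue maps and the tail $\bigoplus_{\mathfrak{p}}\operatorname{H}^{n-1}_{\operatorname{B}}(\kappa(\mathfrak{p}))$ of the sought sequence.

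The remaining point is to replace the naive kernel over $K$ by the kernel relative to the strictly Henselian local fields $K_{\bar{x}}$ appearing in the statement of Problem \ref{prob} (iii). For this I would argue exactly as in the last paragraph of the proof of Proposition \ref{supp}: using the local computation of Lemma \ref{str}, the stalk $(\mathbf{R}^{n+2}j_{*}\mathbb{Z}(n-1)_{et})_{\bar{x}}$ at a geometric point over $x$ equals $\operatorname{H}^{n+1}_{\operatorname{B}}$ of the punctured strictly Henselian local scheme, and the sheaf-theoretic injectivity $F\hookrightarrow\prod_{x}(u_{x})_{*}(u_{x})^{*}F$ shows that $\operatorname{H}^{n+1}_{et}(\spec A,\tau_{\leq n+1}\mathbf{R}j_{*}\mathbb{Z}(n-1))$ is precisely the kernel of $\operatorname{H}^{n+1}_{et}(\spec K,\mathbb{Z}(n-1))\to\prod_{x}\operatorname{H}^{n+1}_{et}(K_{\bar{x}},\mathbb{Z}(n-1))$ — and restricting to torsion gives the condition on $\operatorname{H}^{i}_{\operatorname{B}}(K_{\bar{x}})$ in the statement. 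Assembling these identifications over the triangle (\ref{diset}) yields the exact sequence. I expect the main obstacle to be bookkeeping the truncation levels and the torsion functor simultaneously: one must check that taking $(-)_{tor}$ commutes with the relevant pieces of the long exact sequence (which is where the vanishing results above are essential, since they kill the potentially non-torsion terms $\operatorname{H}^{n+2}_{et}(\spec A,\mathbb{Z}(n-1))$), and that the codimension-two locus — the closed point of $\spec A$ — contributes nothing, which is guaranteed because $\dim A=2$ forces the relevant higher local cohomology with supports to land in a range already controlled by the truncation $\tau_{\leq n+1}$.
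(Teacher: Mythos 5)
Your proposal draws on the same toolkit as the paper (the localization/purity apparatus of Proposition \ref{tru2}, the stalk computation of Lemma \ref{str}, the one-dimensional case Proposition \ref{supp}, and the vanishing of motivic cohomology of fields above the weight), but there is a genuine gap at exactly the point you flag as the main obstacle. Your dévissage runs along ``the union of the height-one primes'' of $\spec A$: this is not a closed subscheme, finite unions $\spec A/(f_{1}\cdots f_{r})$ (and even a single $\spec A/(f)$ with $f$ irreducible) need not be regular, so the purity quasi-isomorphism (\ref{puri}) is not available for these supports as you use it; and the identification $\operatorname{H}^{n+2}_{\mathfrak{p}}(\spec A,\mathbb{Z}(n-1))_{tor}\cong\operatorname{H}^{n-1}_{\operatorname{B}}(\kappa(\mathfrak{p}))$ is invoked in degree $n+2$, which lies \emph{outside} the truncation range $\tau_{\leq n+1}$ of (\ref{puri}) at weight $n-1$. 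For the same reason your final claim --- that the closed point ``contributes nothing'' because the relevant local cohomology ``lands in a range already controlled by the truncation $\tau_{\leq n+1}$'' --- is false in the decisive degree: the potentially troublesome term is precisely the first one the truncation does not control, and it cannot simply vanish, since it is the source of the strict-henselization condition at $\mathfrak{m}$ in the middle term of the sequence. Handling this boundary degree requires an extra device (Lemma \ref{tru}, giving the kernel description of cohomology of a truncated complex, or the vanishing of the in-range field term $\operatorname{H}^{n-1}(\kappa(\mathfrak{m}),\mathbb{Z}(n-2))$), not the truncation itself; there are also smaller slips (the vanishing inputs are quoted at weight $n$ while you work at weight $n-1$, and for the generic-point $j$ the stalk at $\bar{\mathfrak{m}}$ is the cohomology of $K_{\bar{\mathfrak{m}}}$, not of a punctured strictly Henselian local scheme).

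The paper avoids the direct codimension-one dévissage on the two-dimensional scheme altogether. It first proves injectivity of $\operatorname{H}^{n+2}_{et}(A,\mathbb{Z}(n))\to\operatorname{H}^{n+2}_{et}(k(A),\mathbb{Z}(n))$ using \emph{regular} hypersurfaces $\spec A/(f)$, and then splits off only the closed point: the triangle (\ref{diset}) for $j\colon U\to\spec A$ with $U$ the punctured spectrum, together with $\operatorname{H}^{n-1}(\kappa(\mathfrak{m}),\mathbb{Z}(n-2))=0$, gives a surjection $\operatorname{H}^{n+2}_{et}(A,\mathbb{Z}(n))\twoheadrightarrow\operatorname{H}^{n+2}_{et}\bigl(A,\tau_{\leq n+1}\mathbf{R}j_{*}\mathbb{Z}(n)\bigr)=\operatorname{Ker}\bigl(\operatorname{H}^{n+2}_{et}(U,\mathbb{Z}(n))\to\Gamma(X,\mathbf{R}^{n+2}j_{*}\mathbb{Z}(n))\bigr)$, and the residues at height-one primes together with the $K_{\bar{x}}$-conditions are then supplied by the already-proved one-dimensional case, Proposition \ref{supp}, applied to the regular one-dimensional scheme $U$, with Lemma \ref{str} identifying the stalk at the closed point. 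If you reorganize your argument in this two-step way --- closed point first, then quote Proposition \ref{supp} for $U$ --- the truncation/torsion bookkeeping is exactly what Lemma \ref{tru} and the field vanishings are for, and the problematic purity claim at (possibly singular) codimension-one supports of the two-dimensional scheme is never needed.
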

\begin{proof}
Let $\spec A/(f)\to \spec A$ be a regular closed subscheme of codimension $1$.
Then the morphism
\begin{equation*}
\operatorname{H}^{n+2}_{et}\left(A, \mathbb{Z}(n)\right)\to 
\operatorname{H}^{n+2}_{et}\left(A_{f}, \mathbb{Z}(n)\right)
\end{equation*}
is an injective. Hence
\begin{equation*}
\operatorname{H}^{n+2}_{et}\left(A, \mathbb{Z}(n)\right)\to
\operatorname{H}^{n+2}_{et}\left(k(A), \mathbb{Z}(n)\right)
\end{equation*}
is an injective.

Let 
\begin{math}
\spec A/\mathfrak{m}\to 
\spec A 
\end{math}
be the closed point with the open complement 
$j: U\to X$.

Then
the morphism
\begin{equation*}
\operatorname{H}^{n+2}_{et}\left(A, \mathbb{Z}(n)
\right)\to
\operatorname{H}^{n+2}_{et}\left(A, \tau_{\leq n+1}
\mathbf{R}j_{*}\mathbb{Z}(n)
\right)
\end{equation*}
is a surjective. Moreover we have
\begin{equation*}
\operatorname{H}^{n+2}_{et}\left(
A, \tau_{\leq n+1}
\mathbf{R}j_{*}\mathbb{Z}(n)
\right)=
\operatorname{Ker}\bigl(
\operatorname{H}^{n+2}_{et}(U, \mathbb{Z}(n))
\to
\Gamma(X, \mathbf{R}^{n+2}j_{*}\mathbb{Z}(n)_{et})
\bigr)
\end{equation*}
Therefore the statement follows.
\end{proof}
\begin{lem}\upshape
\label{mixl}
Let $A$ be same as above.
Suppose that $B$ be a discrete valuation ring with mixed-characteristic. Let $l$ be a positive integer 
such that 
\begin{math}
(l, \operatorname{char}(A))=1. 
\end{math}

Then we have
\begin{equation*}
\mathbb{Z}/l(n)_{et}
\simeq 
\mu_{l}^{\otimes n}. 
\end{equation*}
\end{lem}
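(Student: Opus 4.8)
The plan is to reduce the statement $\mathbb{Z}/l(n)_{et}\simeq\mu_l^{\otimes n}$ over $\spec A$ to the already-cited quasi-isomorphism of \cite[p.774, Theorem 1.2.4]{Ge} and \cite{V2}, which gives exactly this assertion for a smooth scheme of finite type over a field or a Dedekind ring. The difficulty is only that $A$ is a localization of finite type over $B$, hence \emph{essentially} of finite type but not literally of finite type, and that $\spec A\to\spec B$ is not assumed smooth in the naive sense (it is a localization of a smooth $B$-scheme). First I would recall that $A$, being a localization of finite type over the discrete valuation ring $B$ of mixed characteristic with $(l,\operatorname{char}(A))=1$, is in particular essentially smooth over the Dedekind ring $B$; write $A=\varinjlim_\lambda A_\lambda$ as a filtered colimit of smooth affine $B$-algebras of finite type $A_\lambda$ (the standard localizations appearing in $A$), with flat transition maps.

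Next I would invoke Remark \ref{direct}: since $A$ is Noetherian and the transition maps $f_{\mu\lambda}$ are flat, Bloch's cycle complex commutes with the colimit, $\varinjlim_\lambda z^n(A_\lambda,*)=z^n(A,*)$, hence $\mathbb{Z}/l(n)_{et}^{\spec A}\simeq\varinjlim_\lambda \mathbb{Z}/l(n)_{et}^{\spec A_\lambda}$ as complexes of \'etale sheaves, compatibly with the symbol/cycle-class maps. For each $A_\lambda$, which is smooth of finite type over the Dedekind ring $B$, Geisser's theorem \cite[p.774, Theorem 1.2.4]{Ge} together with \cite{V2} yields $\mathbb{Z}/l(n)_{et}^{\spec A_\lambda}\xrightarrow{\sim}\mu_l^{\otimes n}$ because $l$ is invertible on $\spec A_\lambda$ (as $(l,\operatorname{char}(A))=1$ and $A_\lambda$ maps to $A$, any residue characteristic of $A_\lambda$ dividing $l$ would force the corresponding point to lie outside the image, but to be safe one localizes each $A_\lambda$ further to invert $l$, which is harmless since inverting $l$ is already implicit in passing to $A$). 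Passing to the colimit and using that $\mu_l^{\otimes n}$ also commutes with these \'etale pullbacks/colimits gives the desired quasi-isomorphism over $\spec A$.

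Concretely, the argument runs: (1) present $A$ as $\varinjlim_\lambda A_\lambda$ with $A_\lambda$ smooth of finite type over $B$, $l$ invertible on $\spec A_\lambda$, flat transition maps; (2) apply the finite-type case of Geisser's quasi-isomorphism to each $A_\lambda$; (3) take the filtered colimit, using Remark \ref{direct} for the left side and the fact that \'etale cohomology sheaves of constructible torsion sheaves commute with filtered inverse limits of schemes with affine flat transition maps (cf.\ \cite[p.88, III, Lemma 1.16]{M}) for the right side; (4) conclude $\mathbb{Z}/l(n)_{et}\simeq\mu_l^{\otimes n}$ over $\spec A$. The main obstacle I anticipate is purely bookkeeping: making sure the presentation $A=\varinjlim A_\lambda$ can be chosen with each $A_\lambda$ genuinely smooth of finite type over $B$ (not merely essentially smooth) and with $l$ invertible, and that the comparison maps are compatible along the colimit; once that is set up, steps (2)--(4) are formal. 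Alternatively — and perhaps more cleanly — one can bypass the colimit entirely by noting that the statement of \cite[p.774, Theorem 1.2.4]{Ge}, like \cite[p.774, Theorem 1.2.2]{Ge} used repeatedly above, extends verbatim to essentially smooth schemes over a Dedekind ring by the same colimit argument, so the lemma is just the case $X=\spec A$ of that extended statement.
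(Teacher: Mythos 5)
Your argument collapses at its very first step: you assert that $A$, being a regular localization of a finite-type $B$-algebra, is essentially smooth over the Dedekind ring $B$, and hence admits a presentation $A=\varinjlim_\lambda A_\lambda$ with each $A_\lambda$ smooth of finite type over $B$ and flat transition maps. Neither claim is true in this mixed-characteristic setting, and the lemma is stated precisely to cover the non-smooth case (contrast Theorem \ref{DWE}, where smoothness over $B$ is an explicit additional hypothesis). Regularity of $A$ controls nothing about the special fibre of $\spec A\to\spec B$: take $B=\mathbb{Z}_p$ and $A=\bigl(\mathbb{Z}_p[x,y]/(y^2-x^3-p)\bigr)_{(x,y)}$, a two-dimensional regular local ring of mixed characteristic which is a localization of a finite-type $B$-algebra, but whose special fibre is a cuspidal curve; thus $A$ is not essentially smooth over $B$, and since $B\to A$ is not a regular morphism it also cannot be written as a filtered colimit of smooth $B$-algebras (by Popescu \cite{Po} such a presentation exists exactly when the morphism is regular, i.e.\ has geometrically regular fibres). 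Consequently \cite[Theorem 1.2.4]{Ge} applies neither termwise in a colimit nor in an ``extended to essentially smooth schemes over $B$'' form, and steps (2)--(4) of your outline have nothing to feed on. A secondary point: Geisser's comparison with $\mu_l^{\otimes n}$ requires $l$ invertible on the scheme, and since $\operatorname{char}(A)=0$ here the hypothesis $(l,\operatorname{char}(A))=1$ does not by itself exclude $l$ being divisible by the residue characteristic; your parenthetical claim that inverting $l$ ``is already implicit in passing to $A$'' is therefore not justified (the closed point of $\spec A$ has residue characteristic $p$, and $p$ is not a unit in $A$).

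The paper's own proof is designed to avoid smoothness over $B$ altogether: it obtains the truncated comparison $\tau_{\leq n+1}\mathbb{Z}/l(n)_{et}\simeq\tau_{\leq n+1}\mu_l^{\otimes n}$ from Proposition \ref{tru2} --- whose hypotheses require only that the \emph{open complement} of a suitable closed subscheme be essentially smooth over a ring of dimension at most one, which holds because the relevant open piece is regular and essentially of finite type over the characteristic-zero fraction field of $B$ --- combined with Gabber's absolute purity theorem \cite{Ga}, and then removes the truncation using the vanishing $\mathcal{H}^{i}\left(\mathbb{Z}(n)_{Zar}\right)=0$ for $i\geq n+2$, which gives $\mathcal{H}^{n+2}\left(\mathbb{Z}/l(n)_{et}\right)=0$. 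If you want to repair your write-up, you must replace the colimit-from-the-smooth-case reduction by an argument of this purity/localization type; as it stands the proposal proves the lemma only under the extra hypothesis that $\spec A\to\spec B$ is (essentially) smooth, which is not what is claimed.
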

\begin{proof}
We have a quasi-isomorphism
\begin{equation*}
\tau_{\leq n+1}\Bigl(
\mathbb{Z}/l(n)
\Bigr)
\simeq
\tau_{\leq n+1}\Bigl(
\mu^{\otimes n}_{l}
\Bigr)
\end{equation*}
by Proposition \ref{tru2} and the absolute purity theorem (\cite{Ga}). Moreover 
\begin{equation*}
\mathcal{H}^{i}\left(
\mathbb{Z}(n)_{Zar} 
\right)
=0
\end{equation*}
for $i\geq n+2$,
so we have 
\begin{math}
\mathcal{H}^{n+2}\left(
\mathbb{Z}/l(n)_{et}
\right)=0.
\end{math}
Therefore the statement follows.

\end{proof}
\begin{prop}\upshape
\label{mixGS}
Let notations be same as Lemma \ref{mixl}. Then the sequence
\begin{equation}
\label{gp}
\operatorname{H}^{n+1}\left(
A_{et}, \mu_{l}^{\otimes n}
\right)\to
\operatorname{H}^{n+1}\left(
k(A)_{et}, \mu_{l}^{\otimes n}
\right)\to
\bigoplus_{\mathfrak{p}\in \spec A^{(1)}}
\operatorname{H}^{n}\left(
\kappa(\mathfrak{p})_{et}, \mu_{l}^{\otimes n-1}
\right)
\end{equation}
is exact.
\end{prop}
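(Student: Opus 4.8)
The plan is to deduce this from Proposition \ref{rl2} (Problem \ref{prob} (iii) for $\spec A$) by moving back and forth between $\mu_{l}^{\otimes \bullet}$, $\mathbb{Z}/l(\bullet)_{et}$ and integral motivic cohomology. First, since $l$ is invertible in $A$, in $k(A)$ and in every residue field $\kappa(\mathfrak{p})$, Lemma \ref{mixl} (applied over each of these) gives $\mathbb{Z}/l(\bullet)_{et}\simeq\mu_{l}^{\otimes\bullet}$, so that the sequence (\ref{gp}) is the sequence of restriction and residue maps for $\mathbb{Z}/l(\bullet)_{et}$. Next I would invoke the distinguished triangle (\ref{disl}) with $m=l$, together with the vanishing of motivic cohomology of a field above the weight and the \'etale--Zariski comparison in degrees $\le n+1$ (as in the proof of Proposition \ref{supp}), to get $\operatorname{H}^{n+1}_{et}(k(A),\mathbb{Z}(n))=0$ and $\operatorname{H}^{n}_{et}(\kappa(\mathfrak{p}),\mathbb{Z}(n-1))=0$; hence $\operatorname{H}^{n+1}_{et}(k(A),\mathbb{Z}/l(n))\simeq\operatorname{H}^{n+1}_{\operatorname{B}}(k(A))_{l}$ and $\operatorname{H}^{n}_{et}(\kappa(\mathfrak{p}),\mathbb{Z}/l(n-1))\simeq\operatorname{H}^{n}_{\operatorname{B}}(\kappa(\mathfrak{p}))_{l}$. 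Moreover, comparing the Bockstein sequences for $A$ and $k(A)$ shows that the summand $\operatorname{H}^{n+1}_{et}(A,\mathbb{Z}(n))/l$ of $\operatorname{H}^{n+1}_{et}(A,\mathbb{Z}/l(n))$ maps to zero in $\operatorname{H}^{n+1}_{et}(k(A),\mathbb{Z}/l(n))$, so the image of the first map of (\ref{gp}) equals the image of $\alpha_{l}$, where $\alpha\colon\operatorname{H}^{n+1}_{\operatorname{B}}(A)\hookrightarrow\operatorname{H}^{n+1}_{\operatorname{B}}(k(A))$ is the restriction from Proposition \ref{rl2}; and, using that the motivic Gysin map of Proposition \ref{tru2} reduces mod $l$ to the absolute-purity residue (the $\mu_{l}$-analogue of Proposition \ref{Gysin}), the second map of (\ref{gp}) is identified with the $l$-torsion of the boundary $\partial\colon\operatorname{H}^{n+1}_{\operatorname{B}}(k(A))\to\bigoplus_{\mathfrak{p}}\operatorname{H}^{n}_{\operatorname{B}}(\kappa(\mathfrak{p}))$. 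Thus exactness of (\ref{gp}) in the middle is equivalent to $\operatorname{Im}(\alpha_{l})=\operatorname{Ker}(\partial_{l})$.

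Since $\operatorname{dim}(A)=2$, the only dimension-zero point of $\spec A$ is the closed point $\mathfrak{m}$, so Proposition \ref{rl2} says exactly that $\alpha$ is injective with image $\operatorname{Ker}(\partial)\cap\operatorname{Ker}(\operatorname{Res})$, where $\operatorname{Res}\colon\operatorname{H}^{n+1}_{\operatorname{B}}(k(A))\to\operatorname{H}^{n+1}_{\operatorname{B}}(k(A_{\bar{\mathfrak{m}}}))$ and $A_{\bar{\mathfrak{m}}}$ is the strict Henselization of $A$ at $\mathfrak{m}$. Because $\alpha$ is injective, $\operatorname{Im}(\alpha)_{l}=\alpha\bigl(\operatorname{H}^{n+1}_{\operatorname{B}}(A)_{l}\bigr)=\operatorname{Im}(\alpha_{l})$, and taking $l$-torsion of $\operatorname{Im}(\alpha)=\operatorname{Ker}(\partial)\cap\operatorname{Ker}(\operatorname{Res})$ reduces the problem to one claim: every $c\in\operatorname{H}^{n+1}_{\operatorname{B}}(k(A))_{l}=\operatorname{H}^{n+1}_{et}(k(A),\mu_{l}^{\otimes n})$ with $\partial(c)=0$ automatically satisfies $\operatorname{Res}(c)=0$ in $\operatorname{H}^{n+1}_{et}(k(A_{\bar{\mathfrak{m}}}),\mu_{l}^{\otimes n})$. (The identification $\operatorname{H}^{n+1}_{\operatorname{B}}(k(A_{\bar{\mathfrak{m}}}))_{l}=\operatorname{H}^{n+1}_{et}(k(A_{\bar{\mathfrak{m}}}),\mu_{l}^{\otimes n})$ is the same Bockstein computation, the field $k(A_{\bar{\mathfrak{m}}})$ being a filtered union of function fields essentially smooth over $B$, so the relevant vanishings pass to the limit by Remark \ref{direct}.)

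For this last claim, the idea is to use that $A_{\bar{\mathfrak{m}}}$ is a two-dimensional excellent strictly Henselian regular local ring in which $l$ is invertible: then $\operatorname{H}^{m}_{et}(A_{\bar{\mathfrak{m}}},\mu_{l}^{\otimes n})=0$ for $m>0$, and the Gersten resolution of $\mu_{l}^{\otimes n}$ on $\spec A_{\bar{\mathfrak{m}}}$ (from the absolute purity theorem \cite{Ga} and the Bloch--Ogus--Gabber method, equivalently as the colimit of the Gersten complexes of the \'etale neighbourhoods of $\mathfrak{m}$) shows that $\operatorname{H}^{n+1}_{et}(k(A_{\bar{\mathfrak{m}}}),\mu_{l}^{\otimes n})$ embeds via residues into $\bigoplus_{\mathfrak{q}\in\spec A_{\bar{\mathfrak{m}}}^{(1)}}\operatorname{H}^{n}_{et}(\kappa(\mathfrak{q}),\mu_{l}^{\otimes n-1})$. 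For $\mathfrak{q}$ over $\mathfrak{p}\in\spec A^{(1)}$ the extension of discrete valuation rings $A_{\mathfrak{p}}\to(A_{\bar{\mathfrak{m}}})_{\mathfrak{q}}$ is unramified, so the residue of $\operatorname{Res}(c)$ at $\mathfrak{q}$ is the image of $\partial_{\mathfrak{p}}(c)=0$; hence $\operatorname{Res}(c)=0$, which finishes the proof. The main obstacle is precisely this step: having the Gersten resolution for $\mu_{l}^{\otimes n}$ available on the mixed-characteristic strictly Henselian ring $A_{\bar{\mathfrak{m}}}$. If one prefers to avoid it, one can split into cases: for $n\ge 2$ the cohomological dimension bound $\operatorname{cd}_{l}(k(A_{\bar{\mathfrak{m}}}))\le\operatorname{dim}(A_{\bar{\mathfrak{m}}})=2<n+1$ already forces $\operatorname{H}^{n+1}_{et}(k(A_{\bar{\mathfrak{m}}}),\mu_{l}^{\otimes n})=0$, so $\operatorname{Res}(c)=0$ trivially; for $n=1$ one uses classical purity of the Brauer group (a residue-unramified class lies in $\operatorname{Br}(A)$) together with $\operatorname{Br}(A_{\bar{\mathfrak{m}}})=0$; and the case $n=0$ is degenerate.
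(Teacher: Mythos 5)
Your route is genuinely different from the paper's. The paper never leaves $\spec A$: it applies Gabber's absolute purity to the complement $j\colon \spec A_{f}\to \spec A$ of a regular divisor to get $\mathbf{R}^{n+1}j_{*}\mu_{l}^{\otimes n}=0$ (for $n\geq 1$), deduces $\operatorname{H}^{n+2}_{et}(A,\tau_{\leq n+1}\mathbf{R}j_{*}\mathbb{Z}(n))_{l}=\operatorname{H}^{n+2}_{et}(A_{f},\mathbb{Z}(n))_{l}$, and feeds this into the localization triangle (\ref{diset}) and the argument of Proposition \ref{rl2}; no input beyond absolute purity on $\spec A$ is used. You instead take Proposition \ref{rl2} as a black box and reduce everything to a single claim: an $l$-torsion class over $k(A)$ with all codimension-one residues zero restricts to zero in $\operatorname{H}^{n+1}_{et}(k(A_{\bar{\mathfrak{m}}}),\mu_{l}^{\otimes n})$. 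The bookkeeping you do to get there is sound: the Bockstein identifications using $\operatorname{H}^{w+1}_{et}(F,\mathbb{Z}(w))=0$ for fields, the injectivity of $\alpha$ from Proposition \ref{rl2}, the (needed, and reasonable) compatibility of the motivic boundary with the purity residue, and the fact that height-one primes of $A_{\bar{\mathfrak{m}}}$ lie over height-one primes of $A$ with unramified extensions of discrete valuation rings, so that residues of $\operatorname{Res}(c)$ vanish.

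The weak point is the pivotal claim itself. Your preferred justification, a Gersten resolution of $\mu_{l}^{\otimes n}$ on the mixed-characteristic strictly Henselian ring $A_{\bar{\mathfrak{m}}}$, is not available from anything in the paper (Panin's theorem is equicharacteristic), and it is essentially a strictly Henselian form of the statement being proved, so as written this branch is circular; you rightly flag it. The fallback does rescue the argument, but only if its inputs are actually cited: for $n\geq 2$ the bound $\operatorname{cd}_{l}(k(A_{\bar{\mathfrak{m}}}))\leq 2$ is a genuine theorem about fraction fields of two-dimensional excellent Henselian local domains with separably closed residue field (Colliot-Th\'el\`ene--Ojanguren--Parimala, resting on resolution of singularities for excellent surfaces), not a formal consequence of $\operatorname{dim}(A_{\bar{\mathfrak{m}}})=2$; for $n=1$ you need Grothendieck's exact sequence $0\to\operatorname{Br}(X)\to\operatorname{Br}(K)\to\bigoplus_{x\in X^{(1)}}\operatorname{H}^{1}(\kappa(x),\mathbb{Q}/\mathbb{Z})$ for the two-dimensional regular scheme $X=\spec A_{\bar{\mathfrak{m}}}$ together with $\operatorname{H}^{2}_{et}(A_{\bar{\mathfrak{m}}},\mathbb{G}_{m})=0$, which is fine but should be said. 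So the route can be completed, at the cost of heavier field-theoretic input than the paper's purely purity-theoretic proof. Finally, note that the hypothesis $(l,\operatorname{char}(A))=1$ is vacuous in mixed characteristic; both the paper's proof and yours really require $l$ invertible in $A$ (so $l\neq p$), and your phrase that $l$ is invertible in every residue field silently makes that assumption --- state it explicitly.
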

\begin{proof}
Let 
\begin{math}
i: \spec A/(f) \to \spec A 
\end{math}
be a regular closed subscheme of codimension $1$.
Let 
\begin{math}
j: \spec A_{f}\to \spec A 
\end{math}
be the open complement of $i$.
Then 
\begin{math}
\mathbf{R}^{n+1}j_{*}\mu_{l}^{\otimes n}=0 
\end{math}
for $n+1 \neq 1$ by the absolute purity theorem (\cite{Ga}). Hence
\begin{math}
\operatorname{H}^{n+1}(A_{et}, \tau_{\leq n}\mathbf{R}j_{*}\mu_{l}^{n})=
\operatorname{H}^{n+1}((A_{f})_{et}, \mu_{l}^{\otimes n}),
\end{math}
so we have 
\begin{equation*}
\operatorname{H}^{n+2}_{et}\left(
A, \tau_{\leq n+1}\mathbf{R}j_{*}\mathbb{Z}(n)
\right)_{l}=
\operatorname{H}^{n+2}_{et}\left(
A_{f}, \mathbb{Z}(n)
\right)_{l}
\end{equation*}
for $n+1\neq 1$. Therefore the statement follows.

\end{proof}
\begin{rem}\upshape
The sequence (\ref{gp}) is exact and the first map in the sequence 
(\ref{gp})
is an injective in the case where $A$ is an equi-characteristic
regular local ring (even when $\operatorname{dim}(A)\neq 2$) 
by \cite[Theorem C]{P}.
\end{rem}
\begin{thm}\upshape
\label{DWE}

Let $B$ 
be a discrete valuation ring which is an essentially of
finite type over a field.
Let $A$ be a regular local ring which is smooth over $A$ with 
$\operatorname{dim}(A)=2$.

Then the sequence
\begin{align*}
0\to
&\operatorname{H}^{1}(A_{et}, \nu_{r}^{n})\to
\operatorname{Ker}\Bigl(
\operatorname{H}^{1}(k(A)_{et}, \nu_{r}^{n})\to
\prod_{\mathfrak{p}\in \spec A^{(1)}}\operatorname{H}^{1}\left(
k(A_{\bar{\mathfrak{p}}})_{et}, \nu_{r}^{n}
\right)
\Bigr)\\
\to &
\bigoplus_{\mathfrak{p}\in \spec A^{(1)}}
\operatorname{H}^{1}(\kappa(\mathfrak{p})_{et}, \nu_{r}^{n-1})
\end{align*}
is exact. 
\end{thm}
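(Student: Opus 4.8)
The plan is to reduce the statement about $\nu_r^n$ to the already–proven statement about $\mathbb{Z}/p^r(n)_{et}$ via the quasi-isomorphism (\ref{MDW}) of Lemma \ref{LDW}, and then to invoke Proposition \ref{rl2} (Problem \ref{prob} (iii) for such $A$) together with the local analysis of $\mathbf{R}^{n+2}j_*$ carried out in Lemma \ref{str} and the proof of Proposition \ref{supp}. First I would note that, since $B$ is an essentially of finite type discrete valuation ring over a field of characteristic $p>0$ and $A$ is smooth over $B$, Lemma \ref{LDW} applies to $\spec A$ and to all the relevant residue rings, so that $\mathbb{Z}/p^r(m)_{Zar}\simeq \nu_r^m[-m]$ on $\spec A$; combined with Lemma \ref{LDW} (the Zariski sheaf $\mathcal{H}^t(\mathbb{Z}/p^r(m)_{Zar})$ vanishes for $t\neq m$) and the comparison $\mathbb{Z}/p^r(m)_{Zar}\xrightarrow{\sim}\mathbf{R}\epsilon_*\mathbb{Z}/p^r(m)_{et}$ in the relevant range (Proposition \ref{GSV}(ii), using $\operatorname{dim}(\spec A)=2$), one gets
\begin{equation*}
\operatorname{H}^{1}_{et}(A, \nu_r^n)=\operatorname{H}^{n+1}_{et}(A, \mathbb{Z}/p^r(n))=\operatorname{H}^{n+2}_{et}(A, \mathbb{Z}(n))_{p^r},
\end{equation*}
the last equality from the distinguished triangle (\ref{disl}) together with the vanishing $\operatorname{H}^{n+1}_{et}(A,\mathbb{Z}(n))=0$ and the torsion-freeness statements used in Proposition \ref{leD}. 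The same identifications hold with $A$ replaced by $k(A)$, by $k(A_{\bar{\mathfrak p}})$ (taking the colimit over étale neighborhoods and using Remark \ref{direct}), and by the residue fields $\kappa(\mathfrak{p})$ with $n$ replaced by $n-1$.

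Next I would translate the three groups in the target sequence into this motivic-cohomology language. The term $\operatorname{H}^1(k(A)_{et},\nu_r^n)$ becomes $\operatorname{H}^{n+2}_{et}(k(A),\mathbb{Z}(n))_{p^r}$; the product $\prod_{\mathfrak p}\operatorname{H}^1(k(A_{\bar{\mathfrak p}})_{et},\nu_r^n)$ becomes $\prod_{\mathfrak p}\operatorname{H}^{n+2}_{et}(k(A_{\bar{\mathfrak p}}),\mathbb{Z}(n))_{p^r}$, which by Lemma \ref{str} is exactly $\prod_{\mathfrak p}(\mathbf{R}^{n+2}j_*\mathbb{Z}(n)_{et})_{\bar{x}}$ for the appropriate $j:U\to\spec A$; and the last term $\bigoplus_{\mathfrak p}\operatorname{H}^1(\kappa(\mathfrak p)_{et},\nu_r^{n-1})=\bigoplus_{\mathfrak p}\operatorname{H}^{n+1}_{et}(\kappa(\mathfrak p),\mathbb{Z}/p^r(n-1))$ is the $p^r$-torsion of the "residue" term appearing in the localization/Gysin triangle (\ref{diset}) for a codimension-one regular closed point $\spec A/(f)\hookrightarrow \spec A$, iterated over the height-one primes. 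With these translations the desired exact sequence becomes precisely the $p^r$-torsion of the exact sequence
\begin{equation*}
0\to \operatorname{H}^{n}_{\operatorname{B}}(A)\to \operatorname{Ker}\Bigl(\operatorname{H}^{n}_{\operatorname{B}}(k(A))\xrightarrow{\operatorname{Res}}\prod_{\mathfrak p}\operatorname{H}^{n}_{\operatorname{B}}(k(A_{\bar{\mathfrak p}}))\Bigr)\to\bigoplus_{\mathfrak p}\operatorname{H}^{n-1}_{\operatorname{B}}(\kappa(\mathfrak p))
\end{equation*}
with $n$ shifted appropriately — that is, Problem \ref{prob} (iii) for $\spec A$, which is Proposition \ref{rl2}. (One must take a little care that passing to $p^r$-torsion is exact here; this is where the torsion-freeness of $\operatorname{H}^{n+1}_{et}(-,\mathbb{Z}(n))$ on these regular local/field spectra, already exploited in Proposition \ref{leD}, is used, so that $(-)_{p^r}$ commutes with the kernels in question.)

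Concretely, the steps in order are: (1) verify Lemma \ref{LDW} applies to $\spec A$, $k(A)$, $k(A_{\bar{\mathfrak p}})$ and the $\kappa(\mathfrak p)$, and record the resulting identifications $\operatorname{H}^1(-,\nu_r^m)=\operatorname{H}^{m+2}_{et}(-,\mathbb{Z}(m))_{p^r}$; (2) identify $\operatorname{H}^1(k(A_{\bar{\mathfrak p}})_{et},\nu_r^n)$ with the stalk $(\mathbf{R}^{n+2}j_*\mathbb{Z}(n)_{et})_{\bar{x}}$ via Lemma \ref{str}; (3) identify $\bigoplus_{\mathfrak p}\operatorname{H}^1(\kappa(\mathfrak p)_{et},\nu_r^{n-1})$ with the $p^r$-torsion of the boundary term in the Gysin triangle (\ref{diset}), using Proposition \ref{Gysin} to match the boundary maps with the Gysin/symbol maps; (4) apply Proposition \ref{rl2} and pass to $p^r$-torsion. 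The main obstacle I expect is step (3): making sure the boundary map in the target sequence — which on the $\nu$-side is a sum of tame-symbol/residue maps à la Shiho — is genuinely identified, compatibly over all height-one primes simultaneously, with the connecting map of the triangle (\ref{diset}) for $\mathbb{Z}(n)_{et}$, and that the "exceptional" contributions (the closed point of $\spec A$, where codimension jumps to $2$) are controlled so that only the codimension-one residues survive; this is exactly the kind of compatibility Proposition \ref{Gysin} is designed to provide, but applying it uniformly and checking the bookkeeping of the long exact sequence is the delicate part. A secondary, more routine, obstacle is the exactness of $(-)_{p^r}$ under the relevant kernels, handled by the torsion-freeness inputs from Proposition \ref{leD}.
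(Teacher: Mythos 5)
Your overall strategy --- translate everything via Lemma \ref{LDW} into the $p^{r}$-torsion of \'etale motivic cohomology and then quote a Gersten-type statement --- is in the spirit of the paper, but the pivot you choose does not fit and the decisive step is missing. Problem \ref{prob} (iii) for $\spec A$ (Proposition \ref{rl2}) has as its middle term the kernel of restriction to $\prod_{x\in(\spec A)_{(0)}}$, i.e.\ to the single strictly henselian field $k(A_{\bar{\mathfrak m}})$ at the closed point, whereas Theorem \ref{DWE} has the kernel of restriction to $\prod_{\mathfrak p\in\spec A^{(1)}}k(A_{\bar{\mathfrak p}})$, the strict henselizations at the height-one primes. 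These are different unramifiedness conditions, so the theorem is not ``precisely the $p^{r}$-torsion of Proposition \ref{rl2}''. In the paper the product over $\spec A^{(1)}$ comes from Proposition \ref{supp} applied to the punctured spectrum $U=\spec A\setminus\{\mathfrak m\}$, a one-dimensional regular integral scheme whose closed points are exactly the height-one primes of $A$ and whose function field is $k(A)$. (Relatedly, your use of Lemma \ref{str} is off: for $\mathfrak p\in U$ the stalk of $\mathbf{R}^{n+2}j_{*}$ at $\bar{\mathfrak p}$ is cohomology of $\spec A_{\bar{\mathfrak p}}$ itself, not of its fraction field; $k(A_{\bar{\mathfrak p}})$ only appears after shrinking the open set and passing to the limit, which is exactly what the proof of Proposition \ref{supp} does on $U$.)

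The missing idea is the step where smoothness over the characteristic-$p$ discrete valuation ring is actually used: one must show that the restriction $\operatorname{H}^{n+2}_{et}(A,\mathbb{Z}(n))_{p^{r}}\to\operatorname{H}^{n+2}_{et}(U,\mathbb{Z}(n))_{p^{r}}$ is an \emph{isomorphism}, not merely injective, so that $\operatorname{H}^{1}(A_{et},\nu^{n}_{r})$ may be substituted for the $U$-term in the sequence furnished by Proposition \ref{supp}. The paper obtains this from a purity computation with supports in the closed point: by Gros--Shiho--Suwa, Bloch--Gabber--Kato, Proposition \ref{Gysin} and the quasi-isomorphism (\ref{quasiil}), one has $\operatorname{H}^{n+2}_{\spec(A/\mathfrak m)}((\spec A)_{et},\mathbb{Z}/p^{r}(n))\cong \operatorname{K}^{M}_{n-2}(A/\mathfrak m)/p^{r}$ and $\operatorname{H}^{n+2}_{\spec(A/\mathfrak m)}((\spec A)_{et},\mathbb{Z}(n))\cong \operatorname{K}^{M}_{n-2}(A/\mathfrak m)$, whence $\operatorname{H}^{n+3}_{\spec(A/\mathfrak m)}((\spec A)_{et},\mathbb{Z}(n))_{p^{r}}=0$ and the desired isomorphism follows from the localization sequence. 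Your plan invokes Proposition \ref{Gysin} only to ``match residue maps'' at height-one primes, which is not where it is needed; without the supports computation at the closed point you get at best injectivity statements of the kind already in Proposition \ref{rl2}, and that is not enough to conclude exactness at the middle term of the theorem.
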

\begin{proof}
We have
\begin{equation*}
\operatorname{H}^{n+1}_{B}(A)_{p^{r}}
=
\operatorname{H}^{1}\left(
A_{et}, \operatorname{W\Omega}^{n}_{r, \log}
\right)
\end{equation*}
by Lemma \ref{LDW}.
Let $i: \spec A/\mathfrak{m}\to \spec A$ be the closed point with the open complement
$j: U\to X$. Then we have 
\begin{equation*}
\operatorname{H}^{n+2}_{\spec (A/\mathfrak{m})}\left(
(\spec A)_{et}, \mathbb{Z}/p^{r}(n)
\right)
=\operatorname{K}_{M}^{n-2}(A/\mathfrak{m})/
p^{r}\operatorname{K}_{M}^{n-2}(A/\mathfrak{m})
\end{equation*}
by the theorem of Gros-Shiho-Suwa \cite[p.583, Theorem 3.2]{Sh}, the theorem of
Bloch-Gabber-Kato \cite[p.117, Corollary (2.8)]{B-K} and Proposition \ref{DWE}. 
Here $\operatorname{K}_{M}^{n-2}(A/\mathfrak{m})$ is 
the Milnor $K$-group of $A/\mathfrak{m}$.
Moreover we have
\begin{equation*}
\operatorname{H}^{n+2}_{\spec (A/\mathfrak{m})}\left(
(\spec A)_{et}, \mathbb{Z}(n)
\right)
=\operatorname{K}_{M}^{n-2}\left(
A/\mathfrak{m}
\right)
\end{equation*}
by \cite{N-Su} and the quasi-isomorphism (\ref{quasiil}). Hence we have
\begin{equation*}
\operatorname{H}^{n+3}_{\spec(A/\mathfrak{m})}\left(
(\spec A)_{et}, \mathbb{Z}(n)
\right)_{p^{r}}
=0
\end{equation*}
by a distinguished triangle
\begin{equation*}
\cdots\to
i_{*}\mathbf{R}i^{!}\mathbb{Z}(n)\xrightarrow{\times p^{r}}
i_{*}\mathbf{R}i^{!}\mathbb{Z}(n)\to
i_{*}\mathbf{R}i^{!}\mathbb{Z}/p^{r}(n)
\to\cdots.
\end{equation*}
Therefore the morphism
\begin{equation*}
\operatorname{H}^{n+2}_{et}\left(A, \mathbb{Z}(n)\right)_{p^{r}}
\to
\operatorname{H}^{n+2}_{et}\left(A, \mathbf{R}j_{*}\mathbb{Z}(n)\right)_{p^{r}}
=\operatorname{H}^{n+2}_{et}\left(
U, \mathbb{Z}(n)\right)_{p^{r}}
\end{equation*}
is an isomorphism. The statement follows from Proposition \ref{supp}.
\end{proof}
\begin{prop}\upshape
\label{sur}
Let $\mathcal{O}_{K}$ be a Henselian discrete valuation ring with residue field $F$ and the quotient
field $K$. 

Let $\mathfrak{X}$ be a connected  regular scheme and 
proper over 
$\spec\mathcal{O}_{K}$ with $\operatorname{dim}(\mathfrak{X})=2$, $X=\mathfrak{X}\otimes_{\mathcal{O}_{K}} K$ 
and $Y=\mathfrak{X}\otimes_{\mathcal{O}_{K}} F$. 

Then the morphism
\begin{equation}\label{mil}
\operatorname{H}^{n+1}_{Zar}\left(X, \mathbb{Z}(n)\right)\to
\operatorname{H}^{n}_{Zar}\left(Y, \mathbb{Z}(n-1)\right)
\end{equation}
is a surjective.
\end{prop}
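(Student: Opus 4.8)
The plan is to use the localization sequence for the closed fibre to convert the surjectivity of (\ref{mil}) into a vanishing statement, and then to prove that vanishing by an explicit tame–symbol computation; throughout I assume, as is implicit in the statement, that $\mathfrak{X}$ is flat over $\spec\mathcal{O}_{K}$ (otherwise $X=\emptyset$ and the claim fails already for $n=2$).

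Let $i\colon Y\hookrightarrow\mathfrak{X}$ be the closed fibre, which is the principal Cartier divisor $V(\pi)$ for a uniformizer $\pi$ of $\mathcal{O}_{K}$, hence a codimension one closed subscheme, with open complement $j\colon X\hookrightarrow\mathfrak{X}$. Bloch's localization theorem for the cycle complex over $\spec\mathcal{O}_{K}$ (cf.\ Proposition \ref{tru2}) furnishes a distinguished triangle
\begin{equation*}
i_{*}\mathbb{Z}(n-1)^{Y}_{Zar}[-2]\to\mathbb{Z}(n)^{\mathfrak{X}}_{Zar}\to\mathbf{R}j_{*}\mathbb{Z}(n)^{X}_{Zar}\xrightarrow{+1}
\end{equation*}
and hence a long exact sequence in which (\ref{mil}) is the boundary map $\operatorname{H}^{n+1}_{Zar}(X,\mathbb{Z}(n))\to\operatorname{H}^{n}_{Zar}(Y,\mathbb{Z}(n-1))$. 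Since $X$ is regular of dimension one, the localization sequence of each discrete valuation ring $\mathcal{O}_{X,x}$, together with the vanishing $\operatorname{H}^{q}(k,\mathbb{Z}(m))=0$ for $q>m$ over a field $k$, forces $\mathcal{H}^{q}(\mathbb{Z}(n)^{X}_{Zar})=0$ for $q>n$; as $\dim X=1$ the hypercohomology spectral sequence then gives $\operatorname{H}^{i}_{Zar}(X,\mathbb{Z}(n))=0$ for $i>n+1$, and in particular $\operatorname{H}^{n+2}_{Zar}(X,\mathbb{Z}(n))=0$. Feeding this into the long exact sequence shows that (\ref{mil}) is surjective if and only if $\operatorname{H}^{n+2}_{Zar}(\mathfrak{X},\mathbb{Z}(n))=0$.

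To establish the latter, the same argument applied to the two–dimensional regular local rings $\mathcal{O}_{\mathfrak{X},Q}$ gives $\mathcal{H}^{q}(\mathbb{Z}(n)^{\mathfrak{X}}_{Zar})=0$ for $q>n$, so since $\dim\mathfrak{X}=2$ the hypercohomology spectral sequence yields $\operatorname{H}^{n+2}_{Zar}(\mathfrak{X},\mathbb{Z}(n))=\operatorname{H}^{2}_{Zar}(\mathfrak{X},\mathcal{H}^{n}(\mathbb{Z}(n)_{Zar}))$. By Nesterenko--Suslin--Totaro (using Kerz's improved Milnor $K$-theory when residue fields are finite) $\mathcal{H}^{n}(\mathbb{Z}(n)_{Zar})\cong\mathcal{K}^{M}_{n}$, whose Gersten resolution identifies $\operatorname{H}^{2}_{Zar}(\mathfrak{X},\mathcal{K}^{M}_{n})$ with the cokernel of the residue map $\bigoplus_{x\in\mathfrak{X}^{(1)}}K^{M}_{n-1}(\kappa(x))\to\bigoplus_{x\in\mathfrak{X}^{(2)}}K^{M}_{n-2}(\kappa(x))$ (for $n=2$ this is $\operatorname{CH}_{0}(\mathfrak{X})$). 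I would show this cokernel is zero. Let $Q$ be a closed point of $\mathfrak{X}$; it lies in $Y$ because $\mathfrak{X}$ is proper over the local base $\spec\mathcal{O}_{K}$. Choose $t\in\mathfrak{m}_{Q}\setminus\mathfrak{m}_{Q}^{2}$ generating a height one prime of the regular local ring $\mathcal{O}_{\mathfrak{X},Q}$ distinct from the finitely many components of $V(\pi)$ through $Q$; the corresponding point $x\in\mathfrak{X}^{(1)}$ has closure $\overline{\{x\}}$ an integral curve, flat and finite over $\spec\mathcal{O}_{K}$, so — as $\mathcal{O}_{K}$ is Henselian and $\overline{\{x\}}$ is regular at $Q$ — it is the spectrum of a discrete valuation ring with fraction field $\kappa(x)$, residue field $\kappa(Q)$, and $Q$ as its only closed point. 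Hence the relevant component of the residue map is the tame symbol $K^{M}_{n-1}(\kappa(x))\to K^{M}_{n-2}(\kappa(Q))$ of that discrete valuation ring, which is split surjective (a uniformizer together with unit lifts provides a section). Therefore $K^{M}_{n-2}(\kappa(Q))$ lies in the image of the residue map for every $Q$, the cokernel vanishes, $\operatorname{H}^{n+2}_{Zar}(\mathfrak{X},\mathbb{Z}(n))=0$, and (\ref{mil}) is surjective.

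The two steps I expect to require the most care are: making the localization triangle and the Gersten-type vanishings available in the stated generality — $\mathcal{O}_{K}$ is merely Henselian, so unless one assumes it excellent (or essentially of finite type over a field, as elsewhere in the paper) one must reduce to that case by a filtered–colimit presentation (Remark \ref{direct} together with Popescu's theorem) and invoke Bloch's localization for the approximating schemes; and the choice of the regular horizontal curve through a closed point $Q$ at which the closed fibre is singular or non-reduced, which is precisely where the regularity of $\mathfrak{X}$ (so that $\mathcal{O}_{\mathfrak{X},Q}$ is a regular, hence factorial, local ring) enters, together with the observation that only finitely many prime divisors through $Q$ are vertical, so a sufficiently general $t$ works even over a finite residue field.
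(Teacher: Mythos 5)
Your strategy is viable and is, in essence, the paper's argument run in the opposite direction. The paper computes the map (\ref{mil}) explicitly through the coniveau filtration $F^{s}z^{n}$ (this is how it obtains (\ref{momil})), identifies the boundary with tame symbols followed by norms via \cite[Lemma 3.2]{Ge-L}, \cite{F}, \cite{Su-V}, and concludes surjectivity from the existence, for each closed point of $Y$, of a horizontal Henselian discrete valuation ring in $\mathfrak{X}$ with that residue field (citing \cite[proof of Lemma (3.11)]{Sa}); the vanishing $\operatorname{H}^{n+2}_{Zar}(\mathfrak{X}, \mathbb{Z}(n))=0$ is then deduced afterwards as Corollary \ref{vm}. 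You instead reduce surjectivity of (\ref{mil}) to that vanishing via the localization triangle and prove the vanishing directly. Your construction at a closed point $Q$ (a regular parameter $t$ with $(t)$ not a vertical prime; the closure of the corresponding point is proper and quasi-finite, hence finite over the Henselian $\mathcal{O}_{K}$, hence local, hence equal to $\spec(\mathcal{O}_{\mathfrak{X},Q}/(t))$, a DVR with residue field $\kappa(Q)$ and unique closed point $Q$) is exactly the geometric content of the Saito lemma the paper invokes, and the tame-symbol surjectivity is the same finishing step; your version has the advantage of avoiding the normalization/norm bookkeeping, since only a cokernel is at stake. Note also that you only need the implication ``vanishing implies surjectivity'', so the computation $\operatorname{H}^{n+2}_{Zar}(X,\mathbb{Z}(n))=0$ can be dispensed with.

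The genuine problem is how you justify the presentation of $\operatorname{H}^{n+2}_{Zar}(\mathfrak{X},\mathbb{Z}(n))$ as the cokernel of $\bigoplus_{x\in\mathfrak{X}^{(1)}}K^{M}_{n-1}(\kappa(x))\to\bigoplus_{Q\in\mathfrak{X}^{(2)}}K^{M}_{n-2}(\kappa(Q))$. You claim (a) $\mathcal{H}^{q}(\mathbb{Z}(n)^{\mathfrak{X}}_{Zar})=0$ for $q>n$ ``by the same argument'' applied to $\mathcal{O}_{\mathfrak{X},Q}$, and (b) the identification $\mathcal{H}^{n}(\mathbb{Z}(n))\simeq\mathcal{K}^{M}_{n}$ together with its Gersten resolution on $\mathfrak{X}$. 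Neither is available here: for the two-dimensional local rings the localization sequence does not formally give the vanishing --- a subquotient of $\operatorname{H}^{n+1}(\mathcal{O}_{\mathfrak{X},Q},\mathbb{Z}(n))$ is the middle cohomology of the Gersten complex of Milnor $K$-theory of $\mathcal{O}_{\mathfrak{X},Q}$, and this is a Gersten-type statement not known for regular local rings that are merely essentially of finite type over a (possibly mixed-characteristic, merely Henselian) discrete valuation ring --- and Kerz's Gersten resolution for $\mathcal{K}^{M}_{n}$ is an equicharacteristic theorem. Fortunately neither is needed: in the coniveau spectral sequence $E^{s,t}_{1}=\bigoplus_{x\in\mathfrak{X}^{(s)}}\operatorname{H}^{t-s}(\kappa(x),\mathbb{Z}(n-s))\Rightarrow\operatorname{H}^{s+t}_{Zar}(\mathfrak{X},\mathbb{Z}(n))$, which only uses the localization theorem you already invoke, the terms in total degree $n+2$ with $s=0,1$ vanish because motivic cohomology of a field vanishes above the weight, and $E^{2,n}_{2}$ receives no higher differential because $E^{0,n+1}_{1}=0$; hence $\operatorname{H}^{n+2}_{Zar}(\mathfrak{X},\mathbb{Z}(n))$ is exactly the cokernel of $d_{1}$, which is the paper's filtration computation (\ref{momil}) carried out on $\mathfrak{X}$ instead of $X$. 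With that substitution, and a reference such as \cite[Lemma 3.2]{Ge-L} for identifying the relevant component of $d_{1}$ with the tame symbol at a point where your horizontal curve is regular, your proof goes through.
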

\begin{proof}
Let
\begin{equation*}
F^{s}z^{n}(X, *)=
\operatorname*{colim}_{\substack{Z\subset X\\ \operatorname{codim}_{X}Z\geq s}}\operatorname{im}\left(
z^{n-s}(Z, *)\to z^{n}(X, *)
\right),
\end{equation*}
be the subcomplex of $z^{n}(X, *)$ generated by cycles whose projection to $X$ has codimension at least
$s$. 
Taking the colimit of the localization sequence (\cite[p.779, Theorem 3.2 (a)]{Ge}) over pairs $Z^{\prime}\subset Z$ 
with $Z$ of 
codimension $s$ and
$Z^{\prime}$ of codimension $s+1$, we get quasi-isomorphisms 
\begin{align*}
gr^{s}z^{n}(X, *)=F^{s}z^{n}(X, *)/F^{s+1}z^{n}(X, *)&\simeq \operatorname*{colim}_{(Z, Z^{\prime})}
z^{n-s}(Z-Z^{\prime}, *)\\
&\xrightarrow[\operatorname{res}]{\simeq}\bigoplus_{x\in X^{(s)}}z^{n-s}(\kappa(x), *).
\end{align*}
Here the right hand map send a cycle on a subscheme to its restriction 
to the generic points (\cite[p.782]{Ge}). Then
\begin{align}\label{momil}
&\operatorname{H}_{Zar}^{n+1}(X, \mathbb{Z}(n)) \nonumber
\\
=&\operatorname{Coker}\left(
\bigoplus_{x\in X^{(0)}}\operatorname{H}^{n}_{Zar}(\kappa(x), \mathbb{Z}(n))
\to \bigoplus_{x\in X^{(1)}}\operatorname{H}^{n-1}_{Zar}(\kappa(x), \mathbb{Z}(n-1))
\right)
\end{align}
by the exact sequence
\begin{equation*}
0\to F^{1}z^{n}(X, *)\to z^{n}(X, *)\to gr^{0}z^{n}(X, *)\to 0.
\end{equation*}
Moreover
\begin{equation}
\operatorname{Coker}\left(
F^{1}z^{n}(\mathfrak{X}, *)\to F^{1}z^{n}(X, *)
\right)\simeq
\operatorname{Coker}\left(
z^{n}(\mathfrak{X}, *)\to z^{n}(X, *)
\right)
\end{equation}
by applying snake lemma to a commutative diagram with two exact rows
\begin{equation*}
\begin{CD}
@. F^{1}z^{n}(\mathfrak{X}, *)@>>> z^{n}(\mathfrak{X}, *)
@>>>\displaystyle\bigoplus_{x\in \mathfrak{X}^{0}}z^{n}(\kappa(x), *)@>>> 0 \\
@. @VVV @VVV @V\simeq VV \\
0@>>>F^{1}z^{n}(X, *)@>>> z^{n}(X, *)
@>>>\displaystyle\bigoplus_{x\in X^{0}}z^{n}(\kappa(x), *).
\end{CD} 
\end{equation*}
Hence we have a distinguish triangle 
\begin{equation}
0\to z^{n-1}(Z, *)\to F^{1}z^{n}(\mathfrak{X}, *)\to F^{1}z^{n}(X, *)
\end{equation}
by \cite[Theorem 1.7]{L1}. Let $x\in X^{(1)}$ and $Z=\Bar{\{x}\}$. Let $\tilde{Z}\to Z$ be the normalization of $Z$. Then
we have the map of distinguished triangles
\begin{equation*}
\begin{CD}
z^{n-2}\left(\tilde{Z}\times_{\mathcal{O}_{k}} k, *\right)@>>>z^{n-1}(\tilde{Z}, *)
@>>>z^{n-1}\left(\tilde{Z}\times_{\mathcal{O}_{k}}K, *\right) \\
@VVV @VVV @VVV \\
z^{n-1}(Y, *) @>>> F^{1}z^{n}(\mathfrak{X}, *)@>>> F^{1}z^{n}(X, *)
\end{CD}
\end{equation*}
by \cite[Proposition 1.7]{F}. Therefore the morphism (\ref{mil}) is induced by the morphism
\begin{math}
\displaystyle\bigoplus_{x\in X^{(1)}}\operatorname{K}^{n}_{M}(\kappa(x))\to
\displaystyle\bigoplus_{x\in Y^{(1)}}\operatorname{K}^{n-1}_{M}(\kappa(x))
\end{math}
which is composite of the symbol map and the norm map by \cite[Lemma 3.2]{Ge-L} and \cite[Lemma 3.4.4]{Su-V}. 

For a point $y\in Y_{0}$, there exists $x\in X_{0}$ such that the residue field $\kappa(x)$ is 
the quotient field of a Henselian discrete valuation ring
\begin{math}
\bar{\{x\}}\subset \mathfrak{X}
\end{math}
with the residue field $\kappa(y)$ (c.f, \cite[p.262, The proof of Lemma (3.11) (2)]{Sa}). Therefore the
statement follows.
\end{proof}
\begin{cor}\upshape\label{vm}
Let notations be same as Proposition \ref{sur}. Then 
\begin{equation*}
\operatorname{H}^{n+2}_{Zar}\left(\mathfrak{X}, \mathbb{Z}(n)\right)=0. 
\end{equation*}
\end{cor}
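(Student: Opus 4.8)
The plan is to feed Proposition \ref{sur} into the localization exact sequence for the pair $(\mathfrak{X}, Y)$. Since $\mathfrak{X}$ is connected and regular it is integral, and (as its generic fibre $X$ is nonempty) it is flat over $\spec\mathcal{O}_{K}$; hence the closed fibre $Y = \mathfrak{X}\otimes_{\mathcal{O}_{K}}F$ is the Cartier divisor cut out by a uniformizer, a closed subscheme of codimension $1$ with open complement $j\colon X\hookrightarrow\mathfrak{X}$. Geisser's localization theorem for Bloch's cycle complex over a Dedekind ring (\cite[p.779, Theorem 3.2 (a)]{Ge}, already used in the proof of Proposition \ref{sur}) then yields a long exact sequence
\begin{equation*}
\cdots\to\operatorname{H}^{n+1}_{Zar}(X, \mathbb{Z}(n))\xrightarrow{\partial}\operatorname{H}^{n}_{Zar}(Y, \mathbb{Z}(n-1))\xrightarrow{i_{*}}\operatorname{H}^{n+2}_{Zar}(\mathfrak{X}, \mathbb{Z}(n))\xrightarrow{j^{*}}\operatorname{H}^{n+2}_{Zar}(X, \mathbb{Z}(n))\to\cdots,
\end{equation*}
the shift by $2$ reflecting the codimension of $Y$ in $\mathfrak{X}$.

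First I would check that the connecting map $\partial$ in this sequence is the morphism (\ref{mil}) of Proposition \ref{sur}; this should be essentially formal, since (\ref{mil}) was itself produced from precisely this localization data — the map of triangles relating $z^{n-1}(Y,*)$, $F^{1}z^{n}(\mathfrak{X},*)$ and $F^{1}z^{n}(X,*)$ together with the normalization argument. Granting this, Proposition \ref{sur} says $\partial$ is surjective, so by exactness $i_{*} = 0$, and therefore $\operatorname{H}^{n+2}_{Zar}(\mathfrak{X}, \mathbb{Z}(n))$ injects into $\operatorname{H}^{n+2}_{Zar}(X, \mathbb{Z}(n))$. Thus it remains to prove $\operatorname{H}^{n+2}_{Zar}(X, \mathbb{Z}(n)) = 0$.

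For this, note that $X$ is an open subscheme of the regular surface $\mathfrak{X}$ lying over the one-dimensional $\spec\mathcal{O}_{K}$, so $\operatorname{dim}X = 1$ and $X^{(s)} = \varnothing$ for $s\geq 2$. Hence the coniveau filtration $F^{\bullet}z^{n}(X,*)$ has only the two graded pieces $\operatorname{gr}^{0}z^{n}(X,*) = \bigoplus_{x\in X^{(0)}}z^{n}(\kappa(x),*)$ and $\operatorname{gr}^{1}z^{n}(X,*) = F^{1}z^{n}(X,*)\simeq\bigoplus_{x\in X^{(1)}}z^{n-1}(\kappa(x),*)[-2]$, exactly as in the proof of Proposition \ref{sur}; computing cohomology in degree $n+2$ reduces the claim to $\operatorname{H}^{n+2}_{Zar}(\kappa(x), \mathbb{Z}(n)) = 0$ for $x\in X^{(0)}$ and $\operatorname{H}^{n}_{Zar}(\kappa(x), \mathbb{Z}(n-1)) = 0$ for $x\in X^{(1)}$. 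Both vanish because the motivic cohomology of a field is concentrated in degrees at most the weight (the same input used in the proof of Proposition \ref{supp}). This gives $\operatorname{H}^{n+2}_{Zar}(X, \mathbb{Z}(n)) = 0$, hence $\operatorname{H}^{n+2}_{Zar}(\mathfrak{X}, \mathbb{Z}(n)) = 0$. The only step I expect to require genuine care is the identification of $\partial$ with the morphism (\ref{mil}) — i.e. verifying that the boundary of the localization sequence is the composite of symbol and norm maps analyzed in Proposition \ref{sur}; the remaining steps are routine.
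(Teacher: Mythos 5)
Your proposal is correct and is essentially the paper's own argument: the proof in the text simply cites Proposition \ref{sur} together with Geisser's localization triangle $i_{*}\mathbb{Z}(n-1)[-2]\to\mathbb{Z}(n)^{\mathfrak{X}}\to\mathbf{R}j_{*}\mathbb{Z}(n)^{X}$ (\cite[Corollary 3.3 a)]{Ge}), whose long exact sequence is exactly what you write, with the boundary map being the morphism (\ref{mil}) by its very construction in the proof of Proposition \ref{sur}. Your only addition is to spell out the vanishing $\operatorname{H}^{n+2}_{Zar}(X,\mathbb{Z}(n))=0$ for the one-dimensional generic fibre via the coniveau filtration and the vanishing of field motivic cohomology above the weight, a step the paper leaves implicit.
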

\begin{proof}
The statement follows from Proposition \ref{sur} and \cite[p.780, Corollary 3.3 a)]{Ge}. 
\end{proof}
\begin{thm}\upshape
\label{CDVRS}
Let $\mathfrak{X}$ be same as Proposition \ref{sur}.
Then Problem \ref{prob} (iii) for $\mathfrak{X}$ is true.
\end{thm}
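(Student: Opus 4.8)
The plan is to carry out the argument of Propositions \ref{supp} and \ref{rl2} globally over $\mathfrak{X}$, with Corollary \ref{vm} taking the place of the vanishing of the high-degree Zariski motivic cohomology of a local ring, and with Proposition \ref{sur} supplying the surjectivity needed to identify the kernel. Throughout put $m=n-1$, so that $\operatorname{H}^{n}_{\operatorname{B}}(-)=\operatorname{H}^{m+2}_{et}(-,\mathbb{Z}(m))_{tor}$, write $k(\mathfrak{X})$ for the function field of $\mathfrak{X}$ (the ``$K$'' of Problem \ref{prob} (iii)), and let $X=\mathfrak{X}\otimes_{\mathcal{O}_{K}}K$ and $Y=\mathfrak{X}\otimes_{\mathcal{O}_{K}}F$ be the generic and closed fibers, where $F$ is the residue field of $\mathcal{O}_{K}$; here $X$ is a regular integral scheme of dimension one (to which Proposition \ref{supp} already applies), $Y$ is a one-dimensional closed subscheme of codimension one, a closed point of $\mathfrak{X}$ has codimension two, and $\mathfrak{X}^{(1)}$ consists of the generic points of the horizontal and vertical prime divisors.

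First I would prove left exactness. As in the proof of Proposition \ref{rl2}, for any nonempty open $j\colon V\hookrightarrow\mathfrak{X}$ with one-dimensional closed complement $i\colon Z\hookrightarrow\mathfrak{X}$, after stratifying $Z$ into regular locally closed pieces the purity quasi-isomorphism (\ref{puri}) of Proposition \ref{tru2} identifies $\tau_{\leq m+2}\mathbf{R}i^{!}\mathbb{Z}(m)_{et}$ with $\tau_{\leq m+2}(\mathbb{Z}(m-1)_{et}[-2])$, so the torsion part of $\operatorname{H}^{m+2}_{Z}(\mathfrak{X},\mathbb{Z}(m))$ contributes nothing to the kernel of $\operatorname{H}^{m+2}_{et}(\mathfrak{X},\mathbb{Z}(m))\to\operatorname{H}^{m+2}_{et}(V,\mathbb{Z}(m))$; iterating over a codimension filtration and passing to the limit over dense opens shows that $\operatorname{H}^{n}_{\operatorname{B}}(\mathfrak{X})\to\operatorname{H}^{n}_{\operatorname{B}}(k(\mathfrak{X}))$ is injective. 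Its image lies in $\operatorname{Ker}\bigl(\operatorname{H}^{n}_{\operatorname{B}}(k(\mathfrak{X}))\to\prod_{x\in\mathfrak{X}_{(0)}}\operatorname{H}^{n}_{\operatorname{B}}(k(\mathcal{O}_{\mathfrak{X},\bar x}))\bigr)$ because $\operatorname{H}^{n}_{\operatorname{B}}(\mathcal{O}_{\mathfrak{X},\bar x})=0$: indeed $\operatorname{H}^{m+2}_{et}(\mathcal{O}_{\mathfrak{X},\bar x},\mathbb{Z}(m))=\operatorname{H}^{m+2}_{Zar}(\mathcal{O}_{\mathfrak{X},\bar x},\mathbb{Z}(m))=\mathcal{H}^{m+2}(\mathbb{Z}(m)_{Zar})(\mathcal{O}_{\mathfrak{X},\bar x})=0$, since $\mathcal{O}_{\mathfrak{X},\bar x}$ is a strictly henselian local ring and $\mathcal{H}^{t}(\mathbb{Z}(m)_{Zar})=0$ for $t\geq m+2$.

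Next, for a dense open $j\colon V\hookrightarrow\mathfrak{X}$ Proposition \ref{tru2} gives
\begin{equation*}
\operatorname{H}^{m+2}_{et}\bigl(\mathfrak{X},\tau_{\leq m+1}\mathbf{R}j_{*}\mathbb{Z}(m)\bigr)=\operatorname{Ker}\Bigl(\operatorname{H}^{m+2}_{et}(V,\mathbb{Z}(m))\to\Gamma\bigl(\mathfrak{X},\mathbf{R}^{m+2}j_{*}\mathbb{Z}(m)_{et}\bigr)\Bigr),
\end{equation*}
and, exactly as in the proof of Proposition \ref{supp}, the injectivity $\mathcal{F}\hookrightarrow\prod_{x\in\mathfrak{X}}(u_{x})_{*}(u_{x})^{*}\mathcal{F}$ of \'etale sheaves converts the global-section condition into the pointwise conditions at the closed points $\mathfrak{X}_{(0)}$, while the residue condition at $\mathfrak{X}^{(1)}$ is what allows a class over $k(\mathfrak{X})$ to be spread out to a class over a suitable $V$. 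Hence a class in the middle term of the sequence lifts to $\operatorname{H}^{m+2}_{et}(\mathfrak{X},\tau_{\leq m+1}\mathbf{R}j_{*}\mathbb{Z}(m))$ for some $V$, and it remains only to show that $\operatorname{H}^{m+2}_{et}(\mathfrak{X},\mathbb{Z}(m))_{tor}\to\operatorname{H}^{m+2}_{et}(\mathfrak{X},\tau_{\leq m+1}\mathbf{R}j_{*}\mathbb{Z}(m))_{tor}$ is surjective, i.e. that the connecting map $\operatorname{H}^{m+2}_{et}(\mathfrak{X},\tau_{\leq m+1}\mathbf{R}j_{*}\mathbb{Z}(m))_{tor}\to\operatorname{H}^{m+1}_{et}(Z,\mathbb{Z}(m-1))$ of the distinguished triangle (\ref{diset}) (with $Z=\mathfrak{X}\setminus V$) vanishes. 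This is where properness and the Henselian hypothesis enter through Corollary \ref{vm} and Proposition \ref{sur}: by the quasi-isomorphism (\ref{quasiil}) this obstruction is computed, stratum by stratum, from motivic cohomology with $\mathbb{Z}(m-1)$-coefficients of the one-dimensional $Z$, and on the vertical part the relevant boundary is the composite of symbol and norm maps analysed in the proof of Proposition \ref{sur}; the surjectivity of (\ref{mil}) there --- equivalently, $\operatorname{H}^{m+2}_{Zar}(\mathfrak{X},\mathbb{Z}(m))=0$ of Corollary \ref{vm} --- is exactly what makes the obstruction vanish on torsion. Combining the three steps yields the exact sequence.

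The main obstacle is precisely this last step. In the local settings of Propositions \ref{supp} and \ref{rl2} the closed stratum is either zero-dimensional or sits inside a local ring, so its Zariski cohomology collapses; here the curve $Z$ carries genuine $\operatorname{H}^{1}_{Zar}$ with Milnor $K$-sheaf coefficients, and showing that the resulting contributions to the obstruction cancel on torsion is exactly the content that Corollary \ref{vm} and Proposition \ref{sur} --- together with the fact, used in the proof of Proposition \ref{sur}, that a closed point of $Y$ lifts to a Henselian discrete valuation ring inside $\mathfrak{X}$ with the same residue field --- are designed to supply. Matching the cycle-theoretic normalization argument of Proposition \ref{sur} with the \'etale motivic formalism of Proposition \ref{tru2} is the technical heart of the proof.
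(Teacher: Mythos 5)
Your overall architecture (generic fiber $X$ versus closed fiber $Y$, the purity triangle of Proposition \ref{tru2}, truncation control via Lemma \ref{tru}, and Propositions \ref{supp}, \ref{sur} and Corollary \ref{vm} as the global inputs) matches the paper, but the two places where you actually discharge the difficulty have genuine gaps. First, your injectivity step: you claim that, for an arbitrary dense open $V$ with one-dimensional complement $Z$, purity plus a stratification shows that ``the torsion part of $\operatorname{H}^{m+2}_{Z}(\mathfrak{X},\mathbb{Z}(m))$ contributes nothing to the kernel''. Purity only identifies $\operatorname{H}^{m+2}_{Z}(\mathfrak{X},\mathbb{Z}(m))$ (in the relevant range) with groups like $\operatorname{H}^{m}_{et}(Z,\mathbb{Z}(m-1))$, which are in general large and full of torsion (for $m=2$ this is $\operatorname{Pic}(Z)$); whether their images in $\operatorname{H}^{m+2}_{et}(\mathfrak{X},\mathbb{Z}(m))$ vanish is exactly the question of surjectivity of $\operatorname{H}^{m+1}_{et}(V,\mathbb{Z}(m))\to\operatorname{H}^{m+2}_{Z}(\mathfrak{X},\mathbb{Z}(m))$, and that is not formal. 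Note that your argument never uses properness over the Henselian base, so if it worked it would prove injectivity of $\operatorname{H}^{n}_{\operatorname{B}}$ into the generic point for any two-dimensional regular scheme covered by Proposition \ref{tru2}, which is not true (already for smooth proper surfaces over an algebraically closed field, torsion zero-cycle classes give nontrivial kernel). In the paper this is precisely where Proposition \ref{sur} enters: the surjectivity of (\ref{mil}) (equivalently Corollary \ref{vm}), transported through the triangle for $j\colon X\to\mathfrak{X}$, $i\colon Y\to\mathfrak{X}$, is what makes the first map of the sequence (\ref{exactmain}) injective; and Proposition \ref{sur} in turn needs Saito's lemma that closed points of $Y$ lift to Henselian discrete valuation rings inside $\mathfrak{X}$.

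Second, your middle-exactness step reduces to the assertion that the connecting map $\operatorname{H}^{m+2}_{et}(\mathfrak{X},\tau_{\leq m+1}\mathbf{R}j_{*}\mathbb{Z}(m))_{tor}\to\operatorname{H}^{m+1}_{et}(Z,\mathbb{Z}(m-1))$ ``vanishes'', and you attribute this vanishing to Corollary \ref{vm} and Proposition \ref{sur}. That is a misattribution and, as stated, false: the connecting map is the residue map and does not vanish (its kernel is what one is computing); what one shows is that the \emph{particular} class under consideration, whose residues at all points of $\mathfrak{X}^{(1)}$ (in particular at $Y^{(0)}\subset\mathfrak{X}^{(1)}$) are zero, maps to zero, and for this one needs the injectivity of $\operatorname{H}^{n+1}_{et}(Y,\mathbb{Z}(n-1))\to\bigoplus_{x\in Y^{(0)}}\operatorname{H}^{n+1}_{et}(\kappa(x),\mathbb{Z}(n-1))$, which the paper obtains from resolution of singularities of curves together with Proposition \ref{supp}; Corollary \ref{vm} plays no role here, its role being the injectivity discussed above. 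Moreover, by working with arbitrary small opens $V$ you make $Z$ a possibly singular curve mixing horizontal and vertical components, and you never address the Gersten-type injectivity for such a $Z$; the paper avoids this by fixing $V=X$ (so $Z=Y$) and handling the horizontal codimension-one points separately, by applying Proposition \ref{supp} to the regular one-dimensional scheme $X$ to pass from $\operatorname{H}^{n+2}_{et}(X,\mathbb{Z}(n))$ to $k(\mathfrak{X})$ and the residues along $X^{(1)}$. Until these two points are repaired along those lines, the proposal does not constitute a proof.
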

\begin{proof}
Since 
\begin{math}
\mathcal{H}^{n+2}\left(
\mathbb{Z}(n)^{\mathfrak{X}}_{et}
\right)=0,
\end{math}
we have a distinguished triangle
\begin{equation*}
\cdots\to
\mathbb{Z}(n)^{\mathfrak{X}}_{et}\to
\tau_{\leq n+1}\mathbf{R}j_{*}\mathbb{Z}(n)_{et}\to
i_{*}\left(
\tau_{\leq n+1}\left(
(\mathbf{R}i^{!}\mathbb{Z}(n)_{et})[+1]
\right)\right)
\to\cdots
\end{equation*}
and a quasi-isomorphism
\begin{align*}
&\tau_{\leq n+1}\left(
(\mathbf{R}i^{!}\mathbb{Z}(n)_{et})[+1]
\right)
=
\left(
\tau_{\leq n+2}
\mathbf{R}i^{!}
\mathbb{Z}(n)_{et}
\right)[+1] \\
\simeq
&\left(\tau_{\leq n+2}\left(
\mathbb{Z}(n-1)_{et}[-2]
\right)\right)[+1]
=
\tau_{\leq n+1}\left(
\mathbb{Z}(n-1)_{et}[-1]
\right)
\end{align*}
by Proposition \ref{tru2}. Since the homomorphism
\begin{align*}
&\operatorname{H}^{n+2}_{et}\left(
\mathfrak{X},
i_{*}\left(
\tau_{\leq n+1}\left(
(\mathbf{R}i^{!}\mathbb{Z}(n)_{et})[+1]
\right)
\right)
\right)\\
=
&\operatorname{H}^{n+2}_{et}\left(
Y,
\tau_{\leq n+1}\left(
(\mathbf{R}i^{!}\mathbb{Z}(n)_{et})[+1]
\right)
\right)\\
=
&\operatorname{H}^{n+2}_{et}\left(
Y,
\tau_{\leq n+1}\left(
\mathbb{Z}(n-1)_{et}[-1]
\right)
\right)\\
\hookrightarrow
&\operatorname{H}^{n+2}_{et}\left(
Y, \mathbb{Z}(n-1)_{et}[-1] 
\right)
=
\operatorname{H}^{n+1}_{et}
\left(Y,
\mathbb{Z}(n-1)
\right)
\end{align*}
is an injective by Lemma \ref{tru}, we have an exact sequence
\begin{equation}
\label{exactmain}
\operatorname{H}_{et}^{n+2}\left(
\mathfrak{X}, \mathbb{Z}(n)
\right)\to
\operatorname{H}_{et}^{n+2}\left(
\mathfrak{X}, \tau_{\leq n+1}\mathbf{R}j_{*}\mathbb{Z}(n)
\right)\to
\operatorname{H}_{et}^{n+1}\left(
Y, \mathbb{Z}(n-1)
\right)
\end{equation}
and the first map of the sequence (\ref{exactmain}) is an injective by Proposition 
\ref{sur}.
Moreover we have
\begin{align*}
&\operatorname{H}_{et}^{n+2}\left(
\mathfrak{X}, \tau_{\leq n+1}\mathbf{R}j_{*}\mathbb{Z}(n)
\right) \\
=
&\operatorname{Ker}\Bigl(
\operatorname{H}_{et}^{n+2}\left(
X, \mathbb{Z}(n)
\right)\to
\prod_{x\in \mathfrak{X}_{(0)}}
\operatorname{H}_{et}^{n+2}\left(
k(\mathcal{O}_{\mathfrak{X}, \bar{x}}), \mathbb{Z}(n)
\right)
\Bigr)
\end{align*}

and the homomorphism
\begin{equation*}
\operatorname{H}_{et}^{n+1}\left(
Y, \mathbb{Z}(n-1)
\right)\to
\bigoplus_{x\in Y^{(0)}}
\operatorname{H}_{et}^{n+1}\left(
\kappa(x), \mathbb{Z}(n-1)
\right)
\end{equation*}
is an injective by a resolution of singularities
of curves and Proposition \ref{supp}. Moreover
\begin{math}
Y^{(0)}\subset \mathfrak{X}^{(1)}. 
\end{math}
Therefore the statement follows from Proposition \ref{supp}.

\end{proof}
\begin{cor}\upshape
\label{GSS}
Let notations be same as above. Then we have
\begin{equation}\label{int}
\operatorname{H}_{B}^{n}(\mathfrak{X})=\bigcap_{x\in \mathfrak{X}_{(0)}}
\operatorname{H}_{B}^{n}(\mathcal{O}_{\mathfrak{X}, x}).
\end{equation}
Suppose that $\mathcal{O}_{K}$ is a discrete valuation ring which is
an essentially of finite type over a field. 
Suppose that $\mathfrak{X}$ is smooth over $\spec\mathcal{O}_{K}$. 
Then the sequence
\begin{equation}\label{GSsur}
0 \to \operatorname{H}_{B}^{n}(\mathfrak{X}) \to
\operatorname{Ker}\Bigl(
\operatorname{H}_{B}^{n}(k(\mathfrak{X}))\to
\prod_{x\in \mathfrak{X}^{(1)}}
\operatorname{H}_{B}^{n}(k(\mathcal{O}_{\mathfrak{X}, \bar{x}}))
\Bigr)\to
\bigoplus_{x\in \mathfrak{X}^{(1)}}
\operatorname{H}_{B}^{n-1}(\kappa(x))
\end{equation}
is exact.
\end{cor}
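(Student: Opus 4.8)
The plan is to derive both assertions from Theorem~\ref{CDVRS} (that is, Problem~\ref{prob}~(iii) for $\mathfrak{X}$), together with Proposition~\ref{rl2} and Proposition~\ref{supp} applied to the local rings of $\mathfrak{X}$, and Proposition~\ref{leD} (passed to the limit via Remark~\ref{direct}) for the vanishing of $\operatorname{H}^{m}_{\operatorname{B}}$ on strictly Henselian local rings. The organizing observation is that, for each point $x$ of $\mathfrak{X}$, the relevant Gersten sequence identifies $\operatorname{H}^{n}_{\operatorname{B}}(\mathcal{O}_{\mathfrak{X},x})$ with the subgroup of $\operatorname{H}^{n}_{\operatorname{B}}(k(\mathfrak{X}))$ consisting of the classes whose residue (boundary) at every codimension-one point of $\spec\mathcal{O}_{\mathfrak{X},x}$ vanishes: indeed the extra condition ``$z$ dies in $\operatorname{H}^{n}_{\operatorname{B}}(K_{\bar x})$'' occurring in Problem~\ref{prob}~(iii) follows from the vanishing of the residues, since $\operatorname{H}^{n}_{\operatorname{B}}(\mathcal{O}_{\mathfrak{X},\bar x})=0$ and the Gersten sequence of the strictly Henselian local ring $\mathcal{O}_{\mathfrak{X},\bar x}$ then embeds $\operatorname{H}^{n}_{\operatorname{B}}(K_{\bar x})$ into $\bigoplus_{\mathfrak{q}}\operatorname{H}^{n-1}_{\operatorname{B}}(\kappa(\mathfrak{q}))$ ($\mathfrak{q}$ ranging over the height-one primes of $\mathcal{O}_{\mathfrak{X},\bar x}$) via maps compatible with the global residues of $z$ by flat base change of the boundary maps (cf.\ Proposition~\ref{Gysin}).

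Granting this, (\ref{int}) follows at once. The inclusion $\operatorname{H}^{n}_{\operatorname{B}}(\mathfrak{X})\subseteq\bigcap_{x\in\mathfrak{X}_{(0)}}\operatorname{H}^{n}_{\operatorname{B}}(\mathcal{O}_{\mathfrak{X},x})$ is functoriality of $\operatorname{H}^{n}_{\operatorname{B}}$ in the flat maps $\spec\mathcal{O}_{\mathfrak{X},x}\to\mathfrak{X}$ (each target group injecting into $\operatorname{H}^{n}_{\operatorname{B}}(k(\mathfrak{X}))$ by Proposition~\ref{rl2}). Conversely, by Proposition~\ref{rl2} a class lying in every $\operatorname{H}^{n}_{\operatorname{B}}(\mathcal{O}_{\mathfrak{X},x})$, $x$ closed, has vanishing residue at every height-one prime of every $\mathcal{O}_{\mathfrak{X},x}$; since every codimension-one point of $\mathfrak{X}$ specializes to a closed point of $\mathfrak{X}$, such a class has vanishing residue at every point of $\mathfrak{X}^{(1)}$, hence lies in $\operatorname{H}^{n}_{\operatorname{B}}(\mathfrak{X})$ by Theorem~\ref{CDVRS}.

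For the exact sequence (\ref{GSsur}): the outer terms are those of Problem~\ref{prob}~(iii) for $\mathfrak{X}$ (writing $X$, $Y$ for the generic and special fibres of $\pi$, which under the hypotheses are smooth over $K$ and over $F$, one has $\mathfrak{X}^{(1)}=X^{(1)}\sqcup Y^{(0)}$, so $Y^{(0)}\subset\mathfrak{X}^{(1)}$ and the last term matches), so by Theorem~\ref{CDVRS} it remains only to rewrite the middle term. I would show that $\operatorname{Ker}(\operatorname{H}^{n}_{\operatorname{B}}(k(\mathfrak{X}))\to\prod_{x\in\mathfrak{X}^{(1)}}\operatorname{H}^{n}_{\operatorname{B}}(k(\mathcal{O}_{\mathfrak{X},\bar x})))$ — the kernel over the codimension-one points, with the $\mathcal{O}_{\mathfrak{X},\bar x}$ being strictly Henselian discrete valuation rings — coincides with the kernel over the $0$-dimensional points $\mathfrak{X}_{(0)}$ furnished by the proof of Theorem~\ref{CDVRS}. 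Both are to be identified, using the vanishing of $\operatorname{H}^{m}_{\operatorname{B}}$ on the strictly Henselian local rings in sight (Proposition~\ref{leD}, Remark~\ref{direct}) together with the compatibility of the boundary maps under the base changes involved, with one and the same subgroup of $\operatorname{H}^{n}_{\operatorname{B}}(k(\mathfrak{X}))$; the hypotheses ``$\mathcal{O}_K$ essentially of finite type over a field'' and ``$\mathfrak{X}$ smooth over $\spec\mathcal{O}_K$'' serve precisely to place all these local rings in the scope of Proposition~\ref{leD}, Proposition~\ref{supp} and Proposition~\ref{rl2}. The boundary map in (\ref{GSsur}) is then the one induced by the residues, as normalized in Proposition~\ref{Gysin}.

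The step I expect to be the main obstacle is this last matching of the middle term: reconciling the description of $\operatorname{H}^{n}_{\operatorname{B}}(\mathfrak{X})$ coming from the $2$-dimensional strictly Henselian local rings at the closed points of $\mathfrak{X}$ (which is what the proof of Theorem~\ref{CDVRS} directly provides) with the description coming from the strictly Henselian discrete valuation rings at the codimension-one points, and checking that the two resulting boundary maps into $\bigoplus_{x\in\mathfrak{X}^{(1)}}\operatorname{H}^{n-1}_{\operatorname{B}}(\kappa(x))$ agree. This is ultimately bookkeeping with the flat base-change behaviour of the residue/Gysin maps (Proposition~\ref{Gysin}) and the vanishing of $\operatorname{H}^{m}_{\operatorname{B}}$ on strictly Henselian local rings; once it is in place, (\ref{int}) and (\ref{GSsur}) are both immediate consequences of Theorem~\ref{CDVRS}, Proposition~\ref{rl2} and Proposition~\ref{supp}.
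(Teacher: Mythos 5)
Your argument hinges on a claim that the paper neither proves nor could prove with the results you cite, namely your ``organizing observation'' that $\operatorname{H}^{n}_{\operatorname{B}}(\mathcal{O}_{\mathfrak{X},x})$ is exactly the group of classes in $\operatorname{H}^{n}_{\operatorname{B}}(k(\mathfrak{X}))$ with vanishing residues, the condition ``$z$ dies in $\operatorname{H}^{n}_{\operatorname{B}}(K_{\bar{x}})$'' being automatic because ``the Gersten sequence of the strictly Henselian local ring $\mathcal{O}_{\mathfrak{X},\bar{x}}$ embeds $\operatorname{H}^{n}_{\operatorname{B}}(K_{\bar{x}})$ into $\bigoplus_{\mathfrak{q}}\operatorname{H}^{n-1}_{\operatorname{B}}(\kappa(\mathfrak{q}))$.'' The Gersten-type sequences actually available (Proposition \ref{rl2}, Proposition \ref{mixGS}, Theorem \ref{DWE}) all have as middle term a kernel taken over strict henselizations; applied to a ring that is already strictly Henselian that kernel is zero, so they yield no such embedding. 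What you are invoking is the injectivity part of a full Bloch--Ogus/Gersten resolution over the two-dimensional strictly Henselian local ring. For torsion prime to the residue characteristic this could be extracted from \cite[Theorem C]{P}, but the first assertion (\ref{int}) carries no equicharacteristic or smoothness hypothesis, and for the $p$-part in characteristic $p$ no such injectivity appears anywhere in the paper --- the extra ``dies in $k(\mathcal{O}_{\mathfrak{X},\bar{x}})$'' condition retained in Theorem \ref{DWE} and in the middle term of (\ref{GSsur}) itself is precisely the wild phenomenon your observation would abolish. Fortunately (\ref{int}) does not need any of this: since the kernel condition at the closed point occurs on both sides, it follows by directly matching the description of $\operatorname{H}^{n}_{\operatorname{B}}(\mathcal{O}_{\mathfrak{X},x})$ furnished by Proposition \ref{rl2} against the description of $\operatorname{H}^{n}_{\operatorname{B}}(\mathfrak{X})$ furnished by Theorem \ref{CDVRS}, every point of $\mathfrak{X}^{(1)}$ being a height-one prime of some $\mathcal{O}_{\mathfrak{X},x}$ with $x$ closed; this is in fact the paper's proof of (\ref{int}).

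For (\ref{GSsur}) the step you defer as ``bookkeeping'' --- converting the closed-point conditions of Theorem \ref{CDVRS} (or of (\ref{int})) into the codimension-one conditions of the middle term --- is exactly where the substantive inputs enter, and it cannot be done with Theorem \ref{CDVRS}, Propositions \ref{rl2}, \ref{supp}, \ref{leD} and base-change compatibilities alone. The paper's route is: by (\ref{int}), $\operatorname{H}^{n}_{\operatorname{B}}(\mathfrak{X})=\bigcap_{x\in\mathfrak{X}_{(0)}}\operatorname{H}^{n}_{\operatorname{B}}(\mathcal{O}_{\mathfrak{X},x})$; the hypotheses that $\mathcal{O}_{K}$ is essentially of finite type over a field and that $\mathfrak{X}$ is smooth over $\spec\mathcal{O}_{K}$ make each $\mathcal{O}_{\mathfrak{X},x}$ an equicharacteristic two-dimensional regular local ring smooth over $\mathcal{O}_{K}$, so that its prime-to-$p$ part is computed by \cite[Theorem C]{P} (vanishing of residues suffices, with injectivity into $\operatorname{H}^{n}_{\operatorname{B}}(k(\mathfrak{X}))$) and its $p$-part by Theorem \ref{DWE} (vanishing of residues together with dying in the strictly Henselian discrete valuation rings at the height-one primes); assembling these over all closed points gives exactly the middle term of (\ref{GSsur}). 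You never invoke \cite{P} or Theorem \ref{DWE}, and you misassign the role of the extra hypotheses (they are needed for those two results, not for Propositions \ref{leD}, \ref{supp}, \ref{rl2}); without them the matching of middle terms that you yourself flag as the main obstacle is not bookkeeping but the actual content of the corollary, so the proposal has a genuine gap.
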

\begin{proof}
The equation (\ref{int}) follows from Proposition \ref{rl2} and Theorem \ref{CDVRS}. 
If $\mathfrak{X}$ satisfies the assumption of Corollary \ref{GSS},
then the exact sequence (\ref{GSsur})
follows from (\ref{int}), \cite[Theorem C]{P} and Theorem \ref{DWE}. 
\end{proof}
\section{Logarithmic Hodge-Witt cohomology of Henselian regular local rings}
The objective of this section is to prove Theorem \ref{hendR}. 
Theorem \ref{hendR} is used to prove Proposition \ref{GBsur}.
\begin{lem}\upshape\label{dRex}
Let $A$ be a regular local ring over $\mathbb{F}_{p}$.
Let $t$ be a regular element of $A$, $\mathfrak{n}=(t)$ and
$B=A/\mathfrak{n}$. Then we have an exact sequence
\begin{equation*}
0
\to
(
\mathfrak{n}\Omega^{i}_{A}+d\Omega^{i-1}_{A}
)
/d\Omega^{i-1}_{A}
\to
\Omega^{i}_{A}/d\Omega^{i-1}_{A}
\xrightarrow{\bar{g}_{i}}
\Omega^{i}_{B}/d\Omega^{i-1}_{B}
\to
0
\end{equation*}
where the homomorphism $\bar{g}_{i}$ is induced by the natural homomorphism
\begin{math}
g_{i}: \Omega^{i}_{A}
\xrightarrow{}
\Omega^{i}_{B} 
. 
\end{math}
\end{lem}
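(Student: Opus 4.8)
The plan is to establish the exact sequence by analyzing the natural surjection $g_i: \Omega^i_A \to \Omega^i_B$ and understanding its kernel, then passing to the quotients by exact forms. First I would recall the standard fact that for $B = A/(t)$ with $t$ a regular element, there is an exact sequence of $B$-modules
\begin{equation*}
0 \to t\Omega^i_A + dt \wedge \Omega^{i-1}_A \to \Omega^i_A \to \Omega^i_B \to 0,
\end{equation*}
which comes from the conormal/second fundamental exact sequence $\mathfrak{n}/\mathfrak{n}^2 \to \Omega^1_A \otimes_A B \to \Omega^1_B \to 0$ together with the fact that $A$ (being regular local, hence smooth over the perfect field $\mathbb{F}_p$) has $\Omega^1_A$ locally free, so $\mathfrak{n}/\mathfrak{n}^2 \to \Omega^1_A \otimes_A B$ is injective; taking exterior powers then identifies $\ker(g_i)$ with $t\Omega^i_A + dt\wedge\Omega^{i-1}_A$. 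The surjectivity of $\bar g_i$ on the quotients $\Omega^i_A/d\Omega^{i-1}_A \to \Omega^i_B/d\Omega^{i-1}_B$ is then immediate from surjectivity of $g_i$.

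The main work is to identify the kernel of $\bar g_i$ correctly, i.e. to show that the preimage in $\Omega^i_A/d\Omega^{i-1}_A$ of $0 \in \Omega^i_B/d\Omega^{i-1}_B$ is exactly $(\mathfrak{n}\Omega^i_A + d\Omega^{i-1}_A)/d\Omega^{i-1}_A$ — note that this is $\mathfrak{n}\Omega^i_A = t\Omega^i_A$ alone, \emph{not} $t\Omega^i_A + dt\wedge\Omega^{i-1}_A$. So the key claim to verify is that
\begin{equation*}
g_i^{-1}(d\Omega^{i-1}_B) = t\Omega^i_A + d\Omega^{i-1}_A.
\end{equation*}
The inclusion $\supseteq$ is clear: $t\Omega^i_A \subseteq \ker g_i$ and $d\Omega^{i-1}_A$ maps into $d\Omega^{i-1}_B$ since $g$ commutes with $d$. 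For $\subseteq$, suppose $\omega \in \Omega^i_A$ with $g_i(\omega) = d\eta$ for some $\eta \in \Omega^{i-1}_B$; lift $\eta$ to $\tilde\eta \in \Omega^{i-1}_A$, so $g_i(\omega - d\tilde\eta) = 0$, hence $\omega - d\tilde\eta \in t\Omega^i_A + dt\wedge\Omega^{i-1}_A$. The point is then to absorb the $dt\wedge\Omega^{i-1}_A$ part: for $\alpha \in \Omega^{i-1}_A$, write $dt\wedge\alpha = d(t\alpha) - t\,d\alpha$, so $dt\wedge\alpha \in d\Omega^{i-1}_A + t\Omega^i_A$. This shows $t\Omega^i_A + dt\wedge\Omega^{i-1}_A \subseteq t\Omega^i_A + d\Omega^{i-1}_A$, and therefore $\omega \in t\Omega^i_A + d\Omega^{i-1}_A$ as desired.

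Putting these together, $\bar g_i$ is surjective with kernel $(t\Omega^i_A + d\Omega^{i-1}_A)/d\Omega^{i-1}_A = (\mathfrak{n}\Omega^i_A + d\Omega^{i-1}_A)/d\Omega^{i-1}_A$, which is precisely the asserted exact sequence. I expect the main obstacle to be a careful justification that $\ker g_i = t\Omega^i_A + dt\wedge\Omega^{i-1}_A$, which relies on $\Omega^1_A$ being free over the regular local ring $A$ (so that one can choose a regular system of parameters including $t$ and a compatible basis of differentials, reducing the computation to a Koszul-type statement); once that structural fact is in hand, the remaining manipulations with $dt\wedge\alpha = d(t\alpha) - t\,d\alpha$ are routine. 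One should also note that $\mathbb{F}_p$ being perfect guarantees $A$ is a regular local $\mathbb{F}_p$-algebra with $\Omega^1_{A/\mathbb{F}_p}$ finite free, which is what makes the conormal sequence left-exact here.
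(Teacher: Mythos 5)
Your overall strategy is sound, and the second half of your argument (lift $\eta$ to $\tilde\eta$, subtract $d\tilde\eta$, and absorb $dt\wedge\alpha=d(t\alpha)-t\,d\alpha$ into $d\Omega^{i-1}_{A}+t\Omega^{i}_{A}$) is correct; it is essentially the same mechanism as in the paper, where the conormal contribution consists of the exact forms $da\wedge db_{1}\wedge\cdots\wedge db_{i-1}=d(a\,db_{1}\wedge\cdots\wedge db_{i-1})$ with $a\in\mathfrak{n}$. The genuine gap is in your justification of the key identification $\ker g_{i}=t\Omega^{i}_{A}+dt\wedge\Omega^{i-1}_{A}$, which you yourself single out as the main obstacle. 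A regular local $\mathbb{F}_{p}$-algebra need not be (essentially) smooth over $\mathbb{F}_{p}$, and $\Omega^{1}_{A/\mathbb{F}_{p}}$ need not be finitely generated, let alone free; moreover $t$ is only assumed to be a nonzerodivisor, not part of a regular system of parameters, so $B$ may be non-regular (even non-reduced) and the proposed reduction to a Koszul-type statement via a basis of differentials compatible with $t$ breaks down. Worse, the left exactness you assert for the conormal sequence is false in this generality: for $A=\mathbb{F}_{p}[x]_{(x)}$ and $t=x^{p}$ one has $dt=0$, so $\mathfrak{n}/\mathfrak{n}^{2}\to\Omega^{1}_{A}\otimes_{A}B$ kills the generator $\bar{t}$ and is not injective; left exactness is governed by hypotheses on $B$ (e.g. $B$ $0$-smooth over $\mathbb{F}_{p}$, which is exactly what the paper's cited reference requires), not by regularity of $A$.

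Fortunately, what you actually need is only the inclusion $\ker g_{i}\subseteq t\Omega^{i}_{A}+dt\wedge\Omega^{i-1}_{A}$, and this holds unconditionally: the conormal sequence is always right exact, so $\Omega^{1}_{B}=(\Omega^{1}_{A}\otimes_{A}B)/\langle dt\otimes 1\rangle$, and the standard facts $\Lambda^{i}_{B}(M/N)=\Lambda^{i}_{B}M/(N\wedge\Lambda^{i-1}_{B}M)$ and $\Lambda^{i}_{B}(\Omega^{1}_{A}\otimes_{A}B)=\Omega^{i}_{A}\otimes_{A}B$ give $\Omega^{i}_{B}=\Omega^{i}_{A}/(t\Omega^{i}_{A}+dt\wedge\Omega^{i-1}_{A})$ for any $A$ and any $t$, with no smoothness, freeness, or regularity input. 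With that repair your proof closes, and it is in fact more elementary and more general than the paper's: the paper invokes Popescu's theorem to write $A$ as a filtered colimit of smooth $\mathbb{F}_{p}$-algebras and uses Matsumura's split second fundamental exact sequence to obtain $0\to\mathfrak{n}/\mathfrak{n}^{2}\otimes_{B}\Omega^{i-1}_{B}\to\Omega^{i}_{A}\otimes_{A}B\to\Omega^{i}_{B}\to 0$, and then concludes with the same exactness observation; your corrected route avoids the Popescu reduction and the smoothness hypotheses entirely.
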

\begin{proof}\upshape
$A$ can be written as a filtering inductive limit
\begin{math}
\displaystyle\lim_{\substack{\to\\ \lambda}}A_{\lambda} 
\end{math}
of finitely generated smooth algebras over $\mathbb{F}_{p}$
by Popescu's theorem (\cite{Po}). Let 
$\mathfrak{n}_{\lambda}$ be an ideal $(t)$ of $A_{\lambda}$ and 
$B_{\lambda}=A_{\lambda}/(t)$. 
Then we may assume that $B_{\lambda}$ is $0$-smooth over 
$\mathbb{F}_{p}$.

By
\cite[p.194, Theorem 25.2]{Ma}, 
we have a split exact sequence
\begin{equation*}
0
\to
\mathfrak{n}_{\lambda}/\mathfrak{n}_{\lambda}^{2}
\xrightarrow{\delta_{0}}
\Omega_{A_{\lambda}}\otimes_{A_{\lambda}} B_{\lambda}
\xrightarrow{\alpha_{0}}
\Omega_{B_{\lambda}}
\to 
0
\end{equation*}
where a section $\gamma_{0}$ of $\alpha_{0}$ is defined by 
\begin{equation*}
\gamma_{0}\left(
\bar{b}d\bar{a}
\right)
=
da\otimes \bar{b}
\end{equation*}
for $a, b\in A$.
Therefore we have an exact sequence
\begin{equation*}
0
\to
\mathfrak{n}_{\lambda}/\mathfrak{n}_{\lambda}^{2}
\otimes_{B_{\lambda}}\Omega^{i-1}_{B_{\lambda}}
\xrightarrow{\delta_{i}}
\Omega^{i}_{A_{\lambda}}\otimes_{A_{\lambda}} B_{\lambda}
\xrightarrow{\alpha_{i}}
\Omega^{i}_{B_{\lambda}}
\to 
0
\end{equation*}
where
\begin{equation*}
\delta_{i} 
\left(
\bar{a}\otimes
(
d\bar{b}_{1}
\wedge\cdots\wedge
d\bar{b}_{i-1}
)
\right)
=
\left(
da
\wedge
db_{1}
\wedge\cdots\wedge
db_{i-1}
\right)
\otimes 
\bar{1}
\end{equation*}
for $a\in \mathfrak{n}_{\lambda}$, 
$b_{1},\cdots, b_{i-1}\in A_{\lambda}$
and
\begin{equation*}
\alpha_{i}
\left(
(
dc_{1}
\wedge\cdots\wedge
dc_{i}
)
\otimes
\bar{f}
\right)
=
\bar{f}
d\bar{c}_{1}
\wedge\cdots\wedge
d\bar{c}_{i}
\end{equation*}
for
\begin{math}
c_{1}, \cdots, c_{i}, f\in A_{\lambda}
\end{math}
by \cite[p.284, Theorem C.2]{Ma}.
Hence the sequence
\begin{equation}\label{deRhamex}
0
\to
\mathfrak{n}/\mathfrak{n}^{2}
\otimes_{B}\Omega^{i-1}_{B}
\xrightarrow{\delta_{i}}
\Omega^{i}_{A}\otimes_{A} B
\xrightarrow{\alpha_{i}}
\Omega^{i}_{B}
\to 
0
\end{equation}
is exact.

On the other hand, we have
\begin{align*}
\operatorname{Ker}\left(
\alpha_{i}
\right)
=
\operatorname{Im}\left(
\delta_{i}
\right)
\subset
\left(
d\Omega^{i-1}_{A}+\mathfrak{n}\Omega^{i}_{A}
\right)
/
\mathfrak{n}\Omega^{i}_{A}
\end{align*}
by the sequence (\ref{deRhamex}). So we have
\begin{equation*}
\operatorname{Ker}\left(
g_{i}
\right)
\subset
d\Omega^{i-1}_{A}+\mathfrak{n}\Omega^{i}_{A}.
\end{equation*}
Since 
\begin{equation*}
d\Omega^{i-1}_{B}
=
\operatorname{Im}
\left(
d\Omega^{i-1}_{A}
\to
\Omega^{i}_{A}
\xrightarrow{g_{i}}
\Omega^{i}_{B} 
\right),
\end{equation*}
we have
\begin{equation*}
\operatorname{Ker}\left(
\bar{g}_{i}
\right)
\subset
\left(
d\Omega^{i-1}_{A}+\mathfrak{n}\Omega^{i}_{A}
\right)
/d\Omega^{i-1}_{A}.
\end{equation*}
Therefore the statement follows.
\end{proof}
\begin{lem}\upshape\label{homV}
Let $A$ be a Henselian regular local ring 
and
$X=\spec(A)$. Suppose that $\operatorname{char}(A)=p>0$. 
Then
\begin{equation*}
\operatorname{H}^{1}
\left(
X_{et}, Z\Omega^{i}_{X}
\right)
=
\operatorname{H}^{1}
\left(
X_{et}, d\Omega^{i}_{X}
\right)
=
0
\end{equation*}
for all $i>0$.
\end{lem}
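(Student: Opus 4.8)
The idea is to reduce both vanishing statements to the computation of étale cohomology of coherent-type sheaves on the spectrum of a Henselian local ring, where only $\operatorname{H}^0$ survives. First I would recall the exact sequences relating $Z\Omega^i_X$, $d\Omega^i_X$ and the full de Rham complex. For each $i>0$ there is a short exact sequence of étale sheaves
\begin{equation*}
0\to Z\Omega^i_X\to \Omega^i_X\xrightarrow{d} d\Omega^{i+1}_X\to 0
\end{equation*}
(with $d\Omega^{i+1}_X$ by definition the image of $d$), and likewise $0\to Z\Omega^i_X\to\Omega^i_X\to\Omega^{i+1}_X$, whose cokernel onto the image is $d\Omega^{i+1}_X$. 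So it suffices to show $\operatorname{H}^1(X_{et},\Omega^i_X)=0$ for all $i\geq 0$ together with $\operatorname{H}^0(X_{et},\Omega^i_X)\twoheadrightarrow \operatorname{H}^0(X_{et},d\Omega^{i+1}_X)$ being surjective, because the long exact sequence then forces $\operatorname{H}^1(X_{et},Z\Omega^i_X)\hookrightarrow \operatorname{H}^1(X_{et},\Omega^i_X)=0$; and analogously $\operatorname{H}^1(X_{et},d\Omega^i_X)$ fits between $\operatorname{H}^1(X_{et},\Omega^i_X)=0$ and $\operatorname{H}^2(X_{et},Z\Omega^i_X)$, so the main point is really to control $\operatorname{H}^1$ and $\operatorname{H}^2$ of the locally free sheaf $\Omega^i_X$ and of $Z\Omega^i_X$.

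**Key steps.** The cleanest route: $\Omega^i_X$ is the étale sheaf associated to a coherent (indeed locally free, since $A$ is regular over $\mathbb{F}_p$) $\mathcal{O}_X$-module. For quasi-coherent sheaves, étale and Zariski cohomology agree (\cite[III, Remark 3.8]{M} or the comparison theorem), and on the spectrum of a local ring all higher Zariski cohomology of a quasi-coherent sheaf vanishes. Hence $\operatorname{H}^q(X_{et},\Omega^i_X)=0$ for all $q\geq 1$. This already kills the ambient groups. For $Z\Omega^i_X$ and $d\Omega^i_X$, which are \emph{not} quasi-coherent, I would argue by descending the situation: by Popescu's theorem $A=\varinjlim A_\lambda$ with $A_\lambda$ smooth finite-type $\mathbb{F}_p$-algebras, the sheaves $\Omega^\bullet$, $Z\Omega^\bullet$, $d\Omega^\bullet$ all commute with filtered colimits of rings, and étale cohomology of a Henselian local ring commutes with such limits; so everything reduces to the smooth affine case and then further, via the Henselian hypothesis, to the strictly Henselian (or complete) local case where one can use the explicit structure. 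Concretely: use the short exact sequences above inductively in $i$, starting from the top (for $i$ large, $\Omega^i_X=0$), to peel off $\operatorname{H}^1(Z\Omega^i_X)$ and $\operatorname{H}^1(d\Omega^i_X)$ against the vanishing $\operatorname{H}^{\geq 1}(\Omega^i_X)=0$, provided the connecting maps on $\operatorname{H}^0$ are surjective. The surjectivity $\operatorname{H}^0(X_{et},\Omega^i_X)\to\operatorname{H}^0(X_{et},d\Omega^{i+1}_X)$ is automatic since $X$ is affine and $d\Omega^{i+1}_X$ is by definition the sheaf image, whose global sections over an affine are hit after possibly an étale cover — and here Henselian local plus the colimit argument lets one check it on sections.

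**Main obstacle.** The delicate point is that $Z\Omega^i$ and $d\Omega^i$ are genuinely non-quasi-coherent étale sheaves (Cartier's operator intervenes: $Z\Omega^i$ is an extension involving $\Omega^i$ via $C$), so one cannot directly invoke affine vanishing for them; the whole argument hinges on reducing to the locally free $\Omega^i$ and carefully tracking that the connecting homomorphisms between the $\operatorname{H}^0$'s are surjective so that no $\operatorname{H}^1$-contribution leaks in. I expect the bookkeeping of the induction on $i$ — making sure the exact sequences splice correctly and that Henselian-ness (not merely local-ness) is what guarantees surjectivity on $\operatorname{H}^0_{et}$ — to be the part requiring genuine care; the vanishing $\operatorname{H}^{\geq1}(X_{et},\Omega^i_X)=0$ itself is a soft consequence of the quasi-coherent comparison and the affineness of $X$. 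Lemma \ref{dRex} will presumably be invoked to identify the relevant kernels/cokernels when passing between $A$ and quotients $A/(t)$, which is exactly what is needed to feed an inductive or dévissage argument through the exact sequences.
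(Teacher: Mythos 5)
Your first step coincides with the paper's: $\operatorname{H}^{q}(X_{et},\Omega^{i}_{X})=0$ for $q\geq 1$ by the quasi-coherent comparison between Zariski and \'etale cohomology over the local ring $A$. The rest, however, has a genuine gap. From $0\to Z\Omega^{i}_{X}\to\Omega^{i}_{X}\xrightarrow{d}d\Omega^{i}_{X}\to 0$ the long exact sequence gives $\operatorname{H}^{1}(X_{et},Z\Omega^{i}_{X})\cong\operatorname{Coker}\bigl(\operatorname{H}^{0}(X_{et},\Omega^{i}_{X})\to\operatorname{H}^{0}(X_{et},d\Omega^{i}_{X})\bigr)$, so the surjectivity on $\operatorname{H}^{0}$ that you declare ``automatic since $X$ is affine'' is literally equivalent to the statement being proved. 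It is not automatic: $Z\Omega^{i}_{X}$ and $d\Omega^{i}_{X}$ are not quasi-coherent $\mathcal{O}_{X}$-modules (only modules over $p$-th powers), so affine vanishing and section-level exactness of images do not apply, and neither the Popescu colimit remark nor ``Henselian lets one check it on sections'' produces the surjectivity. Similarly, your top-down induction never closes: the kernel--image sequences you use give no control of $\operatorname{H}^{2}(X_{et},Z\Omega^{i}_{X})$, which you yourself note is needed for $\operatorname{H}^{1}(X_{et},d\Omega^{i}_{X})$, and Henselian-ness never actually enters your argument in a way that does work.

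The missing mechanism is the one the paper uses: Shiho's exact sequences built from the Cartier operator and the logarithmic sheaf. From the sequence relating $\Omega^{i}_{X,\log}$, $Z\Omega^{i}_{X}$ and $\Omega^{i}_{X}$, together with the fact that over a Henselian local ring the \'etale cohomology of the $p$-torsion sheaf $\Omega^{i}_{X,\log}$ is computed on the closed point, where $\operatorname{cd}_{p}(k)\leq 1$, one gets $\operatorname{H}^{2}(X_{et},Z\Omega^{i}_{X})=0$; then $0\to Z\Omega^{i}_{X}\to\Omega^{i}_{X}\to d\Omega^{i}_{X}\to 0$ yields $\operatorname{H}^{1}(X_{et},d\Omega^{i}_{X})=0$; and finally the Cartier sequence $0\to d\Omega^{i-1}_{X}\to Z\Omega^{i}_{X}\to\Omega^{i}_{X}\to 0$ yields $\operatorname{H}^{1}(X_{et},Z\Omega^{i}_{X})=0$. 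This is exactly where the Henselian hypothesis and the residue field enter, neither of which your argument exploits. Your parenthetical remark that ``Cartier's operator intervenes'' is the right instinct: if you actually use the Cartier exact sequence in an upward induction on $i$, starting from $Z\Omega^{0}_{X}\cong\mathcal{O}_{X}$ via Frobenius, you can recover the missing surjectivity on $\operatorname{H}^{0}$; but as written the proposal asserts the decisive step instead of proving it, so it does not constitute a proof.
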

\begin{proof}\upshape
Since 
$\Omega^{i}_{X}$
is a quasi-coherent sheaf of $\mathcal{O}_{X}$-module,
\begin{equation*}
\operatorname{H}^{j}(
X_{Zar},
\Omega^{i}_{X}
)
=
0
\end{equation*}
for all $j>0$ by \cite[p.103, III, Lemma 2.15]{M}. Hence
we have
\begin{equation*}
\operatorname{H}^{j}(
X_{et},
\Omega^{i}_{X}
)
=
0 
\end{equation*}
by \cite[p.48, II, Proposition 1.3]{M} and
\cite[p.114, III, Remark 3.8]{M}. 
Moreover, since $\operatorname{cd}_{p}(k_{et})=1$,
we have
\begin{equation*}
\operatorname{H}^{2}
\left(
X_{et}, \Omega^{i}_{X, \log}
\right)
=
\operatorname{H}^{2}
\left(
k_{et}, i^{*}(
\Omega^{i}_{X, \log}
)
\right)
=0
\end{equation*}
by 
\cite[p.777, the proof of Proposition 2.2.b)]{Ge}.
Therefore we have
\begin{equation*}
\operatorname{H}^{2}\left(
X_{et},
Z\Omega^{i}_{X}
\right)
=
0 
\end{equation*}
by \cite[p.576, Proposition 2.10]{Sh}
and
\begin{equation*}
\operatorname{H}^{1}(
X_{et},
d\Omega^{i}_{X}
)
=
0 
\end{equation*}
by the exact sequence
\begin{equation*}
0
\to
Z\Omega^{i}_{X}
\to
\Omega^{i}_{X}
\to
d\Omega^{i}_{X}
\to 
0.
\end{equation*}
Since the sequence
\begin{equation*}
0
\to
d\Omega^{i-1}_{X}
\to
Z\Omega^{i}_{X}
\to
\Omega^{i}_{X}
\to
0
\end{equation*}
is exact by \cite[p.574, Proposition 2.5]{Sh},
we have
\begin{equation*}
\operatorname{H}^{1}\left(
X_{et}, Z\Omega^{i}_{X}
\right)
=
0.
\end{equation*}
Therefore the statement follows.
\end{proof}
\begin{thm}\upshape
\label{hendR}
Let $A$ be a Henselian regular local ring
over $\mathbb{F}_{p}$, 
$t$ a regular element of $A$ and $B=A/(t)$. 

Then the homomorphism
\begin{equation}\label{pos}
\operatorname{H}^{1}\left(
(\spec A)_{et}, \Omega^{i}_{A, \log}
\right)
\to
\operatorname{H}^{1}\left(
(\spec B)_{et}, \Omega^{i}_{B, \log}
\right)
\end{equation}
is an isomorphism.
\end{thm}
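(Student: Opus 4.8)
The plan is to convert the statement about etale cohomology into one about de Rham modules via the Cartier operator, and then to reduce the comparison to an explicit lifting problem solved by a $t$-adic iteration. Write $X=\spec A$ and $Y=\spec B$.

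First I would use the exact sequence of etale sheaves $0\to\Omega^{i}_{X,\log}\to Z\Omega^{i}_{X}\xrightarrow{1-C}\Omega^{i}_{X}\to 0$, where $C$ is the Cartier operator. This is classical for smooth $\mathbb{F}_{p}$-schemes, and since all three sheaves commute with filtered limits of smooth algebras it holds for $X$ by Popescu's theorem exactly as the sequence (\ref{bl}) does in Remark \ref{pic}; the analogous sequence holds on $Y$, and since $g\colon A\to B$ commutes with $d$ and with $C$, pushing the $Y$-sequence forward along the closed immersion $Y\hookrightarrow X$ yields a morphism of short exact sequences on $X_{et}$. Passing to long exact cohomology and invoking $\operatorname{H}^{1}(X_{et},Z\Omega^{i}_{X})=0=\operatorname{H}^{1}(Y_{et},Z\Omega^{i}_{Y})$ from Lemma \ref{homV}, one gets
\begin{equation*}
\operatorname{H}^{1}(X_{et},\Omega^{i}_{X,\log})=\Omega^{i}_{A}\big/(1-C)Z\Omega^{i}_{A},\qquad
\operatorname{H}^{1}(Y_{et},\Omega^{i}_{Y,\log})=\Omega^{i}_{B}\big/(1-C)Z\Omega^{i}_{B},
\end{equation*}
with (\ref{pos}) becoming the map induced by $g\colon\Omega^{i}_{A}\to\Omega^{i}_{B}$.

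Surjectivity of (\ref{pos}) is now immediate, since $\Omega^{i}_{A}\to\Omega^{i}_{B}$ is onto. For injectivity I would apply the snake lemma to the map of exact rows $0\to K_{Z}\to Z\Omega^{i}_{A}\to Z\Omega^{i}_{B}\to 0$ and $0\to K_{\Omega}\to\Omega^{i}_{A}\to\Omega^{i}_{B}\to 0$, with all three vertical maps equal to $1-C$; here $K_{\Omega}=\operatorname{Ker}(g\colon\Omega^{i}_{A}\to\Omega^{i}_{B})$ and $K_{Z}=Z\Omega^{i}_{A}\cap K_{\Omega}$. From the split exact sequence (\ref{deRhamex}) of Lemma \ref{dRex} together with $\mathfrak{n}/\mathfrak{n}^{2}\cong B$ one identifies $K_{\Omega}=tA\cdot\Omega^{i}_{A}+dt\wedge\Omega^{i-1}_{A}$. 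Exactness of the top row is the assertion that closed forms on $B$ lift to closed forms on $A$; granting this, the snake lemma reduces the injectivity of (\ref{pos}) to the surjectivity of $1-C\colon K_{Z}\to K_{\Omega}$ (and incidentally makes $\operatorname{H}^{0}(X_{et},\Omega^{i}_{X,\log})\to\operatorname{H}^{0}(Y_{et},\Omega^{i}_{Y,\log})$ onto). Using $(1-C)(d\beta)=-d\beta$ together with $dt\wedge\beta=d(t\beta)-t\,d\beta$ and $t\beta\in K_{\Omega}$ (so that $d(t\beta)\in K_{Z}$ and $(1-C)d(t\beta)=d(t\beta)$), this surjectivity, and likewise the lifting of closed forms, comes down to the single inclusion $tA\cdot\Omega^{i}_{A}\subseteq(1-C)\bigl(Z\Omega^{i}_{A}\cap K_{\Omega}\bigr)$.

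This inclusion is the crux. Given $t\eta$ with $\eta\in\Omega^{i}_{A}$, pick a closed form $\beta_{0}$ with $C\beta_{0}=\eta$ (possible because $C\colon Z\Omega^{i}_{A}\to\Omega^{i}_{A}$ is onto, using $\operatorname{H}^{1}(X_{et},d\Omega^{i-1}_{X})=0$ from Lemma \ref{homV}) and, inductively, closed forms $\beta_{n}$ with $C\beta_{n}=t^{p-1}\beta_{n-1}$, taken via the inverse Cartier operator; since $C^{-1}$ multiplies the order of vanishing along $\{t=0\}$ by $p$, one gets $\sigma_{n}:=t^{p}\beta_{n}\in t^{p^{n+1}}A\cdot\Omega^{i}_{A}$. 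Each $\sigma_{n}$ lies in $Z\Omega^{i}_{A}\cap K_{\Omega}$ (it is closed since $d(t^{p})=0$, and divisible by $t$), one has $C\sigma_{n}=\sigma_{n-1}$ for $n\geq1$ and $C\sigma_{0}=t\eta$, and hence $(1-C)\bigl(\sum_{n=0}^{N}\sigma_{n}\bigr)=\sigma_{N}-t\eta$; letting $N\to\infty$, the series converges $t$-adically and $t\eta=(1-C)\bigl(-\sum_{n\geq 0}\sigma_{n}\bigr)$ with $-\sum_{n\geq 0}\sigma_{n}\in Z\Omega^{i}_{A}\cap K_{\Omega}$. The main obstacle is making this limiting step rigorous over the merely Henselian, non-complete ring $A$: I expect one must first run the argument over the $t$-adic completion $\widehat{A}$, where the partial sums converge in the finite free module of continuous differentials and the estimate for $C^{-1}$ is transparent, and then descend to $A$ by a short diagram chase using injectivity of the restriction maps $\operatorname{H}^{1}(X_{et},\Omega^{i}_{X,\log})\to\operatorname{H}^{1}((\spec\widehat{A})_{et},\Omega^{i}_{\widehat{X},\log})$ and its analogue for $B$.
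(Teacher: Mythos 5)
Your reduction is sound as far as it goes, and it is close in spirit to the paper's diagram (\ref{dR}): you present $\operatorname{H}^{1}$ of the log sheaf as a cokernel (via $1-C$ on closed forms and Lemma \ref{homV}, where the paper uses $1-F$ modulo exact forms), surjectivity of (\ref{pos}) is then immediate, and the snake-lemma reduction of injectivity to the inclusion $tA\cdot\Omega^{i}_{A}\subseteq(1-C)\bigl(Z\Omega^{i}_{A}\cap\operatorname{Ker}(g_{i})\bigr)$ is correct. The genuine gap is exactly where you flag it: the element $-\sum_{n\geq 0}\sigma_{n}$ does not exist in $\Omega^{i}_{A}$, since $A$ is only Henselian and $\Omega^{i}_{A}$ (absolute K\"ahler differentials) is neither $t$-adically complete nor even a module in which such a series has meaning; and the proposed repair rests on two unproven inputs. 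First, injectivity of $\operatorname{H}^{1}(X_{et},\Omega^{i}_{X,\log})\to\operatorname{H}^{1}((\spec\widehat{A})_{et},\Omega^{i}_{\widehat{A},\log})$ is a Henselian-versus-complete comparison for a non-constructible $p$-torsion sheaf; nothing in the paper supplies it, $A$ need not be excellent, and establishing it is of a difficulty comparable to the theorem itself. Second, over $\widehat{A}$ your series converges only in \emph{continuous} differentials, whereas the identification of $\operatorname{H}^{1}$ of the log sheaf via Lemma \ref{homV} and the Cartier sequence is in terms of $\Omega^{i}_{\widehat{A}}$; bridging these is an additional unaddressed step. (A smaller point: you also invoke Lemma \ref{homV} for $Y=\spec B$, but $B=A/(t)$ need not be regular, so that half of the identification needs a separate justification.)

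What the paper does instead closes precisely this gap by a finite, completion-free argument, and you could graft it onto your setup: writing any form as a sum of $a\,\frac{db_{1}}{b_{1}}\wedge\cdots\wedge\frac{db_{i}}{b_{i}}$ (the Kurihara--Fesenko lemma), the operator $1-F$ (equivalently, your $1-C$ after the usual adjustment modulo exact forms) acts on such a generator through the Artin--Schreier map $b\mapsto b-b^{p}$ on the coefficient; so to hit $a\in\mathfrak{n}=(t)$ one only needs a solution of $b^{p}-b=a$ with $b\in(t)$, and Henselianness gives a root $b\notin A^{*}$, which automatically lies in $(t)$ because $b(b^{p-1}-1)=a$ and $b^{p-1}-1$ is a unit. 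Combined with Lemma \ref{dRex}, which identifies the relative term as $(\mathfrak{n}\Omega^{i}_{A}+d\Omega^{i-1}_{A})/d\Omega^{i-1}_{A}$, this yields the needed inclusion on generators with no limiting process; I recommend replacing your $t$-adic iteration by this Artin--Schreier step (which is also where the Henselian hypothesis genuinely enters).
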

\begin{proof}\upshape
We have the following commutative diagram.
\begin{equation}\label{dR}
\begin{CD}
\operatorname{Ker}(g_{i})
@.\to
(
\mathfrak{n}\Omega^{i}_{A}+d\Omega^{i-1}_{A}
)
/d\Omega^{i-1}_{A}
\\
@VVV @VVV\\
\Omega^{i}_{A}
@.\xrightarrow{~~~1-F~~~}
\Omega^{i}_{A}/d\Omega^{i-1}_{A}
@.\xrightarrow{~~~}
\operatorname{H}^{1}\left(
(\spec A)_{et}, \Omega^{i}_{A, \log}
\right)
@.\to
0\\
@V{g_{i}}VV @VVV @VVV \\
\Omega^{i}_{B}
@.\xrightarrow{~~~1-F~~~}
\Omega^{i}_{B}/d\Omega^{i-1}_{B}
@.\xrightarrow{~~~}
\operatorname{H}^{1}\left(
(\spec B)_{et}, \Omega^{i}_{B, \log}
\right)    \\
@VVV \\
0
\end{CD}
\end{equation}
where $F$ is the homomorphism which is induced by the Frobenius operator. 

Then the horizontal arrows in (\ref{dR}) are exact by 
\cite[p.576, Proposition 2.8]{Sh} and Lemma \ref{homV}.
Moreover the vertical arrow in (\ref{dR}) is exact by Lemma \ref{dRex}. 

If the upper homomorphism in (\ref{dR})
is a surjective, we can show that
the homomorphism (\ref{pos}) is an injective
by chasing diagram (\ref{dR}). We have a surjective homomorphism
\begin{align*}
A\otimes(A^{*})^{\otimes i}
&\to
\Omega^{i}_{A}  \\
a\otimes b_{1}\otimes \cdots \otimes b_{i}
&\mapsto
a\frac{db_{1}}{b_{1}}\wedge\cdots\wedge\frac{db_{i}}{b_{i}}
\end{align*}
by \cite[p.34, Lemma]{K-F} and
\begin{equation*}
F\left(
a\frac{db_{1}}{b_{1}}\wedge\cdots\wedge\frac{db_{i}}{b_{i}}
\right)
=
a^{p}\frac{db_{1}}{b_{1}}\wedge\cdots\wedge\frac{db_{i}}{b_{i}}.
\end{equation*}
Since
\begin{equation*}
\mathfrak{n}\Omega^{i}_{A}
\subset
\operatorname{Ker}(g_{i}),
\end{equation*}
it is sufficient to show that for any $a\in \mathfrak{n}$
there exists a $b\in \mathfrak{n}$ such that
\begin{equation}\label{sol}
b^{p}-b=a. 
\end{equation}
By the definition of Henselian,
there exists a $b\in A\setminus A^{*}$ such that $b$ is a solution of the 
equation (\ref{sol}) and 
\begin{math}
b+1,\cdots, b+p-1\in A^{*} 
\end{math}
are also solutions of the equation (\ref{sol}).
Hence $b\in \mathfrak{n}$ by the equation (\ref{sol}). 
The homomorphism (\ref{pos}) is also a surjective
by the diagram (\ref{dR}). Therefore the statement follows.
\end{proof}
\section{A generalization of Artin's theorem}

In this section, we consider a generalization of Artin's theorem 
(\cite[p.98, Theorem (3.1)]{Gr}) as follows.
\begin{prop}\upshape
\label{GBsur}
Let $\mathcal{O}_{K}$ be the Henselization of a discrete valuation ring
which is an essentially of finite type over a field $k$. 
Let 
\begin{math}
\pi: \mathfrak{X}\to \spec \mathcal{O}_{K}
\end{math}
be a smooth and proper morphism and
$Y$ the closed fiber of $\pi$. Suppose that $\mathfrak{X}$ is a connected and 
$\operatorname{dim}(\mathfrak{X})=2$.

Then we have the morphism
\begin{equation}\label{ci}
\operatorname{H}^{n}_{\operatorname{B}}\left(
\mathfrak{X}
\right)
\to
\operatorname{H}^{n}_{\operatorname{B}}\left(
Y
\right)
\end{equation}
and the morphism (\ref{ci}) is a surjective.

\end{prop}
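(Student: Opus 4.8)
The plan is to construct the map (\ref{ci}) as a Gysin-type boundary map and then to reduce its surjectivity to the surjectivity statements already proved in the excerpt, together with the isomorphism of Theorem \ref{hendR}. First I would use the definition $\operatorname{H}^{n}_{\operatorname{B}}(-)=\operatorname{H}^{n+1}_{et}(-,\mathbb{Z}(n-1))_{tor}$ and handle the prime-to-$p$ and $p$-primary parts separately. Away from $p$, by Lemma \ref{mixl}-type comparison (here we are in equal characteristic $p$, so this piece is governed by $\mu_{l}^{\otimes(n-1)}$ with $l$ prime to $p$) the relevant groups are étale cohomology of roots of unity, and the map $\mathfrak{X}\to Y$ is the proper base change / cospecialization map; since $\mathcal{O}_{K}$ is Henselian and $\pi$ is proper and smooth, proper base change gives $\operatorname{H}^{*}_{et}(\mathfrak{X},\mu_{l}^{\otimes\bullet})\xrightarrow{\sim}\operatorname{H}^{*}_{et}(Y,\mu_{l}^{\otimes\bullet})$, so the prime-to-$p$ part of (\ref{ci}) is actually an isomorphism. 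The real content is the $p$-primary part.

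For the $p$-primary part I would use Lemma \ref{LDW}: since $\mathfrak{X}$ is essentially smooth over the excellent regular ring $\mathcal{O}_{K}$ with $\operatorname{char}=p$, we have $\mathbb{Z}/p^{r}(n-1)_{Zar}\simeq\nu^{n-1}_{r}[-(n-1)]$, and likewise on $Y$; hence $\operatorname{H}^{n}_{\operatorname{B}}(\mathfrak{X})_{p^{r}}=\operatorname{H}^{1}(\mathfrak{X}_{et},W_{r}\Omega^{n-1}_{\log})$ and $\operatorname{H}^{n}_{\operatorname{B}}(Y)_{p^{r}}=\operatorname{H}^{1}(Y_{et},W_{r}\Omega^{n-1}_{\log})$. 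The map (\ref{ci}) on $p^{r}$-torsion is then the restriction map $\operatorname{H}^{1}(\mathfrak{X}_{et},W_{r}\Omega^{n-1}_{\log})\to\operatorname{H}^{1}(Y_{et},W_{r}\Omega^{n-1}_{\log})$ induced by the closed immersion $i:Y\hookrightarrow\mathfrak{X}$. Now I would factor this through the local ring $\operatorname{H}^{1}(\mathfrak{X}_{h,Y},-)$ — or more precisely, since $\mathcal{O}_{K}$ is Henselian and $\pi$ is proper, $\mathfrak{X}$ is ``Henselian along $Y$'' and its étale cohomology agrees with that of the formal/Henselian neighborhood. To make the passage to a local ring, I would invoke proper base change again in the form $\operatorname{H}^{1}(\mathfrak{X}_{et},W_{r}\Omega^{n-1}_{\log})=\operatorname{H}^{1}((\mathfrak{X}\times_{\mathcal{O}_{K}}\mathcal{O}_{K}^{h})_{et},\dots)$ (trivial, $\mathcal{O}_{K}$ already Henselian) and then reduce the global $\mathfrak{X}$, $Y$ to the affine local situation by a Mayer--Vietoris / Čech argument over a finite affine cover: surjectivity of a restriction map for a sheaf can be checked after passing to an affine cover refined enough that each piece is Spec of a regular local ring Henselian along the divisor $Y$.

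On each such affine piece, $\mathfrak{X}$ restricts to $\spec A$ with $A$ a Henselian regular local ring over $\mathbb{F}_{p}$ and $Y$ restricts to $\spec(A/(t))$ for a regular element $t$; here Theorem \ref{hendR} applies directly (for $r=1$) and gives $\operatorname{H}^{1}((\spec A)_{et},\Omega^{i}_{A,\log})\xrightarrow{\sim}\operatorname{H}^{1}((\spec A/(t))_{et},\Omega^{i}_{A/(t),\log})$. To go from $r=1$ to general $r$ I would induct using the exact sequences $0\to\Omega^{i}_{\log}\to W_{r}\Omega^{i}_{\log}\to W_{r-1}\Omega^{i}_{\log}\to 0$ (or the analogous ones from \cite{Sh}) together with the five lemma, noting that $\operatorname{H}^{2}(\spec A_{et},W_{r}\Omega^{i}_{\log})=0$ by the argument in Lemma \ref{homV} (cohomological $p$-dimension $1$ of the residue field). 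Then, sheafifying: the map of étale sheaves $W_{r}\Omega^{n-1}_{\mathfrak{X},\log}\to i_{*}W_{r}\Omega^{n-1}_{Y,\log}$ induces an isomorphism on $\operatorname{H}^{1}$ of every Henselian local ring of $\mathfrak{X}$ at a point of $Y$, hence the restriction map on $\operatorname{H}^{1}$ over the Henselian scheme $\mathfrak{X}$ is surjective (it is in fact bijective). Combining with the prime-to-$p$ isomorphism, (\ref{ci}) is surjective.

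\textbf{Main obstacle.} The step I expect to be most delicate is the globalization: transferring the local statement of Theorem \ref{hendR} (about a single Henselian regular local ring and a single regular element) into a surjectivity statement for $\operatorname{H}^{1}$ over the whole surface $\mathfrak{X}$ proper over $\spec\mathcal{O}_{K}$. One must be careful that $\mathfrak{X}$ is genuinely Henselian along $Y$ in the sense needed so that $\operatorname{H}^{1}(\mathfrak{X}_{et},\mathcal{F})$ is computed by the Henselian local rings at points of $Y$ — this uses properness of $\pi$ and the Henselian property of $\mathcal{O}_{K}$ essentially, via the proper base change theorem — and that the divisor $Y$ is locally principal (so that $t$ exists), which holds since $\mathfrak{X}$ is regular and $Y$ is a Cartier divisor (the closed fiber of a map to a DVR). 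The definition of the map (\ref{ci}) itself also requires a small check: that the boundary map $\operatorname{H}^{n+1}_{et}(\mathfrak{X},\mathbb{Z}(n-1))\to\operatorname{H}^{n+1}_{et}(Y,\mathbb{Z}(n-2))$ coming from the Gysin/localization triangle of Proposition \ref{tru2} (with $c=1$, $n\rightsquigarrow n-1$) restricts to torsion subgroups, which is immediate since $\operatorname{H}^{n+1}_{et}(Y,\mathbb{Z}(n-2))$ need not be torsion but the image of the torsion subgroup is torsion, so after composing with the projection to $\operatorname{H}^{n}_{\operatorname{B}}(Y)=\operatorname{H}^{n+1}_{et}(Y,\mathbb{Z}(n-2))_{tor}$ one gets a well-defined map — here one uses Corollary \ref{vm}, $\operatorname{H}^{n+2}_{Zar}(\mathfrak{X},\mathbb{Z}(n-1))=0$, to see the boundary map lands where expected.
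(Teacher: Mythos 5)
Your construction of the map (\ref{ci}) is not the map of the proposition. A Gysin/localization boundary coming from Proposition \ref{tru2} (with $c=1$) necessarily lowers the twist: whatever indexing you use, its target involves $\mathbb{Z}(n-2)$-coefficients on $Y$, whereas $\operatorname{H}^{n}_{\operatorname{B}}(Y)=\operatorname{H}^{n+1}_{et}\left(Y,\mathbb{Z}(n-1)\right)_{tor}$ keeps the twist $n-1$. The map to be constructed is the pullback $i^{*}$ along the closed immersion $i\colon Y\hookrightarrow\mathfrak{X}$ (the direct generalization of Artin's restriction $\operatorname{Br}(\mathfrak{X})\to\operatorname{Br}(Y)$), and for Bloch's cycle complexes such a pullback is not available by naive functoriality (the complexes $z^{n}(-,*)$ are only contravariant for flat maps); the paper obtains it from Levine's results \cite{L2}. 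Your proposal offers no substitute, and in fact your prime-to-$p$ discussion silently switches to the restriction map on $\mu_{l}$-cohomology, which is a different construction from the boundary map you announced. A smaller but real slip in the $p$-part: $\operatorname{H}^{n}_{\operatorname{B}}(\mathfrak{X})_{p^{r}}$ is only a quotient of $\operatorname{H}^{1}(\mathfrak{X}_{et},\nu^{n-1}_{r})$ via the Bockstein, not equal to it; the equality used in Theorem \ref{DWE} rests on the vanishing of $\operatorname{H}^{n}_{et}(-,\mathbb{Z}(n-1))$ for \emph{local} rings, which fails for the proper surface $\mathfrak{X}$. This is repairable for surjectivity purposes, but only once the integral (or at least compatibly defined) map exists.

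The second genuine gap is the globalization of Theorem \ref{hendR}. Your step ``isomorphism on $\operatorname{H}^{1}$ of every Henselian local ring along $Y$, hence surjectivity (even bijectivity) of $\operatorname{H}^{1}(\mathfrak{X}_{et},-)\to\operatorname{H}^{1}(Y_{et},-)$'' is a non sequitur, and the proposed \v{C}ech/Mayer--Vietoris reduction to affine pieces that are spectra of Henselian regular local rings cannot be carried out: members of an affine cover of a surface are not local (let alone Henselian local) schemes, and surjectivity of an $\operatorname{H}^{1}$ restriction map is not checkable stalkwise without controlling a global obstruction. Note also that proper base change (which indeed holds for $p$-torsion sheaves) only yields $\operatorname{H}^{1}(\mathfrak{X}_{et},\nu^{n-1}_{r,\mathfrak{X}})\simeq\operatorname{H}^{1}(Y_{et},i^{*}\nu^{n-1}_{r,\mathfrak{X}})$; the entire difficulty is the passage from $i^{*}$ of the coefficients on $\mathfrak{X}$ to the intrinsic coefficients on $Y$. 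The paper supplies the missing mechanism by working on the Nisnevich site: the sheaf $\mathbf{R}^{n+1}\alpha_{*}\mathbb{Z}/m(n)_{et}$ has Henselian local stalks, the vanishings $\operatorname{H}^{n+2}_{Nis}(\mathfrak{X},\mathbb{Z}(n))=0$ (Corollary \ref{vm}) and $\operatorname{H}^{n+2}_{Nis}(Y,i^{*}\mathbb{Z}/p(n))=0$ (\cite{K-S}) remove the global obstructions, and then proper base change together with Theorem \ref{hendR} gives surjectivity on global sections, which is transported to $\operatorname{H}^{n+2}_{et}(-,\mathbb{Z}(n))_{m}$ through the exact rows (\ref{Gam}) and (\ref{etm}). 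You correctly identified the prime decomposition, the role of proper base change, and Theorem \ref{hendR} as the key $p$-primary input, but without the construction of $i^{*}$ and without a Nisnevich-level (or equivalent) globalization the argument does not close.
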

\begin{proof}
 
The morphism (\ref{ci}) is defined by \cite[Remark 2.9]{L2} and \cite[Theorem 10.2]{L2}. 
Let
\begin{align*}
X_{et}
\xrightarrow{\epsilon}
X_{Zar},
&&
X_{et}
\xrightarrow{\alpha}
X_{Nis}
\xrightarrow{\beta}
X_{Zar}
\end{align*}
be the canonical maps of sites. Since
\begin{equation*}
\mathbb{Z}/m(n)_{Zar}
\xrightarrow{\sim}
\tau_{\leq n}
\left(
\mathbf{R}\epsilon_{*}\mathbb{Z}/m(n)_{et}
\right)
\end{equation*}
by \cite[p.774, Theorem 1.2.2]{Ge} and 
$\beta^{*}\beta_{*}=\operatorname{id}$,
we have a quasi-isomorphism
\begin{equation*}
\mathbb{Z}/m(n)_{Nis} 
\xrightarrow{\sim}
\tau_{\leq n}
\left(
\mathbf{R}\alpha_{*}\mathbb{Z}/m(n)_{et}
\right)
\end{equation*}
for any positive integer $m$.
Hence we have a distiguished triangle
\begin{equation}\label{dist}
\mathbb{Z}/m(n)_{Nis}
\to
\tau_{\leq n+1}
\left(
\mathbf{R}\alpha_{*}\mathbb{Z}/m(n)_{et}
\right)
\to
\mathbf{R}^{n+1}\alpha_{*}\mathbb{Z}/m(n)_{et}[-n-1].
\end{equation}
Since 
\begin{equation*}
\operatorname{H}^{n+2}_{Zar}\left(
\mathfrak{X}, \mathbb{Z}(n)
\right)
=
\operatorname{H}^{n+2}_{Zar}\left(
Y, \mathbb{Z}(n)
\right)
=
0
\end{equation*}
by Corollary \ref{vm} and Vanishing theorem,
we have
\begin{equation}\label{NisV}
\operatorname{H}^{n+2}_{Nis}\left(
\mathfrak{X}, \mathbb{Z}(n)
\right)
=
\operatorname{H}^{n+2}_{Nis}\left(
Y, \mathbb{Z}(n)
\right)
=
0
\end{equation}
by \cite[p.781, Proposition 3.6]{Ge} and
\begin{equation}\label{NismV}
\operatorname{H}^{n+2}_{Nis}\left(
\mathfrak{X}, \mathbb{Z}/m(n)
\right)
=
\operatorname{H}^{n+2}_{Nis}\left(
Y, \mathbb{Z}/m(n)
\right)
=
0
\end{equation}
by the distinguished triangle 
\begin{equation}\label{distm}
\mathbb{Z}(n)_{t}
\xrightarrow{\times m}
\mathbb{Z}(n)_{t}
\to
\mathbb{Z}/m(n)_{t}
\end{equation}
for 
$t\in\{Zar, Nis, et\}$.
Hence we have a commutative diagram
\begin{equation}\label{Gam}
\begin{CD}
0
\to
\operatorname{H}^{n+1}_{Nis}\left(
\mathfrak{X}, \mathbb{Z}/m(n)
\right)
@. \to
\operatorname{H}^{n+1}_{et}\left(
\mathfrak{X}, \mathbb{Z}/m(n)
\right)
@. \to
\Gamma\left(
\mathfrak{X}, \mathbf{R}^{n+1}\alpha_{*}\mathbb{Z}/m(n)_{et}
\right)
\to
0 \\
@VVV @VVV @VVV \\
0
\to
\operatorname{H}^{n+1}_{Nis}\left(
Y, \mathbb{Z}/m(n)
\right)
@. \to
\operatorname{H}^{n+1}_{et}\left(
Y, \mathbb{Z}/m(n)
\right)
@. \to
\Gamma\left(
Y, \mathbf{R}^{n+1}\alpha_{*}\mathbb{Z}/m(n)_{et}
\right)
\to
0 
\end{CD}
\end{equation}
with exact rows by the distinguished triangle (\ref{dist}).

On the other hand, 
\begin{align*}
&\operatorname{Im}\left(
\operatorname{H}^{n+1}_{et}\left(
\mathfrak{X}, \mathbb{Z}(n)
\right)
\to
\operatorname{H}^{n+1}_{et}\left(
\mathfrak{X}, \mathbb{Z}/m(n)
\right)
\right) \\
=&
\operatorname{Coker}\left(
\operatorname{H}^{n+1}_{et}\left(
\mathfrak{X}, \mathbb{Z}(n)
\right)
\to
\operatorname{H}^{n+1}_{et}\left(
\mathfrak{X}, \mathbb{Z}(n)
\right)
\right)\\
=&
\operatorname{H}^{n+1}_{Nis}\left(
\mathfrak{X}, \mathbb{Z}/m(n)
\right)
\end{align*}
and
\begin{align*}
&\operatorname{Im}\left(
\operatorname{H}^{n+1}_{et}\left(
Y, \mathbb{Z}(n)
\right)
\to
\operatorname{H}^{n+1}_{et}\left(
Y, \mathbb{Z}/m(n)
\right)
\right) 
=
\operatorname{H}^{n+1}_{Nis}\left(
Y, \mathbb{Z}/m(n)
\right)
\end{align*}
by the distinguished triangle (\ref{distm}) and (\ref{NisV}).

So we have a commutative diagram
\begin{equation}\label{etm}
\begin{CD}
0
\to
\operatorname{H}^{n+1}_{Nis}\left(
\mathfrak{X}, \mathbb{Z}/m(n)
\right)
@. \to
\operatorname{H}^{n+1}_{et}\left(
\mathfrak{X}, \mathbb{Z}/m(n)
\right)
@. \to
\operatorname{H}^{n+2}_{et}\left(
\mathfrak{X}, \mathbb{Z}(n)
\right)_{m}
\to
0 \\
@VVV @VVV @VVV \\
0
\to
\operatorname{H}^{n+1}_{Nis}\left(
Y, \mathbb{Z}/m(n)
\right)
@. \to
\operatorname{H}^{n+1}_{et}\left(
Y, \mathbb{Z}/m(n)
\right)
@. \to
\operatorname{H}^{n+2}_{et}\left(
Y, \mathbb{Z}(n)_{et}
\right)_{m}
\to
0 
\end{CD}
\end{equation}
with exact rows by the distinguished triangle (\ref{distm}).

Therefore it is sufficient to show that the homomorphism
\begin{equation}\label{Nisi}
\Gamma\left(
\mathfrak{X},\mathbf{R}^{n+1}\alpha_{*}\mathbb{Z}/m(n)_{et}
\right)
\to
\Gamma\left(
Y,\mathbf{R}^{n+1}\alpha_{*}\mathbb{Z}/m(n)_{et}
\right)
\end{equation}
is a surjective by the commutative diagrams (\ref{Gam}) and (\ref{etm}).

Let $i: Y\to \mathfrak{X}$ be the closed immersion of $\mathfrak{X}$. Then the diagram
\begin{equation}\label{Nisdia}
\begin{CD}
\operatorname{H}^{n+1}_{et}\left(
\mathfrak{X}, \mathbb{Z}/m(n)
\right)
@. \to
\Gamma\left(
\mathfrak{X}, \mathbf{R}^{n+1}\alpha_{*}\mathbb{Z}/m(n)_{et}
\right)
\to
0 \\
@VVV @VVV  \\
\operatorname{H}^{n+1}_{et}\left(
Y, i^{*}\mathbb{Z}/m(n)
\right)
@. \to
\Gamma\left(
Y, \mathbf{R}^{n+1}\alpha_{*}i^{*}\mathbb{Z}/m(n)_{et}
\right)
\\
@VVV @VVV  \\
\operatorname{H}^{n+1}_{et}\left(
Y, \mathbb{Z}/m(n)
\right)
@. \to
\Gamma\left(
Y, \mathbf{R}^{n+1}\alpha_{*}\mathbb{Z}/m(n)_{et}
\right)
\to
0 
\end{CD}
\end{equation}
is commutative.

Suppose that $m$ is prime to $\operatorname{char}(\mathfrak{X})$.
Then we have
\begin{equation*}
i^{*}\left(
\mathbb{Z}/m(n)^{\mathfrak{X}}
\right)
=\mathbb{Z}/m(n)^{Y}
\end{equation*}
by the definition of $m$-th roots of unity.
So the homomorphism (\ref{Nisi}) is a surjective by the proper 
base change theorem (\cite[p.224, Corollary 2.7]{M}).


Suppose that $m=\operatorname{char}(\mathfrak{X})$. Then 
\begin{math}
\operatorname{H}^{n+2}_{Nis}\left(
Y, i^{*}\mathbb{Z}/m(n)
\right)
=0
\end{math}
by \cite{K-S}.
So the middle horizontal 
homomorphism in the diagram (\ref{Nisdia}) is a surjective. Therefore the homomorphism
\begin{equation*}
\Gamma\left(
\mathfrak{X},\mathbf{R}^{n+1}\alpha_{*}\mathbb{Z}/m(n)_{et}
\right)
\to
\Gamma\left(
Y,\mathbf{R}^{n+1}\alpha_{*}i^{*}\mathbb{Z}/m(n)_{et}
\right)
\end{equation*}
is a surjective by the proper base change theorem (\cite[p.224, Corollary 2.7]{M}). 
Moreover the homomorphism
\begin{equation*}
\Gamma\left(
Y,\mathbf{R}^{n+1}\alpha_{*}i^{*}\mathbb{Z}/m(n)_{et}
\right)
\to
\Gamma\left(
Y,\mathbf{R}^{n+1}\alpha_{*}\mathbb{Z}/m(n)_{et}
\right)
\end{equation*}
is a surjective by \cite[p.777, The proof of Proposition 2.2.b)]{Ge} 
and Theorem \ref{hendR}. 
Therefore the homomorphism (\ref{Nisi}) is a surjective.
Hence the statement follows.


\end{proof}
\begin{rem}\upshape
Let $\mathcal{O}_{K}$ be an excellent Henselian discrete valuation ring.
Let $\mathfrak{X}$ be a regular proper curve over $\spec \mathcal{O}_{K}$.

Suppose that $\mathcal{O}_{K}$ is an equi-dimensional complete discrete valuation and 
$\mathcal{X}$ is projective over $\spec\mathcal{O}_{K}$. 
Then the local-global map
\begin{equation}\label{lg}
\operatorname{H}^{n}_{B}(k(\mathfrak{X}))_{l}\to \prod_{x\in \mathfrak{X}^{(1)}}
\operatorname{H}^{n}_{B}(k(\tilde{\mathcal{O}}_{\mathfrak{X}, x}))_{l}
\end{equation}
is an injective for $n>1$ and 
$l$ which is prime to $\operatorname{char}(\mathcal{O}_{K})$ by \cite[Theorem 3.3.6]{H-H-K}.

Suppose that $\mathcal{O}_{K}$ is an equi-characteristic and 
$\mathfrak{X}$ is a smooth  over $\spec \mathcal{O}_{K}$. Then the local-global map 
(\ref{lg}) is an injective for $n \geq 1$
and any positive integer $l$ if the morphism (\ref{Nisi}) is an injective.

Suppose that $\mathcal{O}_{K}$ is a mixed-characteristic complete discrete
valuation ring and 
$\mathfrak{X}$ is projective over $\spec \mathcal{O}_{K}$. Then the local-global map
(\ref{lg}) is an injective for $n>1$ and $l$ which is prime to
$\operatorname{char}(\mathcal{O}_{K})$ if 
the first map in the exact sequence (\ref{gp}) is an injective 
by \cite[Remark 3.3.7]{H-H-K}.
\end{rem}

\end{document}